\documentclass[11pt]{amsart}
\allowdisplaybreaks
\usepackage[english]{babel}
\usepackage[pdftex,textwidth=400pt,marginratio=1:1]{geometry}
\usepackage{amsfonts}
\usepackage[dvips]{graphics}
\usepackage[colorinlistoftodos]{todonotes}
\usepackage{amsmath}
\usepackage{combelow} 
\usepackage{amsthm}
\usepackage{amssymb}
\usepackage{bbm}
\usepackage{cancel}
\usepackage{color}
\usepackage[curve]{xypic}
\usepackage{graphicx}
\usepackage{mathabx}
\usepackage{tikz-cd}
\usepackage{tikz}
\usetikzlibrary{decorations.markings}

\usepackage{epigraph} 
\usepackage{hyperref}
\usepackage{stmaryrd}
\usepackage{enumitem}
\usepackage{soul}
\usepackage[mathscr]{eucal}
\usepackage{etoolbox}
\newtheorem{theorem}{Theorem}[section]
\newtheorem{corollary}[theorem]{Corollary}
\newtheorem{proposition}[theorem]{Proposition}
\newtheorem{lemma}[theorem]{Lemma}

\theoremstyle{definition}

\newtheorem{remark}[theorem]{Remark}

\theoremstyle{property}

\DeclareFontFamily{OT1}{rsfs}{}
\DeclareFontShape{OT1}{rsfs}{n}{it}{<-> rsfs10}{}
\DeclareMathAlphabet{\curly}{OT1}{rsfs}{n}{it}

\newcommand\I{\mathcal I}

\renewcommand\L{\mathcal L}

\renewcommand\O{\mathcal O}
\newcommand\PP{\mathbb P}

\newcommand\EE{\mathbb E}

\newcommand\T{\mathbb T}
\newcommand\E{\mathbb E}
\newcommand\C{\mathbb C}

\newcommand\FF{\mathbb F}

\newcommand\sfZ{\mathsf Z}

\newcommand\Q{\mathbb Q}

\newcommand\Z{\mathbb Z}

\newcommand\SU{\mathrm{SU}}

\newcommand\vd{\mathrm{vd}}
\newcommand\pt{\mathrm{pt}}
\newcommand\vir{\mathrm{vir}}

\newcommand\GL{\mathrm{GL}}
\newcommand\bfbeta{{\boldsymbol{\beta}}}
\newcommand\bfn{{\boldsymbol{n}}}
\newcommand\bfa{{\boldsymbol{a}}}

\newcommand\SW{\mathrm{SW}}

\newcommand\rk{\operatorname{rk}}

\newcommand\tr{\operatorname{tr}}

\newcommand\Hom{\operatorname{Hom}}
\renewcommand\hom{\mathcal{H}{\it{om}}}

\newcommand\Spec{\operatorname{Spec}\,}
\newcommand\Hilb{\operatorname{Hilb}}

\newcommand\INTO{\ar@{^{(}->}[r]}

\DeclareRobustCommand{\SkipTocEntry}[4]{}

\def\Gr{\mathbf{Gr}}
\def\Pic{\mathrm{Pic}}
\def\GL{\mathrm{GL}}

\def\TT{\mathbb{T}}

\def\DR{\mathrm{DR}}
\def\bfell{\boldsymbol{\ell}}

\def\and{\quad\mathrm{and}\quad}
\newcommand{\MHM}{\mathrm{MHM}}

\begin{document}

\author[N.~Arbesfeld, M.~Kool and T.~Laarakker]{Noah~Arbesfeld, Martijn~Kool and Ties~Laarakker}

\title[Vafa-Witten invariants from framed sheaves]{Vafa-Witten invariants from wall-crossing for framed sheaves}

\begin{abstract}
We consider the refined $\SU(r)$ Vafa-Witten partition function of a smooth projective surface with non-zero holomorphic 2-form. This partition function has a vertical contribution, expressible in terms of nested Hilbert schemes.

First, we write the vertical contribution in terms of $\chi_y$-genera of moduli spaces of framed sheaves on $\PP^2$. 

Then, we state two wall-crossing identities for moduli spaces of framed sheaves: a blow-up formula due to Kuhn-Leigh-Tanaka and a new stable/co-stable wall-crossing formula. We prove the latter using the theory of mixed Hodge modules.

We apply these identities to obtain constraints on Vafa-Witten invariants predicted by conjectures of G\"ottsche and the second- and third-named authors. For $r=2$, we obtain a proof of the vertical part of a celebrated formula by Vafa-Witten.
\end{abstract}
\maketitle

\section{Introduction}\label{sec:Intro}

In 1994, Vafa-Witten \cite{VW} studied $S$-duality of certain partition functions arising from a topological sector of $N=4$ supersymmetric Yang-Mills theory on a 4-manifold. When the 4-manifold is a complex smooth projective surface $S$, a mathematical definition of the $\SU(r)$ Vafa-Witten partition function was proposed in 2017 by Tanaka-Thomas \cite{TT1}. See also \cite{GSY2} for a discussion from the perspective of reduced Donaldson-Thomas invariants.

Let $(S,H)$ be a smooth polarized surface.
Let $r \in \Z_{>0}$ and let $L$ be a line bundle on $S$. A Higgs pair $(E,\phi)$ on $S$ consists of a torsion free sheaf $E$ on $S$ and a morphism $\phi \colon E \to E \otimes K_S$ satisfying $\tr(\phi) = 0$.
For any $c_2 \in H^4(S,\Z) \cong \Z$, denote by
$$N:=N_S^H(r,L,c_2)$$
the moduli space of rank $r$ $H$-stable Higgs pairs $(E,\phi)$ on $S$ with $\det(E) \cong L$ and $c_2(E) = c_2$. Tanaka-Thomas prove in \cite{TT1} that
$N$ admits a symmetric perfect obstruction theory. Denote the virtual tangent bundle by $T_N^{\vir}$ and its dual by $\Omega_N^{\vir}$. The non-compact moduli space $N$ has a $\C^*$-action scaling the Higgs field.

Assume there are no rank $r$ strictly $H$-semistable Higgs pairs $(E,\phi)$ on $S$ satisfying $\det(E) \cong L$ and $c_2(E) = c_2$. Then the fixed locus $N^{\C^*}$ is compact and can be decomposed
into ``open and closed'' components 
$$
N^{\C^*} = \bigsqcup_{\mu} N^{\C^*}_{\mu},
$$
where $\mu$ runs over all sequences of positive integers $\mu = (\mu_0, \ldots, \mu_{\ell})$, for arbitrary $\ell \geq 0$, satisfying $\mu_0+\cdots+\mu_\ell=r$. The component $N^{\C^*}_\mu$ contains the $\C^*$-fixed Higgs pairs $(E,\phi)$ with weight decomposition
$$
E = \bigoplus_{i=0}^{\ell} E_i \otimes y^{-i}, \quad \rk(E_i) = \mu_i,
$$
where $y$ denotes a primitive character of the $\C^*$-action. 

Consider the sequences $\mu=(r)$ and $\mu = (1,\ldots, 1)=:(1^r)$. The components $N^{\C^*}_{(r)}$ and $N^{\C^*}_{(1^r)}$ are called the \emph{horizontal} and \emph{vertical} component of $N^{\C^*}$ respectively. The horizontal component $N^{\C^*}_{(r)}$ is isomorphic to the Gieseker-Maruyama-Simpson moduli space
$M:=M_S^H(r,L,c_2)$
of rank $r$ $H$-stable torsion free sheaves $E$ on $S$ with $\det(E) \cong L$ and $c_2(E) = c_2$ \cite{HL}.

Denote by $[N]^{\vir} \in H_*^{\C^*}(N)$ the virtual cycle and by $\O_N^{\vir} \in K_0^{\C^*}(N)$ the virtual structure sheaf of $N$, as defined in \cite{BF, CFK}. 
Write $K^{\vir}_{N} = \det(\Omega_N^{\vir})$. When $K^{\vir}_{N}$ admits a square root $(K^{\vir}_{N})^{\frac{1}{2}}$, the twisted virtual structure sheaf is defined by $\widehat{\O}_N^{\vir} = \O_N^{\vir} \otimes (K^{\vir}_{N})^{\frac{1}{2}}$ as in  \cite[Definition 3.1]{NO}. Consider the invariants
\begin{equation} \label{def1}
\int_{[N]^{\vir}} 1, \quad \chi(N,\widehat{\O}^{\vir}_N).
\end{equation}
Since $N$ is non-compact, these expressions are defined by the virtual localization formulae \cite{GP}
\begin{align}  \label{def2}
\int_{[N^{\C^*}]^{\vir}} \frac{1}{e(\nu_N^{\vir})} \in \Q, \quad \chi\Bigg(N^{\C^*},\frac{\O_{N^{\C^*}}^{\vir} \otimes (K^{\vir}_{N})^{\frac{1}{2}}|_{N^{\C^*}}}{\Lambda_{-1} (\nu_N^{\vir})^\vee }\Bigg) \in \Q(y^{\frac{1}{2}}), 
\end{align}
where $\nu_N^{\vir}$ denotes the virtual normal bundle, $e(-)$ the equivariant Euler class, and $y = e^t$ for $t = c_1^{\C^*}(y)$. The first invariants were introduced by Tanaka-Thomas \cite{TT1} and the second (K-theoretic) invariants by Thomas \cite{Tho}. The existence of the square root $(K^{\vir}_{N})^{\frac{1}{2}}|_{N^{\C^*}}$ on the fixed locus appearing in \eqref{def2} is shown by  \cite[Proposition 2.6]{Tho}. 

The $\C^*$-fixed part of $T_N^{\vir}|_{M}$ is equal to the virtual tangent bundle $T_M^{\vir}$ of the natural perfect obstruction theory on $M$ previously studied by Mochizuki \cite{Moc} and, point-wise, is given by
\begin{align*}
T_M^{\vir}|_{[E]} \cong R\Hom_S(E,E)_0[1], 
\end{align*}
where $(-)_0$ denotes the trace-free part. Its rank is given by
$$
\vd := \vd(r,L,c_2) :=  2rc_2 - (r-1)c_1(L)^2 - (r^2-1) \chi(\O_S).
$$
Then the contribution of $M$ to \eqref{def1} equals (up to the sign $(-1)^{\vd} $)
\begin{equation} \label{hor}
e^{\vir}(M), \quad  \widehat{\chi}_{-y}^{\vir}(M) :=  y^{-\tfrac{\vd}{2}} \chi_{-y}^{\vir}(M), 
\end{equation}
where $e^{\vir}(M)$ and $\chi_{y}^{\vir}(M)$ are the virtual Euler characteristic and $\chi_y$-genus defined by Fantechi-G\"ottsche \cite{FG}. The symmetrized virtual $\chi_y$-genus $\widehat{\chi}_{-y}^{\vir}(M)$ is invariant under $y \mapsto 1/y$. If $H K_S < 0$ or $K_S \cong \O_S$, then only the horizontal components contribute and they are smooth of expected dimension. In this case, the virtual invariants \eqref{hor} reduce to (classical) Euler characteristics and Hirzebruch $\chi_y$-genera \cite[Proposition 7.4]{TT1}. 

We are interested in the case where $S$ has a non-zero holomorphic 2-form, i.e., $p_g(S)>0$. Then $M$ is typically singular and the vertical contribution to \eqref{def1} is non-zero.
For fixed $S,H,r,L$ such that $H_1(S,\Z)_{\mathrm{tor}} = 0$,\footnote{In the case where $H_1(S,\Z)$ has torsion, the generating series has to be modified \cite{Wit}.} the $\SU(r)$ Vafa-Witten partition function is defined by\footnote{The factor in front of the sum is needed for good modular behaviour.}
$$
\sfZ_{S,H,L}^{\SU(r)}:= r^{b_1(S)-1} q^{-\frac{\chi(\O_S)}{2r} + \frac{r K_S^2}{24}} \sum_{c_2} (-1)^{\vd(r,L,c_2)} q^{\frac{\vd(r,L,c_2)}{2r}} \chi(N_S^H(r,L,c_2), \widehat{\O}^{\vir}),
$$
where $b_1(S)$ denotes the first Betti number of $S$. The partition function decomposes into contributions from the components $N_\mu^{\C^*}$ as 
\begin{equation} \label{def:genfundeco}
\sfZ_{S,H,L}^{\SU(r)} = r^{b_1(S)-1} \sum_\mu \sfZ_{S,H,L}^{\mu}.
\end{equation}

Suppose $p_g(S)>0$. Picking a non-zero holomorphic 2-form $\theta \in H^0(S,K_S)$, Thomas constructs a cosection of the $\C^*$-fixed obstruction theory on $N^{\C^*}$. He uses the cosection to prove \cite[Thm.~5.23]{Tho}, that, for $r$ prime, only the horizontal and vertical components have nonzero contributions to the partition function. So,
$$
\sfZ_{S,H,L}^{\SU(r)} = r^{b_1(S)-1} \big(\sfZ_{S,H,L}^{(r)} + \sfZ_{S,H,L}^{(1^r)}\big).
$$

For arbitrary $S,H,r,L$, there may be strictly $H$-semistable Higgs pairs and one should replace $N_S^H(r,L,c_2)$ by the moduli space of Joyce-Song Higgs pairs $P_S^H(r,L,c_2)$ \cite{TT2, Tho}. These spaces carry a $\C^*$-action and components of fixed loci are indexed by $\mu$. 
The $\SU(r)$ Vafa-Witten partition function can be defined similarly. It is also of the form \eqref{def1} and, for $r$ prime, again only the horizontal and vertical components contribute \cite{Tho}. The definition of the invariants in the semistable case depends on a conjecture \cite[Conjecture 5.2]{Tho} proved by Liu \cite[Theorem 1.4]{Liu2}.

\subsection{Vertical contribution}

Let $(S,H)$ be a smooth polarized surface satisfying $H_1(S,\Z) = 0$ and $p_g(S)>0$. 
The Higgs pairs in the vertical components decompose into rank 1 eigensheaves. A rank 1 torsion free sheaf on $S$ is of the form $I_Z \otimes L$, where $Z \subset S$ is 0-dimensional subscheme with ideal sheaf $I_Z \subset \O_S$ and $L$ is a line bundle on $S$. We now discuss how the vertical components can be realized as certain nested Hilbert schemes of points and curves on $S$. 

We denote by $\Hilb^n(S)$ the Hilbert scheme of $n$ points on $S$ and, for an algebraic class $\beta \in H^2(S,\Z)$, we write $|\beta|$ for the linear system of effective divisors on $S$ with class $\beta$. 
For any $\bfn = (n_0,\ldots, n_{r-1}) \in \Z_{\geq 0}^r$ and effective classes $\bfbeta = (\beta_1, \ldots, \beta_{r-1}) \in H^2(S,\Z)^{r-1}$, we define 
\begin{align*}
\Hilb^{\bfn}(S) :=  \prod_{i=0}^{r-1} \Hilb^{n_i}(S), \quad |\bfbeta| := \prod_{i=1}^{r-1} |\beta_i|.
\end{align*}
Gholampour-Thomas \cite{GT1, GT2} introduce an incidence subscheme of $\Hilb^{\bfn}(S) \times |\bfbeta|$ (see also \cite{GSY1})
\begin{align*}
\Hilb_{\bfbeta}^{\bfn}(S) = \Big\{(Z_0, \ldots, Z_{r-1},C_1, \ldots, C_{r-1}) \, : \, I_{Z_{i-1}}(-C_i) \subset I_{Z_i} \quad   \textrm{for all } i  \Big\}.
\end{align*}
The union of vertical components
$$
\bigsqcup_{c_2} N_S^H(r,c_1,c_2)_{(1^r)}^{\C^*}
$$ 
is isomorphic to a union of  $\Hilb_{\bfbeta}^{\bfn}(S)$ for certain $\bfn$, $\bfbeta$. In Section \ref{sec:GT}, we give an expression for these $\bfn$, $\bfbeta$, in terms of $r,c_1,c_2$.

Roughly speaking, the scheme $\Hilb_{\bfbeta}^{\bfn}(S)$ can be realized as the degeneracy locus of a morphism of vector bundles on the smooth ambient variety $\Hilb^{\bfn}(S) \times |\bfbeta|$  \cite{GT1,GT2}. Gholampour-Thomas show that this description can be used to endow $\Hilb_{\bfbeta}^{\bfn}(S)$ with a perfect obstruction theory. Crucially, the virtual cycle from the ``degeneracy'' perfect obstruction theory of Gholampour-Thomas coincides with the virtual cycle from the ``$\C^*$-localized'' perfect obstruction theory of Tanaka-Thomas  (as we review in Section \ref{sec:GT}). The former is much easier to calculate.

Using these results, the third-named author derived in \cite{Laa1, Laa2} a universality statement for the vertical part of the $\SU(r)$ Vafa-Witten partition function, which we now describe. For any $a,b \in H^2(S,\Z)$, define
\begin{align} \label{def:delta}
\delta_{a,b} := \left\{\begin{array}{cc} 1 & \mathrm{if \, } a-b \in rH^2(S,\mathbb{Z}) \\ 0 & \mathrm{otherwise.} \end{array} \right.
\end{align}
For $p_g(S)>0$, we denote the Seiberg-Witten invariant of $\beta \in H^2(S,\Z)$ by $\SW(\beta) \in \Z$. For an algebraic class $\beta \in H^2(S,\Z)$, the linear system $|\beta|$ has a perfect obstruction theory and virtual cycle $|\beta|^{\vir}$ in degree $\beta(\beta-K_S) / 2$. If $|\beta|^{\vir} \neq 0$, then $\beta^2 = \beta K_S$ and $\SW(\beta) = \deg(|\beta|^{\vir})$ \cite[Proposition 6.3.1]{Moc} (see also \cite{DKO} in greater generality).\footnote{Equality of algebro- and differential geometric Seiberg-Witten invariants was finalized in \cite{CK}.} We use the following Fourier expansions of the modular form $\Delta$ and the weak Jacobi form $\phi_{-2,1}$
\begin{align*}
\Delta(q) &:= q \prod_{n=1}^{\infty} (1-q^n)^{24}, \\ 
\phi_{-2,1}(q,y) &:= (y^{\frac{1}{2}} - y^{-\frac{1}{2}})^2 \prod_{n=1}^{\infty} \frac{(1- y q^n)^2 (1-y^{-1} q^n)^2}{(1-q^n)^4}.
\end{align*}

\begin{theorem}[Laarakker] \label{thm:Laarakker}
For any $r>1$, there exist universal generating series $A$, $B$, $\{C_{ij}\}_{1 \leq i \leq j \leq r-1}$ with the following property.\footnote{Up to an explicit normalization term, given in Section \ref{sec:GT},  the universal series $A,B,C_{ij}$ lie in $1+ q \, \Q(y^{\frac{1}{2}})[[q]]$. They only depend on $r$.} For any smooth polarized surface $(S,H)$ satisfying $H_1(S,\Z) = 0$, $p_g(S)>0$, and $L \in \Pic(S)$, we have
\begin{align*}
\frac{\sfZ_{S,H,L}^{(1^r)}}{(y^{\frac{1}{2}} - y^{-\frac{1}{2}})^{\chi(\O_S)}}  = A^{\chi(\O_S)} B^{K_S^2} \sum_{\bfa \in H^2(S,\Z)^{r-1}}  \delta_{c_1(L),\sum_i i a_i} \prod_{i} \SW(a_i) \prod_{i \leq j} C_{ij}^{a_i a_j}.
\end{align*}
\end{theorem}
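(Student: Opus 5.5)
We follow the strategy of \cite{GK}/\cite{Laa1,Laa2}: reduce the vertical contribution to integrals over nested Hilbert schemes, then prove universality via localization and a cobordism-type argument. First, using the identification of the vertical components with unions of $\Hilb_{\bfbeta}^{\bfn}(S)$, we rewrite $\sfZ_{S,H,L}^{(1^r)}$ as a sum over $\bfn,\bfbeta$ (with $c_1(L)=\sum_i i\beta_i$ up to the twist relating $\bfbeta$ to the eigensheaf determinants) of $\chi^{\vir}$-type integrals over $[\Hilb_{\bfbeta}^{\bfn}(S)]^{\vir}$. Here we use that the Gholampour--Thomas degeneracy obstruction theory gives the same virtual cycle as the $\C^*$-localized one. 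We also need the K-theoretic integrand (virtual normal bundle and square root of $K^{\vir}_N$) to be expressed in terms of tautological classes pulled back from $\Hilb^{\bfn}(S)\times|\bfbeta|$.

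Second, we push forward to the smooth ambient space. The degeneracy locus description expresses $[\Hilb_{\bfbeta}^{\bfn}(S)]^{\vir}$ as a Thom--Porteous/Euler class of explicit tautological bundles. Next, using $H_1(S,\Z)=0$, $p_g>0$, the linear systems $|\beta_i|$ carry virtual cycles which vanish unless $\beta_i^2=\beta_iK_S$, and then have degree $\SW(\beta_i)$. Pushing the integrand forward along the projection to $|\bfbeta|$, it becomes a tautological integral over $\Hilb^{\bfn}(S)$ against the universal curves $\mathcal C_i\subset S\times|\beta_i|$. Since each $|\beta_i|^{\vir}$ is $0$-dimensional with degree $\SW(\beta_i)$, the Seiberg--Witten classes (with $a_i=\beta_i$) factor out, leaving integrals over $\Hilb^{\bfn}(S)$ of polynomials in Chern classes of tautological bundles built from $\O_S$, $\O(\beta_i)$, $K_S$.

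Third, we apply the universality theorem of Ellingsrud--G\"ottsche--Lehn type for such tautological integrals over products of Hilbert schemes of points. The generating series is multiplicative, of the form $A_1^{\chi(\O_S)}A_2^{K_S^2}\prod_i A_{3,i}^{\beta_iK_S}\prod_{i\le j}A_{ij}^{\beta_i\beta_j}$. We then use $\beta_iK_S=\beta_i^2$ on the SW support to absorb the $\beta_iK_S$ terms into the diagonal $C_{ii}$. Finally we rewrite the constraint $c_1(L)\equiv\sum i a_i$ modulo $r$ through $\delta$, absorbing the sum over the twists by $L$ and the $q$-normalization. The factor $(y^{1/2}-y^{-1/2})^{\chi(\O_S)}$ arises from the $\C^*$-fixed cosection/trivial factor $H^0(K_S)$, $H^2(\O_S)$ in the obstruction theory.

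The main obstacle is the second step. We must check that after pushforward the integrand depends only on $\chi(\O_S)$, $K_S^2$ and the intersection numbers of the $\beta_i$. Because $|\beta_i|$ is not a product-type space, this requires the multiplicativity argument of EGL to be run with the universal divisors included, by degenerating to $S=S_1\sqcup S_2$ and using the toric computation on $\PP^2$ and $\PP^1\times\PP^1$ to span the cobordism ring. I would first verify that all classes are polynomial in $\mathcal O(\mathcal C_i)|$ restricted to points, so the standard induction over $\Hilb^{n}$ via the incidence correspondence applies.
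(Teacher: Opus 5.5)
\textbf{Gap: $H$-stability of the incidence loci is never addressed.} Your outline follows the same route the paper reviews from \cite{GT1,GT2,Laa1}:
\begin{itemize}
\item identify the vertical components with incidence loci $\Hilb^{\bfn}_{\bfbeta}(S)$;
\item use the Gholampour--Thomas formula to trade $[\Hilb^{\bfn}_{\bfbeta}(S)]^{\vir}$ for $\prod_i\SW(\beta_i)$ times an Euler class on $\Hilb^{\bfn}(S)\times\pt\times\cdots\times\pt$;
\item run the EGL/GNY universality argument, absorbing $a_iK_S$ into $a_i^2$.
\end{itemize}
But you skip the step the paper calls crucial. The fixed locus $N^{\C^*}_{(1^r)}$ contains $\Hilb^{\bfn}_{\bfbeta}(S)$ only if \emph{every} point of it is an $H$-stable Higgs pair, and this is an $H$-dependent condition on $(\bfn,\bfbeta)$. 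Your first step therefore gives a sum over an $H$-dependent subset of pairs. By contrast, the EGL/GNY argument needs, for each fixed $\bfa$, the complete series over all $\bfn\in\Z_{\ge 0}^r$, and the theorem sums over all $\bfa$ subject only to the congruence. Nothing in your proposal justifies this $H$-independence.

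The missing input is \cite[Proposition 3.5]{Laa1}: if $\Hilb^{\bfn}_{\bfbeta}(S)$ contains an $H$-unstable point, then $\iota_*[\Hilb^{\bfn}_{\bfbeta}(S)]^{\vir}=0$. Such loci can therefore be added for free, and one may sum over all $(\bfn,\bfbeta)$ satisfying \eqref{Cherneq}. Absent strictly semistables, one way to see this (my sketch, not necessarily the argument of \cite{Laa1}) uses Seiberg--Witten duality:
\begin{itemize}
\item If some $\SW(\beta_i)=0$, the Gholampour--Thomas formula already gives zero.
\item If all $\SW(\beta_i)\neq 0$, each $a_i=K_S-\beta_i$ is effective, so $H\cdot a_i\ge 0$ and the slopes of the eigensheaves $E_i=I_{Z_i}\otimes L_i$ weakly decrease along the chain.
\item A $\C^*$-fixed pair can only be destabilized by a graded $\phi$-invariant subsheaf, and since each $\phi_i$ is injective these are of the form $\bigoplus_{i\ge k}F_i$ with $0\neq F_i\subset E_i$.
\item Hence every point is $\mu_H$-stable unless all $a_i=0$. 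That case forces $c_1\in rH^2(S,\Z)$, where strictly semistable pairs exist, so it is excluded.
\end{itemize}
When strictly semistable Higgs pairs do exist, which the statement allows, you also need the Joyce--Song pair version from \cite{Laa2}.

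\textbf{Two smaller points.} First, the classes in the formula are $a_i=K_S-\beta_i=c_1(L_{i-1}\otimes L_i^*)$, not $a_i=\beta_i$. With $a_i=\beta_i$, the constraint \eqref{Cherneq} reads $c_1\equiv\frac{r(r-1)}{2}K_S-\sum_i i a_i$ modulo $rH^2(S,\Z)$, which is not the stated $\delta_{c_1,\sum_i ia_i}$. You need $\SW(K_S-\beta)=(-1)^{\chi(\O_S)}\SW(\beta)$ to convert $\prod_i\SW(\beta_i)$ into $\prod_i\SW(a_i)$, with the sign absorbed into $A$.

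Second, the ``main obstacle'' you identify is not one. After the Gholampour--Thomas formula everything lives on $\Hilb^{\bfn}(S)\times\pt\times\cdots\times\pt$, so no universal curves survive. What the universality argument does require is defining the series $\mathsf{G}_{S,\bfa}$ for arbitrary, possibly disconnected, $S$ and arbitrary algebraic classes $\bfa$, without assuming $p_g>0$ or $a_i^2=a_iK_S$. Only then are multiplicativity under $S=S'\sqcup S''$ and a spanning set of intersection vectors available; the Seiberg--Witten condition is imposed at the end.
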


\begin{remark}
Note that Theorem \ref{thm:Laarakker} also holds without the assumption $H_1(S,\Z) = 0$, in which case the right-hand side should be divided by the order of the group of $r$-torsion elements $\mathrm{Pic}(S)[r]$ \cite{Laa2}. 
Moreover, the theorem is valid in the presence of strictly $H$-semistable objects \cite{Laa2}. Nonetheless, in this greater generality, the universal functions $A, B, C_{ij}$ are unchanged. So, for the determination of the universal functions, we may always assume that $H_1(S,\Z) = 0$ and that there are no strictly $H$-semistable objects.
We observe that the right-hand side only depends on $c_1 := c_1(L)$, and we therefore write $\sfZ_{S,H,c_1}^{(1^r)}:=\sfZ_{S,H,L}^{(1^r)}$. 
\end{remark}

The universal series $A$ was determined for $y=1$ by Tanaka-Thomas in \cite[Theorem 1.7]{TT2}. The K-theoretic upgrade was found by Thomas \cite[Equation 1.3]{Tho} and Laarakker \cite[Theorem C]{Laa1}
\begin{equation} \label{eqn:A}
A = \frac{(-1)^{r-1}}{\phi_{-2,1}(q^r,y^r)^{\frac{1}{2}} \Delta(q^r)^{\frac{1}{2}}}.
\end{equation}

\subsection{Framed sheaves}\label{sec:intro2}

The departure point of the present paper is the observation that that the universal series appearing in Theorem \ref{thm:Laarakker} can be expressed in terms of integrals over moduli spaces of framed sheaves on $\PP^2$. These moduli spaces are fundamental examples of Nakajima quiver varieties. 
One introduction to their geometry can be found in \cite[Section 3.1]{NY1}. 

Denote by $M_{\PP^2}(r,n)$ the (fine) moduli space parametrizing isomorphism classes of rank $r$ framed sheaves $(E,\Phi)$ on $\PP^2$ satisfying with $c_2(E) = n$. That is to say, $E$ is a rank $r$ torsion free sheaf on $\PP^2$ with $c_2(E) = n$, which is locally free in a neighbourhood of the line $\ell_{\infty} = \{[0:x_1:x_2] : [x_1:x_2] \in \PP^1\}$, and
$$
\Phi \colon E|_{\ell_\infty} \stackrel{}{\rightarrow} \O_{\ell_\infty}^{\oplus r}
$$
is an isomorphism. The existence of the framing $\Phi$ implies that $c_1(E) = 0$. Then $M_{\PP^2}(r,n)$ is a smooth quasi-projective variety of dimension $2rn$. 

The action of $T_1 = (\C^*)^2$ on $\PP^2$ lifts to an action of $M_{\PP^2}(r,n)$. The torus $T_2 = (\C^*)^r$ acts on the framing by scaling the summands of $\O_{\ell_\infty}^{\oplus r}$. We endow $M_{\PP^2}(r,n)$ with a further trivial $T_3 = \C^*$-action.

Let $\TT = T_1 \times T_2 \times T_3$ and denote the corresponding equivariant parameters by
\begin{align*}
T_1: \quad t_1, t_2, \quad T_2: \quad e_0, \ldots,  e_{r-1}, \quad T_3: \quad y.
\end{align*}
Then we consider the (equivariant, symmetrized) $\chi_y$-genus of $M_{\PP^2}(r,n)$
\begin{align*}
\widehat{\chi}_{-y}(M_{\PP^2}(r,n)):=y^{-rn}\chi(M_{\PP^2}(r,n),\Lambda_{-y}\Omega_{M_{\PP^2}(r,n)}) &\in \Q(t_1,t_2,e_0,\ldots, e_{r-1},y).
\end{align*}
This expression is well-defined as a Laurent series by \cite[Lemma 4.2]{NY2} and can also be studied by equivariant localization (Section \ref{sec:framed}). The generating series \begin{align}\label{eq:adjSeries}\sum_n q^n \widehat{\chi}_{-y}(M_{\PP^2}(r,n))\in \Q(t_1,t_2,e_0,\ldots, e_{r-1},y)[[q]]\end{align} appears in the physics literature as (the instanton part of) the K-theoretic (or five-dimensional) $\mathrm{SU}(r)$ Nekrasov partition function with adjoint matter, an example of the Nekrasov partition functions introduced in \cite{Nek}.

We relate the universal functions of Theorem \ref{thm:Laarakker} to generating series of $\chi_y$-genera of $M_{\PP^2}(r,n)$. For any smooth projective surface $S$ satisfying $H_1(S,\Z) = 0$, we consider the following quadratic form
\begin{align} 
\begin{split} \label{def:Q}
Q \colon H^2(S,\Z)^{r-1} \rightarrow \Q, \quad Q(a_1,\ldots,a_{r-1}) = - \sum_{i<j} \frac{i(r-j)}{r} a_ia_j - \sum_{i=1}^{r-1} \frac{i(r-i)}{2r} a_i^2.
\end{split}
\end{align}
For a fixed $\bfa=(a_1,\ldots,a_{r-1}) \in H^2(S,\Z)^{r-1}$, we define line bundles $\{L_i\}_{i=0}^{r-1}$ on $S$ by 
\begin{equation} \label{def:Li}
a_i = c_1(L_{i-1} \otimes L_i^*), \quad i=1, \ldots, r-1.
\end{equation}
Note that this set of equations is underdetermined; the $L_i$ are only determined up to tensoring them by an overall line bundle $L$. However, all of our formulas will only depend on the ``differences'' $L_i^{*} \otimes L_j$.

When $S$ is a smooth projective toric surface with torus $(\C^*)^2 \cong T \subset S$, we denote the collection of maximal $T$-invariant affine open subsets covering $S$ by 
$
\{U_{\alpha} \cong \C^2\}_{\alpha=1}^{e(S)}.
$
Here each index $\alpha = 1, \ldots, e(S)$ corresponds to a 2-dimensional cone of the fan of $S$ \cite{Ful}. Each chart $U_{\alpha}$ has a unique $T$-fixed point, and we denote the coordinates on $U_{\alpha} \cong \C^2$ by $(x_\alpha,y_\alpha)$. Then $T$ acts on the coordinate functions $x_\alpha, y_\alpha$ by characters which we denote by
$$
(w_\alpha)_1 \colon T \to \C^*, \quad (w_\alpha)_2 \colon T \to \C^*.
$$
We write $w_\alpha = ((w_\alpha)_1, (w_\alpha)_2)$, viewed as functions in the coordinates $t_1,t_2$ of $T$. Moreover, the restrictions $L_i|_{U_\alpha}$ correspond to characters, which we denote by
$$
L_i^{(\alpha)} \colon T \to \C^*.
$$
\begin{proposition} \label{prop:main}
For any smooth projective toric surface $S$ and $T$-equivariant algebraic classes $\bfa  = (a_1, \ldots, a_{r-1})\in H^2_T(S,\Z)^{r-1}$ satisfying $a_i^2 = a_i K_S$ for all $i$, we have
\begin{align*}
&A^{\chi(\O_S)} B^{K_S^2} \prod_{i \leq j} C_{ij}^{a_i a_j} = \\
&\quad \quad \quad \quad  \Upsilon_{S,\boldsymbol{a}} q^{-\frac{r\chi(\O_S) }{2} + \frac{r K_S^2}{24} + Q(\boldsymbol{a})} \Bigg( \prod_{\alpha=1}^{e(S)} \sum_{n=0}^{\infty} q^n \widehat{\chi}_{-y}(M_{\PP^2}(r,n)) \Big|_{(w_\alpha,\{L_i^{(\alpha)}y^{-i}\}_i)} \Bigg)\Bigg|_{t_1=t_2=1},
\end{align*}
where the constant $\Upsilon_{S,\boldsymbol{a}}$ is defined in \eqref{def:Upsiloncst}.
\end{proposition}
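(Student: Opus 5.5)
The plan is to compute the vertical contribution on a toric surface by virtual localization on the Gholampour--Thomas nested Hilbert schemes, and then to recognize each local vertex as a $\chi_y$-genus of $M_{\PP^2}(r,n)$.

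\textbf{Step 1: universal series as a toric integral.} The series $A$, $B$, $C_{ij}$ are universal. Laarakker's derivation expresses the left-hand side as a generating series of virtual integrals over the products $\Hilb^{\bfn}(S)\times|\bfbeta|$ with $\beta_i=a_i$. The integrand consists of $\chi_y$-type classes of the Gholampour--Thomas degeneracy obstruction theory, together with a factor coming from the virtual class of $|\bfbeta|$. This expression is given by polynomial (tautological) integrals, so it makes sense equivariantly even for toric $S$, where $p_g=0$. Concretely, I would write the left-hand side as
\[
\sum_{\bfn} q^{(\cdots)} \chi\big(\Hilb^{\bfn}(S), \mathcal{F}_{\bfn,\bfa}\big),
\]
where $\mathcal{F}_{\bfn,\bfa}$ is an explicit K-theory class. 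It is built from the fixed and moving parts of $R\Hom(\oplus_i I_{Z_i}\otimes L_i y^{-i},\,\oplus_j I_{Z_j}\otimes L_j y^{-j})$, using the Tanaka--Thomas description of the $\C^*$-localized obstruction theory. The hypothesis $a_i^2=a_iK_S$ is exactly the condition that the linear system contributes in degree $0$, so that it only gives the constant factor absorbed into $\Upsilon_{S,\bfa}$.

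\textbf{Step 2: torus localization.} Apply $T$-localization on $\Hilb^{\bfn}(S)$. The fixed points are tuples of monomial ideals at each fixed point $p_\alpha$, that is, $r$-tuples of partitions $\{\lambda^{(\alpha)}_i\}$ for each $\alpha$. The integrand factorizes as a product over $\alpha$. The local factor at $U_\alpha\cong\C^2$ is the localization contribution of the $r$-tuple of ideals with characters $L_i^{(\alpha)}y^{-i}$ and tangent weights $w_\alpha$. This is where the main check lies: the vertex term
\[
\Lambda_{-y}\text{-type term of } -\sum_{i,j} \chi(I_i,I_j)\,(L_i^{(\alpha)})^{-1}L_j^{(\alpha)}y^{i-j}\quad(\text{trace-free, moving part included})
\]
must agree with the fixed point contribution to $\widehat{\chi}_{-y}(M_{\PP^2}(r,n))$. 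The latter is $\prod_{w\in T_{\vec\lambda}} (w^{-1/2}-yw^{1/2})/(w^{-1/2}-w^{1/2})$ after symmetrization, and the tangent space is $T_{\vec\lambda}=\sum_{i,j} e_j e_i^{-1} N_{\lambda_i,\lambda_j}(t_1,t_2)$. Identifying $e_i = L_i^{(\alpha)}y^{-i}$ shows the character $\chi(\O,\O)-\chi(I_i,I_j)$ equals the Nekrasov factor $N_{\lambda_i,\lambda_j}$. The Higgs direction contributes the dual shifted by $K_S$ and $y$; together with $T_M$ it reproduces the $\Lambda_{-y}\Omega$ numerator. I expect this to be the main obstacle: matching the twisted square root $(K^{\vir})^{1/2}$ and the trace-free correction with the symmetrization $y^{-rn}$ and the global normalization.

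\textbf{Step 3: global terms.} The non-local (fixed-point independent) terms collect into the prefactor. These are $\chi(L_i^{*}L_j)$ terms from the $\O$-part of $R\Hom$, Riemann--Roch contributions, and the $q$-exponent. Using $\sum_\alpha 1=e(S)$, Noether's formula $12\chi(\O_S)=K_S^2+e(S)$, and the expression of $c_2$ through $n_i$ and $\bfa$, the shift in $q$ becomes $-\tfrac{r\chi(\O_S)}2+\tfrac{rK_S^2}{24}+Q(\bfa)$; the $Q(\bfa)$ term comes from $\vd/2r$ after substituting $c_1=\sum i a_i$. The remaining $y$-dependent constants define $\Upsilon_{S,\bfa}$.

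\textbf{Step 4: non-equivariant limit.} Finally, set $t_1=t_2=1$. Each side is a polynomial in the $q$-coefficients of equivariant integrals over compact spaces, so the limit exists and equals the non-equivariant left-hand side. The total product over $\alpha$ is regular at $t_i=1$ even though individual vertex factors are rational, which finishes the proof.
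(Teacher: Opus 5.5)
Your plan is essentially the paper's proof. The left side equals $q^{-\frac{r\chi(\O_S)}{2}+\frac{rK_S^2}{24}+Q(\bfa)}\Upsilon_{S,\bfa}\mathsf{G}_{S,\bfa}$ by \eqref{eqn:final}, $\mathsf{G}_{S,\bfa}$ factors over the toric charts by Lemma \ref{lem:prod}, and each chart factor is identified with $\sum_n q^n\widehat{\chi}_{-y}(M_{\PP^2}(r,n))$ at $e_i=L_i^{(\alpha)}y^{-i}$ via $T_0^{[\bfn]}=V_1^{[\bfn]}-y(V_1^{[\bfn]})^\vee$. The paper matches classes on the $T_2$-fixed loci $\Hilb^{\bfn}(\C^2)$ rather than at individual fixed points, which amounts to the same thing.

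One correction to Steps 1 and 3, where you misplace the origin of $\Upsilon_{S,\bfa}$ and the $q$-exponent. For toric $S$, no linear system or Seiberg--Witten factor enters (and note $\beta_i=K_S-a_i$, not $a_i$). Step 1 rests instead on the universality formula \eqref{eqn:univdef}, valid for all surfaces and all algebraic $\bfa$, together with the normalizations \eqref{eqn:defCij}. The hypothesis $a_i^2=a_iK_S$ is used only to merge $\overline{E}_i^{a_iK_S}\overline{E}_{ii}^{a_i^2}$ into $\overline{C}_{ii}^{a_i^2}$ and to obtain the closed form \eqref{def:Upsiloncst}. Consequently the $q$-exponent and $\Upsilon_{S,\bfa}$ are built into the definitions of $A,B,C_{ij}$, and the Noether-formula bookkeeping in your Step 3 is unnecessary.
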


The evaluation in the $\alpha$th factor of the product means that we substitute $t_i \mapsto (w_\alpha)_i$ for $i=1,2$ and $e_i \mapsto L_i^{(\alpha)} y^{-i}$ for $i=0, \ldots, r-1$. While each of the $e(S)$ generating series on the right-hand side has poles at $t_1=t_2=1$,  these poles cancel after taking the product over all toric charts, and each term of the product can be regarded as a Laurent polynomial in $t_1,t_2$. The specialization to $t_1=t_2=1$ is therefore well-defined.

\subsection{Wall-crossing for framed sheaves}
To deduce new constraints on Vafa-Witten invariants, we combine Proposition \ref{prop:main} with the following two identities for the series \eqref{eq:adjSeries}, the K-theoretic $\SU(r)$ Nekrasov partition function with adjoint matter.
\subsubsection{A blow-up formula}

The first of the two identities is a recent blow-up formula of Kuhn-Leigh-Tanaka \cite{KLT}, which we now recall.

Consider the lattices $A_{r}$ and $A_r^\vee$ defined as follows. Denoting the standard basis of $\Z^r$ by $(e_1, \ldots, e_r)$, the symmetric bilinear form of $A_{r}$ is determined by
$$
\langle e_i, e_j \rangle := \left\{ \begin{array}{cc} 2 & \textrm{if  } i=j \\ -1 & \textrm{if } |i-j|=1 \\ 0 & \textrm{otherwise.} \end{array}\right.
$$
Denote the matrix corresponding to this bilinear form by $M_{A_r}$. Then, in terms of the standard basis of $\Z^r$, the symmetric bilinear form $\langle - , - \rangle^{\vee}$ of $A_{r}^\vee$ is given by the matrix $M_{A_r^\vee} := M_{A_r}^{-1}$. Denoting the upper half plane by $\mathfrak{H} \subset \C$, the associated lattice theta functions are meromorphic functions on $\mathfrak{H} \times \C$. Their Fourier expansions in $q = \exp(2 \pi \sqrt{-1} \tau)$, $y = \exp(2 \pi \sqrt{-1} z)$ are given by
\begin{align*}
\Theta_{A_{r},\ell}(q,y) &:= \sum_{v \in \mathbb{Z}^r} q^{\frac{1}{2} \langle v - \ell \lambda, v - \ell \lambda \rangle} y^{\langle v - \ell \lambda, M_{A_{r}}^{-1} (1, \ldots, 1) \rangle}, \quad  \lambda := \frac{1}{r+1}(r,r-1, \ldots, 1) \\
\Theta_{A_{r}^\vee,\ell}(q,y) &:= \sum_{v \in \mathbb{Z}^r} q^{\frac{1}{2} \langle v, v \rangle^\vee} e^{2 \pi \sqrt{-1} \langle v, \ell \cdot (1,0, \ldots, 0) \rangle^\vee} y^{\langle v, (1, \ldots, 1) \rangle^\vee}. 
\end{align*}
Furthermore, we consider the Dedekind eta function and its normalized version
\begin{align*}
\eta(q) := q^{\frac{1}{24}} \prod_{n=1}^{\infty}(1-q^n), \quad \overline{\eta}(q) := \prod_{n=1}^{\infty}(1-q^n).
\end{align*}

Let $\pi \colon \widehat{\PP}^2 \to \PP^2$ denote the blow-up at the point $[1:0:0]$ and  $C$ denote the exceptional divisor. Note that $[1:0:0]$ does not lie on the line $\ell_\infty \subset \PP^2$. Let $M_{\widehat{\PP}^2}(r,\ell,n)$ be the moduli space parametrizing isomorphism classes of framed sheaves $(E, \Phi)$ on $\widehat{\PP}^2$ where $E$ is a rank $r$ torsion free sheaf on $\widehat{\PP}^2$, locally free in a neighborhood of $\pi^{-1}(\ell_\infty) =: \ell_\infty$, with $c_1(E) = \ell C$, $c_2(E) = n-\frac{r-1}{2r}\ell^2$, and 
$$
\Phi \colon E|_{\ell_\infty} \stackrel{\cong}{\rightarrow} \O_{\ell_\infty}^{\oplus r}.
$$
Then $M_{\widehat{\PP}^2}(r,\ell,n)$ is a smooth quasi-projective variety of dimension 
$2rn.$

The following theorem, \cite[Theorem 1.4]{KLT}, 
is proved using the blow-up algorithm \cite[Figure 1]{NY3} whose key input is the wall-crossing formula \cite[Theorem 1.5]{NY3}.
\begin{theorem}[Kuhn-Leigh-Tanaka] \label{thm:KLT}
For any $\ell \in \Z$, we have
$$
\sum_{n} \widehat{\chi}_{-y}(M_{\widehat{\PP}^2}(r,\ell,n)) q^{n } =  \frac{\Theta_{A_{r-1},\ell}}{\overline{\eta}^r} \sum_{n} \widehat{\chi}_{-y}(M_{\PP^2}(r,n)) q^n.
$$
\end{theorem}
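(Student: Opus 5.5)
The plan is to follow the Nakajima--Yoshioka blow-up algorithm, as Kuhn--Leigh--Tanaka do. The input is the wall-crossing formula \cite[Theorem 1.5]{NY3}, which compares moduli of framed sheaves on $\widehat{\PP}^2$ as the first Chern class changes by the exceptional curve.

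We introduce the intermediate moduli spaces $M^m(r,\ell,n)$ of $m$-stable framed sheaves on $\widehat{\PP}^2$, i.e.\ those with $\Hom(E,\O_C(-m-1))=0$ and $\Hom(\O_C(-m),E)=0$. These spaces have the following properties.
\begin{enumerate}
\item For $m \gg 0$ (depending on $n$), $M^m(r,\ell,n)= M_{\widehat{\PP}^2}(r,\ell,n)$.
\item For $m=0$ and $0\le \ell<r$, $M^0(r,\ell,n)$ is described by the quiver of \cite{NY1}.
\item The case $\ell=0$, $m=0$ is isomorphic to $M_{\PP^2}(r,n)$, via $\pi_*$. More generally, there is a relation to $\PP^2$ via pushforward after twisting by $\O(kC)$, which shifts $\ell$ by $r$.
\end{enumerate}
Since $\widehat{\chi}_{-y}$ is a torus-equivariant integral of the characteristic class $\Lambda_{-y}\Omega$ over a smooth space, it is well-defined by equivariant localization with compact fixed loci. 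Across each wall $m \to m+1$, \cite[Theorem 1.5]{NY3} expresses the difference of the equivariant integrals as an iterated residue in the equivariant parameter of the Grassmannian bundle along which the flip occurs. The integrand is the $\chi_y$-class of the normal data.

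Step one is to compute the wall-crossing term for the class $\Lambda_{-y}\Omega$. In the $\chi_y$ setting with the trivial $T_3$ weight $y$, the normal bundle pieces pair into factors of the form $\frac{1-y e^{w}}{1-e^{w}}$. We expect the residue to simplify to a monomial factor in $q$ and $y$ times the lower generating series. Concretely, each wall-crossing shifts $n$ by an amount depending on $(\ell, m)$ and multiplies by a $y$-power determined by a vector $v\in\Z^r$ recording how many copies of $\O_C(-m-1)$ split off in each framing direction.

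Step two is to sum over all walls $m\ge 0$. This yields a sum over $v\in\Z^r$ (subject to $\sum v_i$ fixed by $\ell$) of $q^{\frac12\langle v-\ell\lambda,v-\ell\lambda\rangle}y^{(\cdots)}$, which is exactly $\Theta_{A_{r-1},\ell}$.

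The factor $\overline{\eta}^{-r}$ arises from the contribution of sheaves supported near $C$. Equivalently, it comes from comparing $\widehat{\chi}_{-y}$ of $\widehat{\PP}^2$ with that of $\PP^2$ at the level of fixed-point contributions: the blow-up replaces one chart by two. Under the $\chi_y$ specialization, the ratio of Nekrasov factors at the two new charts against the old chart collapses to $\prod_n (1-q^n)^{-r}$, because the rigidity of $\chi_y$ kills the $t_1,t_2$ dependence.

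The main obstacle is step one. We must show that the $\chi_y$-genus integrand has no residual dependence on $t_1,t_2,e_i$ after the residue is taken, so that the wall-crossing terms are pure theta-function coefficients. I would first try rigidity: the generating function is a Laurent polynomial in the torus variables with bounded behavior at $0$ and $\infty$, hence constant, as in \cite[Lemma 4.2]{NY2}. That constant is then evaluated by the limit $t_1\to 0$.
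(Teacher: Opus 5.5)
Your outline follows the route the paper cites. The paper does not reprove this statement; it quotes \cite[Theorem 1.4]{KLT}, which is obtained from the Nakajima--Yoshioka blow-up algorithm with \cite[Theorem 1.5]{NY3} as input. Your items (1)--(3) on the $m$-stable spaces are fine. However, the part that actually produces $\Theta_{A_{r-1},\ell}/\overline{\eta}^r$ is either asserted or misdescribed, so there is a genuine gap.

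First, the wall structure is not as you describe it. Between $m$- and $(m+1)$-stability the destabilizing objects are $\O_C(-m-1)^{\oplus j}$, indexed by a single integer $j\geq 1$. Since $C\cap\ell_\infty=\emptyset$, these objects carry no framing, so no vector $v\in\Z^r$ of ``framing directions'' is attached to a wall. The $j$-th wall-crossing term is a residue integral over a space of $m$-stable objects whose first Chern class differs from $\ell[C]$ by $j[C]$. So the recursion couples different values of $\ell$. It closes only after you also use $E\mapsto E(\pm C)$, which shifts $m$ by one and $\ell$ by $r$, to return to the $m=0$, $\ell=0$ space $\cong M_{\PP^2}(r,n)$. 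You would need to prove two things:
(a) for the class $\Lambda_{-y}\Omega$, each residue term equals an explicit function of $y$ alone times $\widehat{\chi}_{-y}$ of the lower space, for instance by evaluating the residues in the auxiliary variable at $0$ and $\infty$;
(b) the resulting recursion over all walls and all $\ell$ is solved by $\Theta_{A_{r-1},\ell}/\overline{\eta}^r$.
For (a), note that \cite[Lemma 4.2]{NY2} is only a finiteness statement for weight spaces and gives no rigidity. Also, $\sum_n q^n\widehat{\chi}_{-y}(M_{\PP^2}(r,n))$ is not constant in $t_1,t_2,e_i$; already $\widehat{\chi}_{-y}(\C^2)$ is not. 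Your Step two simply declares (b).

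Second, your explanation of $\overline{\eta}^{-r}$ is wrong for $r\geq 2$. It is correct for $r=1$, where it amounts to rigidity of $\chi_{-y}(\widehat{\C}^2)-\chi_{-y}(\C^2)$ plus the plethystic formula for Hilbert schemes. In general, \eqref{eq:chiyblloc} and further $T$-localization write $\sum_n q^n\widehat{\chi}_{-y}(M_{\widehat{\PP}^2}(r,\ell,n))$ as a sum over $\bfell$ of $q^{D_{\bfell}}\Upsilon^{\circ}_{\bfell}$ times a product of two Nekrasov-type factors for the charts $U_1,U_2$ of $\widehat{\C}^2$, with the $e_i$ shifted according to $\bfell$. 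The individual terms depend on $t_1,t_2,e_i$; only the total is parameter-free. The $\bfell=0$ term already contradicts your claim. Take $r=2$, substitute $q=y^2Q$ and set $y=0$. The $Q^1$-coefficient of (chart-1 factor)(chart-2 factor)/(original factor) is then $\chi(\O)$ of $M_{\PP^2}(2,1)\cong\C^2\times T^*\PP^1$ at the two chart weights minus its value at $(t_1,t_2)$. This is a nonzero rational function, whereas $\overline{\eta}(y^2Q)^{-2}=1$ at $y=0$. Finally, a complete wall-crossing recursion from $m\gg0$ down to $M_{\PP^2}(r,n)$ would by itself have to produce the entire factor $\Theta_{A_{r-1},\ell}/\overline{\eta}^r$. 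So your two mechanisms (theta from the walls, eta from the charts) cannot both be right.
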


\subsubsection{Stable/co-stable wall-crossing}

The second of the two identities for \eqref{eq:adjSeries} is new. Namely, we establish the following symmetry relation
\begin{align} \label{eqn:framingsymmintro}
\widehat{\chi}_{-y}(M_{\PP^2}(r,n)) \Big|_{(t_1,t_2,e_0^{-1},\ldots, e_{r-1}^{-1})} = \widehat{\chi}_{-y}(M_{\PP^2}(r,n)). \end{align}

The identity \eqref{eqn:framingsymmintro} admits a geometric interpretation. The moduli space $M_{\PP^2}(r,n)$ is a Nakajima quiver variety -- in this case the ADHM moduli space $M(r,n)$ of stable representations of a quiver with relations (Section \ref{sec:quiver}). 
Replacing the stability condition by its opposite, one obtains the moduli space $M^c(r,n)$ of \emph{co-stable} representations of the same quiver with relations. The invariance \eqref{eqn:framingsymmintro} can be rephrased as 
the following assertion that the stable/co-stable wall-crossing is trivial.
\begin{theorem}\label{thm:stcostintro} 
There is an equality \begin{align}\label{stcostintro} \chi(M(r,n),\Omega^k_{M(r,n)} ) =\chi(M^{c}(r,n), \Omega^k_{M^{c}(r,n)})\in \Q(t_1,t_2,e_0,\ldots,e_{r-1})
\end{align} of  $\T$-equivariant Euler characteristics. 
\end{theorem}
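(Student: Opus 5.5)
The plan is to compare both sides of \eqref{stcostintro} on the common affine quotient. Let $M_0(r,n)=\mu^{-1}(0)/\!\!/\GL_n$ be the affine ADHM quotient, i.e.\ the moduli of ideal (semisimple) framed sheaves. There are projective $\T$-equivariant morphisms
$$\pi\colon M(r,n)\to M_0(r,n), \qquad \pi^c\colon M^c(r,n)\to M_0(r,n),$$
coming from variation of GIT. Both are symplectic resolutions, hence semismall.

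\textbf{Step 1: reduce to Hodge modules on $M_0(r,n)$.} By Saito's theory, $R\pi_*\Omega^k_{M(r,n)}[\dim M - k]$ is the $k$-th graded piece $\mathrm{Gr}^F_{-k}\DR(\pi_*\Q^H_{M(r,n)}[\dim M])$. This is a $\T$-equivariant coherent complex on $M_0(r,n)$, and the same holds for $\pi^c$. The equivariant Euler characteristic $\chi(M(r,n),\Omega^k)$ is then the $\T$-character of $R\Gamma(M_0(r,n), R\pi_*\Omega^k)$. This character is well defined as a rational function, because $t_1,t_2$ contract $M_0(r,n)$ to the single fixed point $0$, so each weight space is finite-dimensional. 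It therefore suffices to construct an isomorphism of $\T$-equivariant (pure, polarizable) Hodge modules
$$\pi_*\Q^H_{M(r,n)}[2rn] \;\cong\; \pi^c_*\Q^H_{M^c(r,n)}[2rn].$$
Since $\T$ is connected, an isomorphism of the underlying Hodge modules that is compatible with the weak equivariant structure should suffice.

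\textbf{Step 2: the decomposition theorem.} I would stratify $M_0(r,n)=\bigsqcup_{k} M_0^{\mathrm{reg}}(r,n-k)\times S^{k}_{\nu}\C^2$, refined by the partition type $\nu$ of the point configuration. Because both maps are semismall, both pushforwards are direct sums of $\mathrm{IC}$ Hodge modules of strata closures. The coefficients are local systems given by the top-degree cohomology $H^{2d_\nu}(F_x)(d_\nu)$ of the fibre $F_x$ over a point $x$ of the stratum, and these are of Tate type, generated by fundamental classes of top-dimensional components. For $\pi$ the fibres are products of punctual Quot-type schemes $\mathrm{Quot}(\O^{r},k)_0$ (equivalently, for the stable side, the central fibre of $M(r,k)\to M_0(r,k)$). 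For $\pi^c$ I would use the transpose isomorphism $(B_1,B_2,i,j)\mapsto(B_1^t,B_2^t,j^t,i^t)$, which identifies $M^c(r,n)\cong M(r,n)$ compatibly with the map to $M_0(r,n)\cong M_0(r,n)$ (up to twisting the torus action by $e_i\mapsto e_i^{-1}$). Under this identification the fibres of $\pi^c$ over $x$ match fibres of $\pi$ over the dual point. The upshot is that over each stratum the two fibres have the same number of top-dimensional components, with the same monodromy permutation coming from the $S_\nu$-symmetry of the points.

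\textbf{Step 3: match summands and conclude.} Step 2 gives that the same $\mathrm{IC}(\overline{\text{stratum}}, \mathcal L_\nu)(-d_\nu)$ appear with equal multiplicity on both sides, so the two Hodge modules are isomorphic. Applying $\mathrm{Gr}^F\DR$ and taking $\T$-characters of global sections then yields \eqref{stcostintro}.

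The main obstacle is Step 2: verifying that the local systems and Tate twists coincide on every stratum. This requires knowing that the number of irreducible top-dimensional components of the punctual fibres is the same for the stable and co-stable sides, and that the monodromy actions agree. I would first check this for $n$ small and then argue via the product structure $x\leftrightarrow$ (regular part, punctual parts). In the Nakajima framework, each irreducible summand is expected to correspond to an irreducible representation (here from the Heisenberg/$\mathfrak{gl}_r$ side). A fallback is to compare both pushforwards via their stalk Euler characteristics at each stratum, computed by torus localization, which equal counts of $r$-tuples of partitions on both sides; for semismall maps with Tate-type coefficients, this count determines the multiplicities.
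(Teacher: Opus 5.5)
Your overall frame matches the paper's. You push the constant Hodge module forward to the affine quotient, use that $\mathrm{Gr}^F_{-k}\DR$ commutes with proper pushforward and recovers $\Omega^k$, and reduce to an isomorphism $\pi_*\Q^H_{M}\cong\pi^c_*\Q^H_{M^c}$. That isomorphism is then made $\T$-equivariant: by Achar's full faithfulness of the forgetful functor for connected $\T$, any non-equivariant isomorphism lifts, so no compatibility hypothesis is needed.

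The difference lies in how you get that key isomorphism. The paper does no fibre analysis. It imports Davison--Hennecart--Schlegel Mejia's Proposition 10.3, which identifies both pushforwards with $\mathcal{BPS}_{\widehat{A}_0}\otimes\mathbb{L}^{rn}$, and then checks that the perverse-sheaf isomorphisms there lift to equivariant mixed Hodge modules and hold for every generic stability condition. You use instead the decomposition theorem for semismall maps together with the transpose symmetry.

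Your route works, and the step you flag as the main obstacle closes immediately.
\begin{itemize}
\item The fibre of $\pi$ over a point of type $(k,\nu)$ is $\prod_i \mathrm{Quot}(\O^r,\nu_i)_{x_i}$. Punctual Quot schemes on a surface are irreducible of dimension $r\nu_i-1$ (Ellingsrud--Lehn, Baranovsky).
\item Hence each fibre is irreducible of dimension $rk-\ell(\nu)$, exactly half the codimension of the stratum $M_0^{\mathrm{reg}}(r,n-k)\times S^k_\nu\C^2$. So every stratum is relevant.
\item Every coefficient local system is the trivial rank-one Tate one, since monodromy permuting equal factors fixes the fundamental class. This gives a rank-$r$ G\"ottsche--Soergel decomposition.
\item On the co-stable side, $\pi^c\circ\sigma=\sigma_0\circ\pi$. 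The map $\sigma_0$ preserves each stratum, because transposition sends a semisimple point $R\oplus\bigoplus_i S_{x_i}^{\oplus\nu_i}$ to $\sigma(R)\oplus\bigoplus_i S_{x_i}^{\oplus\nu_i}$. So $(\sigma_0)_*$ fixes every summand $IC(\overline{S_{k,\nu}})$, and the Tate twists agree automatically.
\end{itemize}
No component counting or monodromy computation is needed. Your Euler-characteristic fallback would not have sufficed on its own anyway, since stalk ranks do not determine local systems.

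One correction: the map $(B_1^t,B_2^t,j^t,i^t)$ sends $[B_1,B_2]+ij$ to $-2[B_1,B_2]^t$, so it does not preserve the ADHM equation. You need a sign, e.g.\ $(B_1^t,B_2^t,-j^t,i^t)$ as in the paper.

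As for what each route buys: yours is elementary and self-contained for the Jordan quiver, and it avoids the BPS-sheaf machinery and the lifting ``fine print''. The paper's argument applies unchanged to any two generic chambers of an arbitrary Nakajima quiver variety, as in its remark on general Nakajima varieties. There, fibres can be reducible with nontrivial monodromy, and there need be no symmetry like $\sigma$ exchanging the chambers.
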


The cohomological specialization of \eqref{stcostintro} was previously derived by Ohkawa \cite{O} using Mochizuki style wall-crossing. Our proof of Theorem \ref{thm:stcostintro} uses equivariant mixed Hodge modules and a property of the BPS sheaf $\mathcal{BPS}_{\widehat{A}_0}$ established in \cite{DHSM}. The use of mixed Hodge modules to study Hodge polynomials of Hilbert schemes dates back to \cite{GS}. More recent applications can be found in \cite{Fu}.

 Theorem \ref{thm:stcostintro} in fact holds more generally, for any pair of smooth Nakajima quiver varieties related by variation of GIT stability (see Remark \ref{rmk:generalNak}). Identities of the form \eqref{stcostintro} may be of interest beyond their applications to Vafa-Witten invariants. In particular, applying K-theoretic equivariant localization \cite[Theorem 3.5]{Thom} to both sides of \eqref{stcostintro} 
 yields new non-trivial combinatorial identities. 

As Nakajima varieties are holomorphic symplectic, Theorem \ref{thm:stcostintro} can be regarded as a non-compact and equivariant analog of results of Batyrev \cite{Bat} for $K$-trivial smooth projective varieties and Huybrechts \cite{Huy} for compact hyperk\"{a}hler manifolds.

\subsection{Relations among the universal series}

We now return to the vertical branch and the universal functions $B, C_{ij}$ in Theorem \ref{thm:Laarakker}. We first introduce some notation. For any subset $I \subset [r-1]:=\{1, \ldots, r-1\}$, define\footnote{Note that the definition of $C_{I}$ slightly differs from \cite{GKL}, where $C_{I} = C_0 \prod_{i \leq j \in I} C_{ij}$. Hence $C_0$ and $B$ differ by a factor of $\Theta_{A_{r-1},0}/ \eta^r$.}
\begin{align} \label{def:CI} 
C_{I} := B \prod_{i \leq j \in I} C_{ij}.
\end{align} 
In particular, $C_{\varnothing} = B$. We observe that the universal functions $C_I$ determine the universal functions $B, C_{ij}$ (and vice versa). Furthermore, we define
$$
|I| := \sum_{i \in I} 1, \quad |\!|I|\!| := \sum_{i \in I} i, \quad \epsilon_r := e^{2 \pi \sqrt{-1}/r}.
$$
Our main result is a proof of \cite[Conjecture 3.6]{GKL}, which provides a system of \emph{symmetry} and \emph{blow-up} formulas for the vertical universal functions. 
\begin{theorem} \label{thm:main}
For any $r, \ell$, and $i \leq j$, we have  
\begin{align*}
C_{ij} = C_{r-j,r-i}, \quad \sum_{I \subset [r-1]} \epsilon_r^{\ell |\!|I|\!|} C_I^{-1} = \frac{\Theta_{A_{r-1}^{\vee},\ell}}{\eta^r}.
\end{align*}
\end{theorem}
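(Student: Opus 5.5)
The plan is to derive both identities from Proposition \ref{prop:main} by evaluating it on well-chosen toric surfaces and equivariant classes. We then transport the two wall-crossing identities, Theorem \ref{thm:KLT} and the framing symmetry \eqref{eqn:framingsymmintro} (equivalently Theorem \ref{thm:stcostintro}), through the product over toric charts.

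\textbf{Symmetry $C_{ij}=C_{r-j,r-i}$.} Given $\bfa=(a_1,\ldots,a_{r-1})$, consider the reversed tuple $\bfa'=(a_{r-1},\ldots,a_1)$. The corresponding line bundles from \eqref{def:Li} are $L'_i=L_{r-1-i}^{*}$, up to an overall twist. Under this substitution the arguments $e_i\mapsto L_i^{(\alpha)}y^{-i}$ become $(L_{r-1-i}^{(\alpha)})^{-1}y^{-i}$. Up to an overall scalar $y^{-(r-1)}$ and a reordering of the $e_i$, these are exactly the inverses of the original arguments. The $\chi_y$-genus is invariant under permuting the $e_i$ by Weyl symmetry, and under simultaneous rescaling of all $e_i$, since only ratios act on $M_{\PP^2}(r,n)$. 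Combined with \eqref{eqn:framingsymmintro}, this shows that each chart factor is unchanged. It remains to check that $Q(\bfa')=Q(\bfa)$ (immediate from \eqref{def:Q}) and that $\Upsilon_{S,\bfa'}=\Upsilon_{S,\bfa}$, using its definition \eqref{def:Upsiloncst}. Proposition \ref{prop:main} then gives
\[
\prod_{i\le j}C_{ij}^{a_ia_j}=\prod_{i\le j}C_{r-j,r-i}^{a_ia_j}
\]
for all toric $(S,\bfa)$ with $a_i^2=a_iK_S$. To isolate each $C_{ij}$, choose for example $S=\widehat{\PP}^2$ or surfaces of Picard rank at least two, with $a_i\in\{0,C,K_S-C,\ldots\}$ satisfying $a_i^2=a_iK_S$ and producing a nondegenerate system of intersection numbers $a_ia_j$. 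We may also use equivariant classes, since Proposition \ref{prop:main} allows $H^2_T$; exponents then vary continuously in the equivariant parameters, which helps separate them.

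\textbf{Blow-up identity.} Let $S$ be a toric surface and $\widehat S$ its blow-up at a fixed point, with exceptional curve $C$, so that $C^2=CK_{\widehat S}=-1$. The fixed-point chart of $S$ is replaced by two charts of $\widehat S$. Theorem \ref{thm:KLT}, rewritten via localization on $\widehat{\PP}^2$ as a two-chart product with the twist $\ell C$ distributed among the $e_i$, says that the two-chart product equals $\Theta_{A_{r-1},\ell}/\overline\eta^r$ times the single-chart series. To apply this, we take classes $\pi^*\bfa+(\epsilon_1C,\ldots,\epsilon_{r-1}C)$ with $\epsilon_i\in\{0,1\}$, indexed by $I=\{i:\epsilon_i=1\}$; the condition $a_i^2=a_iK$ is preserved. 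Via Proposition \ref{prop:main}, the left side changes by $B\prod_{i\le j\in I}C_{ij}^{-1}$, giving $C_I^{-1}$ up to normalization, since $K^2$ drops by one and $(a_i+C)(a_j+C)=a_ia_j-1$. The framing characters on the exceptional charts encode $c_1=\sum_i i\epsilon_iC$, so $\ell$ corresponds to $|\!|I|\!|$ modulo $r$. Summing over $I$ with weights $\epsilon_r^{\ell|\!|I|\!|}$ and applying the discrete Fourier transform in $\ell\bmod r$ converts $\Theta_{A_{r-1},\ell}$ into $\Theta_{A^\vee_{r-1},\ell}$. The factors $q^{Q}$ and $\Upsilon$ account for the $\eta$ versus $\overline\eta$ normalization.

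\textbf{Expected obstacle.} The hardest step is the bookkeeping in the blow-up step. One must match precisely the equivariant weights of $L_i$ on the two new charts with the framing parameters in \cite{KLT}'s formula, verify that $\Upsilon$ and $Q$ produce exactly $q^{-r/24}$-type factors, and carry out the finite Fourier transform relating $\Theta_{A_{r-1},\ell}$ and $\Theta_{A_{r-1}^\vee,\ell}$. A further subtlety is that Theorem \ref{thm:KLT} concerns the blow-up of $\PP^2$ with a specific $c_1=\ell C$, whereas we need a local statement at one chart. I would first try to justify this by localization, writing $M_{\widehat{\PP}^2}$ as a product over its fixed points, or by recognizing that the KLT proof is chart-local.
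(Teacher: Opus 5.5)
Your symmetry argument is essentially the paper's. The reversal $L_i'=L_{r-1-i}^*$, followed by Theorem \ref{thm:symmNek}(1), (2) and invariance under overall rescaling of the $e_i$, is exactly the chain of equalities used there. The paper applies it through Remark \ref{rem:propmainstrong} to arbitrary classes, which gives $\overline{E}_{r-i}=\overline{E}_i$ and $\overline{E}_{r-j,r-i}=\overline{E}_{ij}$ directly, with no need to check $Q$ and $\Upsilon$. Your constrained version also works. Isolating each $C_{ij}$ is easy: take $a_k=E$ for $k\in I$ and $a_k=0$ otherwise, with $E$ a $(-1)$-curve and $|I|\le 2$. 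So the remark about exponents ``varying continuously in equivariant parameters'' is unnecessary, and also not meaningful, since the exponents are non-equivariant intersection numbers.

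The blow-up half has a genuine gap. Theorem \ref{thm:KLT} does not say that a two-chart product equals $\Theta_{A_{r-1},\ell}/\overline{\eta}^r$ times the single-chart series.

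\begin{itemize}
\item \emph{What the correct decomposition is.} The right step is localization for the framing torus $T_2$ on $M_{\widehat{\PP}^2}(r,\ell,n)$; KLT is already local on $\widehat{\C}^2$, so nothing about fixed points of $\widehat{\PP}^2$ is needed. This gives \eqref{eq:GfromchiBl}: at $e_i=y^{-i}$, the left side of KLT is a sum over \emph{all} $\bfell\in\Z^r$ with $|\bfell|=\ell$ of $q^{D_{\bfell}}\Upsilon^\circ_{\bfell}\,\mathsf{G}_{\widehat{\C}^2,((\ell_{i-1}-\ell_i)[C])}$.
\item \emph{The unwanted terms.} This sum contains infinitely many $\bfell$ with some $\ell_{i-1}-\ell_i\notin\{0,1\}$. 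These correspond to classes $a_i=(\ell_{i-1}-\ell_i)C$ with $a_i^2\neq a_iK$. Proposition \ref{prop:main} does not apply to them, and they cannot be written in terms of the $C_{ij}$ at all: by Remark \ref{rem:propmainstrong} they involve $\overline{E}_i$ and $\overline{E}_{ij}$ separately.
\end{itemize}

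The missing idea is Lemma \ref{lem:comb}, which you need before anything reduces to subsets $I$.

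\begin{itemize}
\item \emph{Vanishing.} After $t_1=t_2=1$, $e_i=y^{-i}$, the prefactor $\Upsilon^\circ_{\bfell}$ vanishes for such $\bfell$. If some $\ell_{i-1}-\ell_i\le -1$, then $S_{\bfell}^\vee\otimes y$ contains the character $1$. If some $\ell_{i-1}-\ell_i\ge 2$, it contains $t_1^{-1}t_2^{-1}$. Either way $\Lambda_{-y}S_{\bfell}^\vee$ is killed.
\item \emph{Matching the survivors.} For $\epsilon_i=\ell_{i-1}-\ell_i\in\{0,1\}$, a quantum-binomial computation gives $\Upsilon^\circ_{\bfell}|_{t_1=t_2=1,\,e_i=y^{-i}}=\Upsilon_{\widehat{S},(\epsilon_iC)}/\Upsilon_{S,\mathbf{0}}$ exactly, and $D_{\bfell}=Q((\epsilon_iC))$.
\end{itemize}

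Only with this do the surviving terms become $\sum_{|\!|I|\!|\equiv\ell}C_I^{-1}$ via the bijection $\bfell\leftrightarrow I$ you describe.

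Relatedly, you misplace the normalizations. The factors $q^{Q}$ and $\Upsilon$ are matched against $q^{D_{\bfell}}$ and $\Upsilon^\circ_{\bfell}$. The $\eta$ versus $\overline{\eta}$ factor $q^{r/24}$ comes instead from $K_{\widehat{S}}^2=K_S^2-1$. Also, the left side changes by $B^{-1}\prod_{i\le j\in I}C_{ij}^{-1}=C_I^{-1}$, not by $B\prod C_{ij}^{-1}$. Your final discrete Fourier transform in $\ell \bmod r$ is the same theta-function identity the paper uses.
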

The former equations are proved by combining Proposition \ref{prop:main} with Theorem \ref{thm:stcostintro}; the latter by combining Proposition \ref{prop:main} with Theorem \ref{thm:KLT}. Some finesse is required when  transferring results from $M_{\PP^2}(r,n)$ to $\Hilb_{\bfbeta}^{\bfn}(S)$ for smooth projective $S$.
In Theorem \ref{thm:main} we do not obtain new equations when replacing $\ell$ by $\ell + kr$ or $-\ell$.  
Thus, without loss of generality, we may take $\ell = 0, \ldots, \lfloor r/2 \rfloor.$ 
The relations $C_{ij} = C_{r-j,r-i}$ are important for verifying $S$-duality in rank $r>2$, see \cite[Section 4.3]{GKL}. 

Since the universal series $A$ is known, this set of equations determines all universal functions for $r=2$
$$
A = \frac{-1}{\phi_{-2,1}(q^2,y^2)^{\frac{1}{2}} \Delta(q^2)^{\frac{1}{2}}}, \quad B = \Bigg( \frac{\Theta_{A_{1},0}}{\eta^2} \Bigg)^{-1}, \quad C_{11} = \frac{\Theta_{A_{1},0}}{\Theta_{A_{1},1}}.
$$
\begin{corollary}
For any smooth polarized surface $(S,H)$ satisfying $H_1(S,\Z) = 0$ and $p_g(S)>0$ and $c_1 \in H^2(S,\Z)$, we have
\begin{align*}
\frac{\sfZ_{S,H,c_1}^{(1^2)}}{(y^{\frac{1}{2}} - y^{-\frac{1}{2}})^{\chi(\O_S)}}  = \Bigg( \frac{-1}{\phi_{-2,1}(q^2,y^2)^{\frac{1}{2}} \Delta(q^2)^{\frac{1}{2}}} \Bigg)^{\chi(\O_S)} \Bigg( \frac{\Theta_{A_{1},0}}{\eta^2} \Bigg)^{-K_S^2}  \sum_{a}  \delta_{c_1,a} \SW(a)  \Bigg( \frac{\Theta_{A_{1},0}}{\Theta_{A_{1},1}} \Bigg)^{a^2}.
\end{align*}
\end{corollary}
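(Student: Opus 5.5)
The corollary is the $r=2$ specialization of Theorem \ref{thm:Laarakker}, so the plan is to identify the universal series $A$, $B$, $C_{11}$ explicitly and substitute. For $r=2$ there is a single class $a=a_1$, and the sum in Theorem \ref{thm:Laarakker} reads $\sum_a \delta_{c_1,a}\SW(a) C_{11}^{a^2}$. Here $\delta_{c_1,a}=\delta_{c_1,1\cdot a}$, since $\sum_i i a_i = a$. The series $A$ is given by \eqref{eqn:A} with $r=2$, which yields the first factor including the sign $(-1)^{r-1}=-1$. It therefore remains to show $B=(\Theta_{A_1,0}/\eta^2)^{-1}$ and $C_{11}=\Theta_{A_1,0}/\Theta_{A_1,1}$.

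For this I use the blow-up relations of Theorem \ref{thm:main} with $r=2$. The subsets of $[1]$ are $\varnothing$ and $\{1\}$, with $C_\varnothing=B$ and $C_{\{1\}}=BC_{11}$, and $\epsilon_2=-1$. For $\ell=0$ and $\ell=1$ this gives
\begin{align*}
B^{-1}+B^{-1}C_{11}^{-1} &= \frac{\Theta_{A_1^\vee,0}}{\eta^2}, \\
B^{-1}-B^{-1}C_{11}^{-1} &= \frac{\Theta_{A_1^\vee,1}}{\eta^2}.
\end{align*}
Adding and subtracting yields
\begin{align*}
B^{-1} &= \frac{\Theta_{A_1^\vee,0}+\Theta_{A_1^\vee,1}}{2\eta^2}, \\
C_{11} &= \frac{\Theta_{A_1^\vee,0}+\Theta_{A_1^\vee,1}}{\Theta_{A_1^\vee,0}-\Theta_{A_1^\vee,1}}.
\end{align*}
The symmetry relation $C_{11}=C_{11}$ in Theorem \ref{thm:main} is vacuous for $r=2$, so it plays no role here.

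The remaining step, which I expect to be the only real computation, is a lattice-theta identity. For $A_1$ the form is $\langle v,v\rangle=2v^2$ and $\langle v,v\rangle^\vee=v^2/2$. The characteristic term $e^{2\pi\sqrt{-1}\langle v,\ell e_1\rangle^\vee}$ equals $(-1)^{\ell v}$, and the elliptic variable appears as $y^{v/2}$. I will write
\[
\Theta_{A_1^\vee,0}+\Theta_{A_1^\vee,1}=2\sum_{v \text{ even}} q^{v^2/2}y^{v/2}.
\]
Substituting $v=2w$ gives $2\sum_w q^{2w^2}y^{w}$. On the other hand, $\Theta_{A_1,0}=\sum_w q^{w^2}y^{w}$ with $M_{A_1}^{-1}(1)=1/2$. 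So the exponents do not match as written, and I will need to check the normalizations of $q$ and $y$ in both theta functions carefully. Very likely the sum over $v\in 2\Z$ reproduces $\Theta_{A_1,0}$ up to the variable conventions, and the sum over odd $v$ reproduces $\Theta_{A_1,1}$, whose lattice is shifted by $\lambda=1/2$. Granting this matching, $B^{-1}=\Theta_{A_1,0}/\eta^2$ and $C_{11}=\Theta_{A_1,0}/\Theta_{A_1,1}$, which are exactly the formulas stated before the corollary.

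Substituting these values into Theorem \ref{thm:Laarakker} then gives the corollary. The hypotheses $H_1(S,\Z)=0$ and $p_g(S)>0$ are exactly those of Theorem \ref{thm:Laarakker}, and by the subsequent remark the right-hand side depends only on $c_1$, which justifies writing $\sfZ^{(1^2)}_{S,H,c_1}$.
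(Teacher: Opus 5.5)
Your route is the same as the paper's. You specialize Theorem \ref{thm:Laarakker} to $r=2$, take $A$ from \eqref{eqn:A}, and read off $B$ and $C_{11}$ from the blow-up equations of Theorem \ref{thm:main}; your linear algebra for $\ell=0,1$ is correct.

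The step you left open ("granting this matching") does not come from a convention problem. It comes from an arithmetic slip: the $q$-exponent in $\Theta_{A_1^\vee,\ell}$ is $\tfrac12\langle v,v\rangle^\vee=v^2/4$, not $v^2/2$. With this correction,
\begin{align*}
\Theta_{A_1^\vee,\ell}=\sum_{v\in\Z}(-1)^{\ell v}q^{v^2/4}y^{v/2}.
\end{align*}
Splitting into $v=2w$ and $v=2w-1$ gives exactly $\sum_w q^{w^2}y^{w}=\Theta_{A_1,0}$ and $\sum_w q^{(w-\frac12)^2}y^{w-\frac12}=\Theta_{A_1,1}$. Here you use $\lambda=\tfrac12$ and the $y$-exponent $\langle v-\tfrac{\ell}{2},\tfrac12\rangle=v-\tfrac{\ell}{2}$. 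Hence $\Theta_{A_1^\vee,0}+\Theta_{A_1^\vee,1}=2\Theta_{A_1,0}$ and $\Theta_{A_1^\vee,0}-\Theta_{A_1^\vee,1}=2\Theta_{A_1,1}$. This is the $r=2$ case of the identity $\sum_{\ell}\epsilon_r^{k\ell}\Theta_{A_{r-1},\ell}=\Theta_{A_{r-1}^\vee,k}$ invoked at the end of the paper's proof of Theorem \ref{thm:main}. Your formulas then become $B^{-1}=\Theta_{A_1,0}/\eta^2$ and $C_{11}=\Theta_{A_1,0}/\Theta_{A_1,1}$, as required.

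The detour through the dual lattice is also unnecessary. Before applying that identity, the paper's proof establishes $\sum_{I\subset[r-1],\ |\!|I|\!|\equiv \ell \bmod r}C_I^{-1}=\Theta_{A_{r-1},\ell}/\eta^r$. For $r=2$ this says directly that $B^{-1}=\Theta_{A_1,0}/\eta^2$ and $(BC_{11})^{-1}=\Theta_{A_1,1}/\eta^2$.
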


Setting $y=1$, this corollary provides a mathematical proof of the \emph{vertical contribution} of Vafa-Witten's original formula, i.e., line 1 of \cite[(5.38)]{VW} (and its generalization by Dijkgraaf-Park-Schroers, i.e., line 1 of \cite[(6.1)]{DPS}). For general $y$, it proves \cite[Remark 1.7]{GK2}.

For general $r$, the relations of Theorem \ref{thm:main} are not enough to determine all universal functions appearing in Theorem \ref{thm:Laarakker}. For $r=3$, there is 1 universal function more than the number of known relations. For $r=4$, there are 2 universal functions more than the number of relations, and for $r=5$, there are 4 universal functions more than the number of relations. Conjectural formulae for the missing universal functions for $r=3,4, 5$ can be found in \cite{GK2, GKL}.
\\

\noindent \textbf{Acknowledgments.} We thank M.~Bershtein, L.~Bertsch, B.~Davison, L.~G\"ottsche, L.~Hennecart, N.~Kuhn, O.~Leigh, D.~Maulik,  H.~Nakajima, R.~Ohkawa, {J.~Sch\"{u}rmann,} Y.~Tanaka, R.P.~Thomas, Y.~Zhao and Z.~Zhou for helpful discussions.  The first- and third-named authors were supported by the EPSRC through grant EP/R013349/1. The first-named author also was supported by the NSF through grant DMS-19027 and the World Premier International Research Center Initiative (WPI), MEXT, Japan. The second-named author is supported by NWO grant VI.Vidi.192.012 and ERC Consolidator Grant FourSurf 101087365. We also thank the organizers of the workshop \emph{New four-dimensional gauge theories} at MSRI Berkeley, where part of this work was carried out.

\section{Nested Hilbert schemes}\label{sec:GT}

In this section, we review and rephrase the required results from \cite{GT1, GT2, Laa1}. After recalling notation from K-theory, we review how to write the vertical contribution $\mathsf{Z}^{(1^r)}_{S,H,c_1}$ in terms of explicit tautological integrals over products of Hilbert schemes on projective surfaces. We then perform a series of reductions. We first use a universality result to recover the values of these integrals for arbitrary projective $S$ from their values for toric $S$. Using equivariant localization, we further reduce to an equivariant analog when $S$ is $\C^2$.

\subsection{K-theory preliminaries}

\subsubsection{} Given a $\C$-scheme $X$ of finite type and action of a torus $T$, we let $K^0_T(X)$ denote the Grothendieck group of $T$-equivariant locally free sheaves and $K_0^T(X)$ denote the Grothendieck group of $T$-equivariant coherent sheaves on $X$. For $X$ a smooth variety, the natural group homomorphism  
$K^0_T(X)\to K_0^T(X)$ is an isomorphism. The group $K^0_T(X)$ carries an algebra structure with respect to which $K_0^T(X)$ carries a module structure. When $T$ is trivial, we denote these groups by $K^0(X)$ and $K_0(X)$, respectively.

\subsubsection{}
For a $T$-equivariant vector bundle $V$ on a $\C$-scheme $N$ of finite type, define 
$$
\Lambda_{-y} V := \sum_{i=0}^{\rk(V)}  [\Lambda^i V] \cdot (-y)^i \in K^0_T(N)[y]. 
$$
Here, $y$ can denote either a formal parameter or a non-trivial $T$-weight. As explained in \cite[2.1.8]{Liu1}, the splitting principle and the identity $$\frac{1}{1-yL}=\frac{1/(1-y)}{1+\frac{(1-L)}{1-y}y }=\sum_{m\geq 0} \frac{(-y)^m}{(1-y)^{m+1}}(1-L)^{\otimes m}$$ for line bundles $L$ together imply that the operator $\Lambda_{-y}$ extends to an operator $$K^0_T(N)\to K^0_T(N)\Big[\frac{1}{1-y}\Big].$$ 
We will also use the following specialization. Suppose that $T$ acts trivially on a $\C$-scheme $N$ of finite type. For $T$-equivariant vector bundles $V$ and $W$ on $N$, write $$V-W=\sum_{w\in T^{\vee}} w\cdot (V_w-W_w)\in K^0_T(N)$$ where $V_w$ and $W_w$ denote the $w$-weight spaces of $V$ and $W$, respectively. Then, given a class $V-W\in K^0_T(N)$ such that $W_{1}=0$, 
define $$\Lambda_{-1}(V-W):=\bigotimes_{w} \Lambda_{-w}(V_w-W_w) \in K^0_T(N)\Big[\frac{1}{1-w}\Big]_{w\neq 1}$$ and $$ \frac{1}{\Lambda_{-1}(W-V)}:=\Lambda_{-1}(V-W).$$

\subsection{Reduction to $\Hilb^{\bfn}(S)$}

Let $(S,H)$ be a smooth polarized surface satisfying $H_1(S,\Z) = 0$, $p_g(S)>0$, and let $r>1$.
As discussed in the introduction, for any effective algebraic $\bfbeta \in H^2(S,\Z)^{r-1}, \bfn \in \Z_{\geq 0}^{r}$, a perfect obstruction theory on the nested Hilbert scheme of points and curves $\Hilb_{\bfbeta}^{\bfn}(S)$ is constructed in \cite{GT1, GT2}. Let $\I_i$ be the pull-back along projection to $S \times \Hilb^{\bfn}(S) \times |\bfbeta|$ of the universal ideal sheaf on $S \times \Hilb^{n_i}(S)$. Let $\pt = \mathrm{Spec}(\C)$ and consider the projection and inclusion
$$
\pi \colon \Hilb^{\bfn}(S)\times S \times |\bfbeta| \to \Hilb^{\bfn}(S) \times |\bfbeta|, \quad \iota \colon \Hilb^{\bfn}_{\bfbeta}(S) \hookrightarrow \Hilb^{\bfn}(S) \times |\bfbeta|.
$$ 
We denote by $R\hom_\pi(-,-)$ the derived composition $R\pi_* R\hom(-,-)$.
\begin{theorem}[Gholampour-Thomas] \label{thm:GT}
The push-forward 
$$
\iota_* [\Hilb_{\bfbeta}^{\bfn}(S)]^{\vir} \in H_{2n_0+2n_{r-1}}(\Hilb^{\bfn}(S) \times |\bfbeta|,\Z)
$$ 
equals\footnote{The Euler classes in the product are well-defined by \cite[Theorem 8.3]{GT1}, i.e., by generalized Carlsson-Okounkov vanishing. In particular, for $\mathbb{F}_i := R \pi_* \O(\beta_i)  - R \hom_{\pi}(\I_{i-1},\I_i(\beta_i))$, we have $c_{>(n_{i-1}+n_i)}(\mathbb{F}_i) = 0$ and $e(\FF_i) := c_{n_{i-1}+n_i}(\FF_i)$, where $\rk(\FF_i) = n_{i-1}+n_i$.}
\begin{align*}
\prod_{i=1}^{r-1} \SW(\beta_i) \cdot e\Big(R\pi_*\O(\beta_i)- R \hom_{\pi}(\I_{i-1},\I_i(\beta_i))\Big) \cap [\Hilb^{\bfn}(S) \times \pt \times \cdots \times \pt],
\end{align*}
where $\pt \times \cdots \times \pt \in |\bfbeta|$.
\end{theorem}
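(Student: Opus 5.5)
The plan is to reduce to the one-step case $r=2$, realize $\Hilb^{n_0,n_1}_{\beta}(S)$ as a degeneracy locus inside the smooth ambient space, and then assemble the general case as an iterated fibre product. Throughout I would take as given the description of the Gholampour-Thomas perfect obstruction theory \cite{GT1, GT2}. For a single step, the inclusion $I_{Z_0}(-C)\subset I_{Z_1}$ is equivalent to the vanishing of the composite
$$
\I_0 \to \O_S \xrightarrow{\,s_C\,} \O(\beta) \to \O_{Z_1}(\beta),
$$
where $s_C$ is the section cutting out $C$. Pushing forward along $\pi$, this turns into a section of the sheaf cohomology of the two-term complex $\FF_1 = R\pi_*\O(\beta) - R\hom_\pi(\I_0,\I_1(\beta))$. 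Here $R\pi_*\O(\beta)$ is pulled back from $|\beta|$ and receives the tautological section, and the two terms record the deformations of $Z_0,Z_1$ through $\mathrm{Ext}$-groups. Riemann-Roch gives $\rk \FF_1 = n_0+n_1$.

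First, I would show that the Gholampour-Thomas obstruction theory on $\Hilb^{n_0,n_1}_\beta(S)$ coincides with the natural obstruction theory of a degeneracy locus of a map of vector bundles representing $\FF_1$ on $\Hilb^{n_0}(S)\times\Hilb^{n_1}(S)\times|\beta|$. Second, I would apply the Fulton-type formula for the pushforward of such a virtual class: it is the localized top Chern class of $\FF_1$ capped with the ambient fundamental class. For this to equal $c_{n_0+n_1}(\FF_1)$, the classes $c_{>n_0+n_1}(\FF_1)$ must vanish, and that is exactly the generalized Carlsson-Okounkov vanishing \cite[Theorem 8.3]{GT1} quoted in the footnote.

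Third, I would account for the $|\beta|$ factor. The ambient $|\beta|$ is not used with its fundamental class but with its own virtual class coming from the deformation-obstruction theory of divisors, namely $H^0(\O_C(C))$ and $H^1(\O_C(C))$. When $p_g>0$, this virtual class has degree $\beta(\beta-K_S)/2$; it is zero unless $\beta^2=\beta K_S$, in which case it equals $\SW(\beta)\cdot[\pt]$ \cite[Proposition 6.3.1]{Moc}. Compatibility of the obstruction theories, in the form of a functoriality triangle relative to $|\beta|$, then yields the product $\SW(\beta)\,e(\FF_1)\cap[\Hilb^{\bfn}(S)\times\pt]$.

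For general $r$, $\Hilb^{\bfn}_{\bfbeta}(S)$ is the iterated fibre product of the one-step schemes over the intermediate factors $\Hilb^{n_i}(S)$. The obstruction theory splits as the sum of the $r-1$ one-step contributions relative to the smooth ambient space, so the localized Euler classes multiply. This gives $\prod_i \SW(\beta_i)\,e(\FF_i)$, and the total degree $2n_0+2n_{r-1}$ comes out of a dimension count.

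The main obstacle is the first step: identifying the Gholampour-Thomas (equivalently, $\C^*$-localized Tanaka-Thomas) obstruction theory with the degeneracy-locus obstruction theory globally rather than only pointwise. In particular, this requires handling the fact that $\FF_i$ is only a two-term complex rather than a bundle. My first attempt would be to resolve $\FF_i$ by a global map of vector bundles $E_0\to E_1$, using ampleness on the quasi-projective ambient space. I would then check that the induced section's derived zero locus has the same cotangent complex truncation as the Gholampour-Thomas theory; the Carlsson-Okounkov vanishing guarantees that the answer does not depend on the resolution chosen.
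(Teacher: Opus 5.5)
The paper does not prove this statement; it quotes it from Gholampour--Thomas \cite{GT1, GT2}. Several parts of your outline are reasonable: the reduction to one step, the use of $[|\beta|]^{\vir}=\SW(\beta)[\pt]$, the product over the $r-1$ steps, and the degree count $2n_0+2n_{r-1}$. The central step, however, does not work as you set it up. That step is how the nested scheme gets a perfect obstruction theory whose class pushes forward to $c_{n_0+n_1}(\FF_1)$.

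\textbf{The section you describe does not live on $\FF_1$.} The vanishing of $I_{Z_0}\to\O(\beta)\to\O_{Z_1}(\beta)$ is a section of $\pi_*\hom(\I_0,\O_{\mathcal Z_1}(\beta))$, with $\mathcal Z_1$ the universal subscheme. In K-theory, $R\hom_\pi(\I_0,\O_{\mathcal Z_1}(\beta))$ has rank $n_1$, not $n_0+n_1$. It differs from $\FF_1$ by $R\hom_\pi(\O_{\mathcal Z_0},\O(\beta))$, which has rank $n_0$. Moreover, $\FF_1$ is not naturally a cone receiving the tautological section. The natural target of $R\hom_\pi(\I_0,\I_1(\beta))$ is $R\hom_\pi(\I_0,\O(\beta))$, and this agrees with $R\pi_*\O(\beta)$ only on $h^0$.

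\textbf{Your proposed repair does not give a perfect obstruction theory.} Suppose you have a two-term representative $[F_0\xrightarrow{\sigma}F_1]$ and a section of $h^0=\ker\sigma$. Its zero scheme only carries the three-term complex $[F_1^\vee\to F_0^\vee\to\Omega_A]$ in degrees $[-2,0]$. Its truncation to $[-1,0]$ is perfect only where $\sigma$ has constant rank. But $h^0$ jumps along the nested locus: for instance, $\dim\Hom(I_{Z_0},\O_{Z_1}(\beta))$ exceeds $n_1$ exactly where $Z_0\cap Z_1\neq\varnothing$. So there is no Behrend--Fantechi virtual class and no ``localized top Chern class of $\FF_1$''. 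Carlsson--Okounkov vanishing cannot fix this. Independence of the chosen resolution also needs no argument, since Chern classes depend only on the K-theory class.

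\textbf{The missing idea is the degeneracy-locus ``virtual resolution'' of \cite{GT1, GT2}.} A nonzero $\phi\in\Hom(I_{Z_0},I_{Z_1}(\beta))\subset\Hom(I_{Z_0},\O(\beta))=H^0(\O(\beta))$ is exactly a section whose divisor $C$ satisfies $I_{Z_0}(-C)\subset I_{Z_1}$. Hence the nested scheme is the projectivized kernel of a map of vector bundles $E_0\to E_1$ representing $R\hom_\pi(\I_0,\I_1(\beta))$. Equivalently, it is the zero locus in $\PP(E_0)$ of the section of the genuine vector bundle $E_1\otimes\O(1)$ induced by $\O(-1)\subset E_0\to E_1$. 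This is what produces a genuine perfect obstruction theory; Gholampour--Thomas then correct it for the $\mathrm{Ext}^2$-term and for the obstruction theory of $|\beta|$. The pushforward is computed by a Segre-class (Thom--Porteous) argument.

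\textbf{Your ``first step'' is also misdirected.} The Gholampour--Thomas obstruction theory is this degeneracy-locus one by construction. Its agreement with the $\C^*$-localized Tanaka--Thomas theory is a separate result of \cite{GT2}, used elsewhere in the paper. It is not an input to this formula.
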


\begin{remark}
We expect that Theorem \ref{thm:GT} can be upgraded to virtual structure sheaves. Namely, we expect that  $\iota_* \O_{\Hilb^{\bfn}_{\bfbeta}(S)}^{\vir} \in K_0( \Hilb^{\bfn}(S) \times |\bfbeta|)$ equals
\[
\mathcal{G}:=\prod_{i=1}^{r-1} \SW(\beta_i) \cdot \Lambda_{-1} \Big(R\pi_*\O(\beta_i)- R \hom_{\pi}(\I_{i-1},\I_i(\beta_i))\Big)^{\vee} \cdot \O_{\Hilb^{\bfn}(S) \times \pt \times \cdots \times \pt}.
\]

 By \cite[(4.27)]{GT2}, the K-class $R\pi_*\O(\beta_i)- R \hom_{\pi}(\I_{i-1},\I_i(\beta_i))$ can be represented by a vector bundle after pull-back to an affine bundle. The Thom isomorphism theorem then implies that the exterior products appearing in the expression for $\mathcal{G}$ are well-defined.

For our results, however, it is sufficient to use the following ``numerical'' K-theoretic version of Theorem \ref{thm:GT}: for any $\mathcal{F}\in K^0( \Hilb^{\bfn}(S) \times |\bfbeta|)$ one has \begin{align*}
\chi&\Big( \Hilb^{\bfn}(S) \times |\bfbeta|, \mathcal{F}\otimes \iota_* \O_{\Hilb^{\bfn}_{\bfbeta}(S)}^{\vir}\Big)=\chi( \Hilb^{\bfn}(S) \times |\bfbeta|, \mathcal{F}\otimes \mathcal{G}).
\end{align*}  This equality follows from the virtual Hirzebruch-Riemann-Roch formula (\cite[Theorem 4.5.1]{CFK} and \cite[Corollary 3.4]{FG}), as well as  Theorem \ref{thm:GT} and \cite[(4.27)]{GT2}.
\end{remark}

Fix $c_1 \in H^2(S,\Z)$. Define line bundles $L_0, \ldots, L_{r-1}$ on $S$ determined (up to isomorphism) by the following equations
\begin{equation} \label{eqn:beta}
K_S - \beta_i = c_1(L_{i-1} \otimes L_i^*), \quad \sum_{i=0}^{r-1} c_1(L_i) = c_1.
\end{equation}

Consider the line bundle $\O_{|\beta_i|}(1)$ on the complete linear system $|\beta_i|$. We form the following line bundles on $S \times \Hilb^{\bfn}(S) \times |\bfbeta|$
\begin{align*}
\L_0 &= L_0, \\ 
\L_1 &= L_1 \boxtimes \O_{|\beta_1|}(1), \\
&\cdots \\
\L_{r-1} &= L_{r-1} \boxtimes \O_{|\beta_{1}|}(1) \boxtimes \cdots \boxtimes \O_{|\beta_{r-1}|}(1).
\end{align*}
Then $\O_S(\beta_i) \boxtimes \O_{|\beta_i|}(1)$ have tautological sections inducing maps $$\L_{i-1} \to \L_i \boxtimes K_S.$$ We obtain a $\C^*$-equivariant Higgs pair on $S \times \Hilb^{\bfn}(S) \times |\bfbeta|$
$$
\Phi_{\L} \colon \mathbb{E}_{\L} \to \mathbb{E}_{\L} \boxtimes K_S \otimes y, \quad \mathbb{E}_{\L} := \bigoplus_{i=0}^{r-1} \L_i \otimes y^{-i}.
$$
Next, we define
\begin{equation} \label{defE}
\mathbb{E} := \bigoplus_{i=0}^{r-1} \I_i \otimes \L_i \otimes y^{-i}.
\end{equation}
Over the incidence locus $S \times \Hilb_{\bfbeta}^{\bfn}(S) \subset S \times  \Hilb^{\bfn}(S) \times |\bfbeta|$, the tautological Higgs field $\Phi_{\L}$ factors through $\mathbb{E}$. We obtain a Higgs pair $(\EE, \Phi)$ on $S \times  \Hilb^{\bfn}_{\bfbeta}(S)$.  This construction holds for any $\bfbeta, \bfn$ and does not involve stability. 

Suppose that $H,r,c_1,c_2$ are given such that there are no rank $r$ strictly $H$-semistable Higgs pairs $(E,\phi)$ on $S$ with $c_1(E) = c_1$ and $c_2(E) = c_2$. We abbreviate by $N:=N_S^H(r,c_1,c_2)$ the moduli space of rank $r$ $H$-stable Higgs pairs $(E,\phi)$ on $S$ with $c_1(E) = c_1$, and $c_2(E) = c_2$.\footnote{As $H_1(S,\Z) = 0$, it is equivalent to fix determinant or first Chern class.} Then $N_{(1^r)}^{\C^*}$ is isomorphic to the union of incidence loci $\Hilb_{\bfbeta}^{\bfn}(S)$ with $\bfbeta  \in H^2(S,\Z)^{r-1}$ and $\bfn \in \Z_{\geq 0}^{r}$ satisfying the following properties: (1) all elements of $\Hilb_{\bfbeta}^{\bfn}(S)$ are $H$-stable, and (2) $\bfn, \bfbeta$ satisfy
\begin{align}
\begin{split} \label{Cherneq}
&c_1 \equiv \sum_{i=1}^{r-1} i(K_S - \beta_i) \mod rH^2(S,\Z), \\
&c_2 = |\bfn| + \frac{r-1}{2r} c_1^2 + Q(K_S - \beta_1, \ldots, K_S - \beta_{r-1}),
\end{split}
\end{align}
where $|\bfn| := \sum_{i} n_i$ and $Q$ is as  defined in \eqref{def:Q}. For proofs of these statements, we refer to \cite{GT2, Laa1}. Then the contribution of the component $\Hilb_{\bfbeta}^{\bfn}(S) \subset N_{(1^r)}^{\C^*}$ to the K-theoretic Vafa-Witten invariant \eqref{def2} is shown in \cite{GT2} to be 
\begin{equation} \label{eqn:contrdegloc}
\chi\Bigg(\Hilb_{\bfbeta}^{\bfn}(S),\O_{\Hilb_{\bfbeta}^{\bfn}(S)}^{\vir} \otimes \frac{(K^{\vir}_{N})^{\frac{1}{2}}}{\Lambda_{-1} (\nu_N^{\vir})^\vee} \Bigg|_{\Hilb_{\bfbeta}^{\bfn}(S)} \Bigg).
\end{equation}
By the virtual Riemann-Roch theorem, 
this holomorphic Euler characteristic can also be expressed as an integral over the class $[\Hilb_{\bfbeta}^{\bfn}(S)]^{\vir}$ of Theorem \ref{thm:GT}.
In $K^{\C^*}_0(\Hilb_{\bfbeta}^{\bfn}(S))$ we have the following equality from \cite[Section 4]{Laa1}
$$
T_N^{\vir}|_{\Hilb_{\bfbeta}^{\bfn}(S)} = \Big( R \hom_\pi(\mathbb{E}, \mathbb{E} \boxtimes K_S \otimes y)_0 - R \hom_\pi(\mathbb{E}, \mathbb{E})_0 \Big)\Big|_{\Hilb_{\bfbeta}^{\bfn}(S)}.
$$

Here we only considered the incidence schemes $\Hilb_{\bfbeta}^{\bfn}(S)$ for which the corresponding Higgs pairs are $H$-stable. However, crucially, it is shown in \cite[Proposition 3.5]{Laa1} that if $\Hilb_{\bfbeta}^{\bfn}(S)$ contains an $H$-unstable element, then
$$
\iota_* [\Hilb_{\bfbeta}^{\bfn}(S)]^{\vir} = 0.
$$
Therefore, we can sum the contributions \eqref{eqn:contrdegloc} over \emph{all} $\bfbeta, \bfn$ satisfying \eqref{Cherneq}. 

Suppose $r,c_1$ are fixed such that there are no rank $r$ strictly $H$-semistable Higgs pairs $(E,\phi)$ on $S$ with $c_1(E) = c_1$. By Theorem \ref{thm:GT}, one can now express $\sfZ_{S,H,c_1}^{(1^r)}$ in terms of holomorphic Euler characteristics over $\Hilb^{\bfn}(S) \times \pt \times \cdots \times \pt \subset  \Hilb_{\bfbeta}^{\bfn}(S)$.

\subsection{A class on the Hilbert scheme}

We consider the class 
\begin{equation} \label{def:T[n]}
T^{[\bfn]} := R \hom_{\pi}( \E^{[\bfn]},\E^{[\bfn]} \boxtimes K_S \otimes y)_0 - R \hom_{\pi}( \E^{[\bfn]},\E^{[\bfn]})_0\in K^0_{\C^*}(\Hilb^{\bfn}(S)), 
\end{equation}
where
\begin{equation} \label{def:E[n]}
\E^{[\bfn]} :=  \bigoplus_{i=0}^{r-1} \I_i \boxtimes L_i \otimes y^{-i}.
\end{equation}
We also write $\Omega^{[\bfn]} := (T^{[\bfn]})^\vee$. Next, we define
\begin{equation} \label{def:ai}
a_i := K_S - \beta_i, \quad i=1, \ldots, r-1.
\end{equation}
Using the normalized complexes (where we suppress pull-back from a point)
\begin{equation} \label{def:T0[n]}
T^{[\bfn]}_0 := T^{[\bfn]} - T^{[\boldsymbol{0}]}, \quad \Omega^{[\bfn]}_0 = (T^{[\bfn]}_0)^\vee,
\end{equation}
we set
\begin{align} \label{eqn:defGS}
\mathsf{G}_{S,\boldsymbol{a}} :=  \sum_{\bfn \in \Z_{\geq 0}^r} q^{|\bfn|}  \chi(\Hilb^{\boldsymbol{n}}(S),  \Upsilon_{S,\boldsymbol{a},\boldsymbol{n}})\in \Q(y^{\frac{1}{2}})[[q]]
\end{align}
for 
\begin{align}\label{eqn:defUps}
\Upsilon_{S,\boldsymbol{a},\boldsymbol{n}}: =
\frac{\prod_{i} \Lambda_{-1} \Big(R\pi_*\O(\beta_i)- R \hom_{\pi}(\I_{i-1},\I_i(\beta_i)) \Big)^{\vee}}{\Lambda_{-1} (\Omega_0^{[\bfn]})^m}\otimes \det(\Omega_0^{[\bfn]})^{\frac{1}{2}}
\end{align}
where $(-)^m$ denotes the $\C^*$-moving part, and for convenience we take  $$\det(\Omega^{[\bfn]}_0)^{\frac{1}{2}}= \det\Big(y^{-\frac{1}{2}}R\hom_{\pi}(\E^{[\bfn]},\E^{[\bfn]})_0\Big)\in \Pic_{\C_{y^{1/2}}^*}(\Hilb^{\bfn}(S))$$ to be an explicit choice of square root of $\det(\Omega^{[\bfn]}_0)$ which may require passing to a double cover $\C^*_{y^{1/2}}$ of $\C^*$. By Grothendieck-Riemann-Roch the value of $\mathsf{G}_{S,\bfa}$ does not depend on the choice of square root. We recall that $\beta_i = K_S - a_i$. Note that $$\prod_{i} \Lambda_{-1} \Big( R\pi_*\O(\beta_i) - R \hom_{\pi}(\I_{i-1},\I_i(\beta_i))  \Big)^{\vee}= \Lambda_{-1}\Big(T_{\Hilb^{\bfn}(S)} - (T^{[\bf n]}_0)^{f}\Big)^\vee,$$ where $(-)^f$ denotes the $\C^*$-fixed part. 
We thus obtain the following well-defined expression
\begin{align}\label{def:altUps}
\Upsilon_{S,\boldsymbol{a}, \bfn} = \Lambda_{-1}(-\Omega_0^{[\bfn]}+\Omega_{\Hilb^{\bfn}(S)}) \otimes \det(\Omega_0^{[\bfn]})^{\frac{1}{2}} \in K^0_{\C^*}(\Hilb^{\bfn}(S))\Big[y^{\frac{1}{2}},\frac{1}{1-y^{k}}\Big]_{k\in \Z_{\neq0}}.
\end{align}

By Hirzebruch-Riemann-Roch, the K-theoretic expression $\chi(\Hilb^{\bfn} (S), \Upsilon_{S,\boldsymbol{a},\boldsymbol{n}})$ defined in \eqref{eqn:defGS} and \eqref{eqn:defUps} is equal to the cohomological expression $Q_{\bfn}(S,{\boldsymbol{\beta}},y)$ defined in \cite[Section 5]{Laa1}. 

To handle the constant factor arising from the pull-back of the complex $T^{[\boldsymbol 0]}$, we set \begin{align} \label{def:Upsiloncstorigin}
\Upsilon_{S,\boldsymbol{a}} :=  \chi\Big(\pt, \frac{\det(\Omega^{[\boldsymbol{0}]})^{\frac{1}{2}}}{\Lambda_{-1}(\Omega^{[\boldsymbol{0}]})^{m}}\Big)(y^{\frac{1}{2}} - y^{-\frac{1}{2}})^{-\chi(\O_S)} \prod_i (-1)^{(r-1) i a_i^2 } \in \Q(y^{\frac{1}{2}}).
\end{align}
The term $\Upsilon_{S,\boldsymbol{a}}$ is equal to the quantity $F(S,\boldsymbol{\beta},y)$ defined in \cite[Section 5]{Laa1}. 

The main result of \cite[Sections 4--6]{Laa1} is that
\begin{align}
\begin{split} \label{eqn:Laaeqn}
\frac{\sfZ_{S,H,c_1}^{(1^r)}}{(y^{\frac{1}{2}} - y^{-\frac{1}{2}})^{\chi(\O_S)}} = q^{-\frac{r}{2} \chi(\O_S) + \frac{r}{24}K_S^2} \sum_{\boldsymbol{a}} \delta_{c_1,\sum_i i a_i}  q^{Q(\boldsymbol{a})} \Upsilon_{S,\boldsymbol{a}} \mathsf{G}_{S,\boldsymbol{a}}  \prod_i \SW(a_i), 
\end{split}
\end{align}
where $\delta_{a,b}$ is defined in \eqref{def:delta}. The relation $\SW(a_i) = (-1)^{\chi(\O_S)} \SW(\beta_i)$ is used in the argument for the above result.

\subsubsection{} 
The definitions of $\Upsilon_{S,\boldsymbol{a},\boldsymbol{n}}$ and $\mathsf{G}_{S,\boldsymbol{a}}$ may be extended to allow $S$ to be any (possibly disconnected) smooth projective surface and $\boldsymbol{a} \in H^2(S,\Z)^{r-1}$ to be any collection of algebraic classes. In particular, it  need not be the case that $a_i^2 = a_i K_S$ for all $i=1, \ldots r-1$.

In this setting, given such $S$ and $\bfa$, we choose $L_0,\ldots,L_{r-1}\in \Pic(S)$ such that $a_i=c_1(L_{i-1}\otimes L_i^*)$ for $1\leq i <r$.  We set $\beta_i=K_{S}-a_i$ and define the generating series $\mathsf{G}_{S,\boldsymbol{a}}$ using \eqref{def:T[n]} \eqref{def:E[n]} \eqref{def:T0[n]}, \eqref{eqn:defGS} and \eqref{eqn:defUps}. Note that the series $\mathsf{G}_{S,\bfa}$ does not depend on the above choice of $L_i$ and, in contrast to \eqref{eqn:beta}, we do not fix a $c_1\in H^2(S,\Z)$ when defining $\mathsf{G}_{S,\bfa}$.
The series $\mathsf{G}_{S,\bfa}$ has constant coefficient $1$.

\subsection{Universality} 

For any $0 \leq m \leq n$, we define the quantum number and quantum binomial coefficient
\begin{align*}
[n]_y &:= y^{-(n-1)/2} + y^{-(n-3)/2} + \cdots + y^{(n-3)/2} + y^{(n-1)/2}, \\
\binom{n}{m}_y &:= \frac{[n]_y \cdots [n-m+1]_y}{[1]_y \cdots [m]_y}.
\end{align*}
Suppose that $a_i^2 = a_i K_S$ for all $i$. This case is the only one of interest, because if $\SW(a_i) \neq 0$ then $a_i^2 = a_i K_S$ \cite{Moc}.  In this case, the quantity $\Upsilon_{S,\boldsymbol{a}}$ was determined in \cite[Proposition 6.3]{Laa1}
\begin{equation} \label{def:Upsiloncst}
 \Upsilon_{S,\boldsymbol{a}}= \Bigg(\frac{(-1)^{r-1}}{[r]_y (y^{\frac{1}{2}} - y^{-\frac{1}{2}})} \Bigg)^{\chi(\O_S)} \prod_{i=1}^{r-1} \binom{r}{i}^{-a_i^2}_y \prod_{1 \leq i < j \leq r-1} \Bigg( \frac{[j]_y[r-i]_y}{[j-i]_y[r]_y} \Bigg)^{a_i a_j}.
\end{equation}

Moreover, for a disconnected surface $S = S' \sqcup S''$ we have
$$
\Hilb^{n}(S) \cong \bigsqcup_{n = n' + n''} \Hilb^{n'}(S') \times \Hilb^{n''}(S'').
$$
By a standard argument, cf.~\cite[Section 7]{Laa1}, and by \eqref{def:Upsiloncst} we find the following. 
\begin{lemma}\label{lem:Mult}
If $S = S' \sqcup S''$ and $\boldsymbol{a} = \boldsymbol{a}' \oplus \boldsymbol{a}''$, then
\begin{align*}
\mathsf{G}_{S,\boldsymbol{a}} &= \mathsf{G}_{S',\boldsymbol{a}'} \mathsf{G}_{S'',\boldsymbol{a}''},
\\
\Upsilon_{S,\boldsymbol{a}}&=\Upsilon_{S',\boldsymbol{a}'} \Upsilon_{S'',\boldsymbol{a}''}.
\end{align*}
\end{lemma}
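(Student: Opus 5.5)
We prove the two identities of Lemma \ref{lem:Mult} separately. The first follows from the decomposition of the Hilbert scheme of a disconnected surface together with the multiplicativity of every ingredient in \eqref{eqn:defUps}. The second follows from the explicit formula \eqref{def:Upsiloncst}.

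\textbf{Multiplicativity of $\mathsf{G}$.} Write $\Hilb^{\bfn}(S)=\prod_i\Hilb^{n_i}(S)$. The displayed isomorphism for $\Hilb^{n}(S)$ gives
\[
\Hilb^{\bfn}(S)\cong\bigsqcup_{\bfn=\bfn'+\bfn''}\Hilb^{\bfn'}(S')\times\Hilb^{\bfn''}(S'').
\]
The line bundles $L_i$ on $S$ restrict to choices $L_i'$, $L_i''$ on $S'$, $S''$ satisfying $a_i'=c_1(L'_{i-1}\otimes L_i'^*)$, and likewise for $S''$. Recall that $\mathsf{G}$ does not depend on these choices. On the component indexed by $(\bfn',\bfn'')$, the universal ideal sheaf on $S\times\Hilb^{\bfn}(S)$ is the disjoint union of the pullbacks of the universal ideals on $S'\times\Hilb^{\bfn'}(S')$ and $S''\times\Hilb^{\bfn''}(S'')$. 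Hence $\E^{[\bfn]}$ splits in the same way.

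Since $\pi$ is the projection from $S'\sqcup S''$, pushforward along it is a sum over the two pieces. This gives
\[
R\hom_\pi(\E^{[\bfn]},\E^{[\bfn]}\otimes K_S\otimes y)=p_1^*(\cdots)'+p_2^*(\cdots)'',
\]
and the same holds for the untwisted term. For the trace-free part we must check that subtracting $R\pi_*\O_S\cdot(y-1)$ splits as well. It does, because $R\pi_*\O_S=R\pi_*\O_{S'}+R\pi_*\O_{S''}$, and the same holds with $K_S$ in place of $\O_S$.

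Therefore $T^{[\bfn]}=p_1^*T^{[\bfn']}+p_2^*T^{[\bfn'']}$. The normalization by $T^{[\boldsymbol 0]}$ splits because $T^{[\boldsymbol0]}$ is the sum of the two constant terms, so $T_0^{[\bfn]}=p_1^*T_0^{[\bfn']}+p_2^*T_0^{[\bfn'']}$. Likewise $\Omega_{\Hilb^{\bfn}(S)}$ is the sum of the pulled-back cotangent bundles. The operations $\Lambda_{-1}(-)$, $\det$ and the chosen square root $\det(y^{-1/2}R\hom_\pi(\cdots)_0)$ all turn sums into tensor products. Applying this to \eqref{def:altUps} gives
\[
\Upsilon_{S,\bfa,\bfn}=\Upsilon_{S',\bfa',\bfn'}\boxtimes\Upsilon_{S'',\bfa'',\bfn''}.
\]
The K\"unneth formula then gives $\chi(\Hilb^{\bfn}(S),\Upsilon_{S,\bfa,\bfn})$ as the product of the two Euler characteristics. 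Summing over $\bfn'+\bfn''=\bfn$ with weight $q^{|\bfn'|+|\bfn''|}$ yields $\mathsf{G}_{S,\bfa}=\mathsf{G}_{S',\bfa'}\mathsf{G}_{S'',\bfa''}$.

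\textbf{Multiplicativity of $\Upsilon_{S,\bfa}$.} In the case $a_i^2=a_iK_S$, the claim follows directly from \eqref{def:Upsiloncst}. Indeed, $\chi(\O_S)=\chi(\O_{S'})+\chi(\O_{S''})$, and every intersection number $a_ia_j$ on $S$ is the sum of the corresponding numbers on $S'$ and $S''$, since classes on the two components are orthogonal. Each factor of \eqref{def:Upsiloncst} is a fixed base raised to one of these additive exponents, so the product splits.

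For general $\bfa$ we instead use definition \eqref{def:Upsiloncstorigin}. The class $T^{[\boldsymbol0]}$ splits as above, so the factor $\chi(\pt,\ldots)$ is multiplicative by the same $\Lambda_{-1}$ and $\det$ argument. The remaining factors are powers with exponents $\chi(\O_S)$ and $a_i^2$, which are additive.

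The only point needing real care is the trace-free part and the choice of square root. One must check that the chosen root is compatible with the splitting, and not merely correct up to sign. This holds because both the trace term and the square root are defined through $R\hom_\pi(\E^{[\bfn]},\E^{[\bfn]})_0$, which is additive over components. Everything else in the argument is formal.
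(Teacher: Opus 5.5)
Your proposal is correct and is essentially the paper's argument: the paper proves the first identity by the ``standard argument'' of \cite[Section 7]{Laa1}, which consists of decomposing $\Hilb^{\bfn}(S)$ over $\bfn=\bfn'+\bfn''$, splitting $T_0^{[\bfn]}$, $\Omega_{\Hilb^{\bfn}(S)}$ and the square root, and applying K\"unneth; it proves the second identity from the explicit formula \eqref{def:Upsiloncst}. Your additional check via the definition \eqref{def:Upsiloncstorigin} is worthwhile, because applying \eqref{def:Upsiloncst} to $S'$ and $S''$ separately requires $a_i^2=a_iK$ on each component, and the condition on $S$ alone does not imply this.
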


In \cite[Section 7]{Laa1}, this multiplicative property is combined with the argument of \cite[Theorem 5.1]{GNY}, which generalizes the Ellingsrud-G\"ottsche-Lehn universality theorem \cite[Theorem 4.2]{EGL} to products of Hilbert schemes.

Thus, there exist universal power series $\overline{A}$, $\overline{B}$, $\{\overline{E}_i\}_{i=1}^{r-1}$, $\{\overline{E}_{ij}\}_{1 \leq i \leq j \leq r-1}$, depending only on $r$ and with constant coefficient 1, such that
\begin{equation} \label{eqn:univdef}
 \mathsf{G}_{S,\boldsymbol{a}} = \overline{A}^{\chi(\O_S)} \overline{B}^{K_S^2} \prod_i \overline{E}_i^{a_i K_S} \prod_{i \leq j} \overline{E}_{ij}^{a_i a_j}.
\end{equation}
Recall that the generating series $\mathsf{G}_{S,\boldsymbol{a}}$ only contributes to the vertical generating series \eqref{eqn:Laaeqn} when $a_i K_S = a_i^2$. So  two universal series entering with exponents $a_iK_S$ and $a_i^2$ can be combined. Hence we define
\begin{align} \label{eqn:defbarCij}
\overline{C}_{ii} := \overline{E}_i \overline{E}_{ii}, \quad \overline{C}_{ij} := \overline{E}_{ij},
\end{align}
where in the second definition $i<j$. We furthermore incorporate the constant terms of \eqref{def:Upsiloncst} and define
\begin{align}
\begin{split} \label{eqn:defCij}
A &:= \frac{(-1)^{r-1}}{[r]_y (y^{\frac{1}{2}} - y^{-\frac{1}{2}})}   q^{-\frac{r}{2}} \overline{A}, \\
B &:=  q^{\frac{r}{24}} \overline{B}, \\
C_{ii} &:= \binom{r}{i}_{y}^{-1} q^{-\frac{i(r-i)}{2r}} \overline{C}_{ii}, \\
C_{ij} &:=  \frac{[j]_{y}[r-i]_{y}}{[j-i]_{y}[r]_{y}} q^{-\frac{i(r-j)}{r}} \overline{C}_{ij}, 
\end{split}
\end{align}
and Theorem \ref{thm:Laarakker} follows. Recall that an explicit formula \eqref{eqn:A} for the universal series $A$ was computed in \cite[Section 5.4]{Tho} and \cite[Theorem C]{Laa1}. Then for all $S,a_i$ satisfying $a_i^2 = a_i K_S$, we have
\begin{equation} \label{eqn:final}
q^{-\frac{r\chi(\O_S)}{2} + \frac{r K_S^2}{24} + Q(\boldsymbol{a})} \Upsilon_{S,\boldsymbol{a}} \mathsf{G}_{S,\boldsymbol{a}} = A^{\chi(\O_S)} B^{K_S^2} \prod_{i \leq j} C_{ij}^{a_i a_j}.
\end{equation}

\subsection{Reduction to $\Hilb^{\bfn}(S)$ for toric $S$} \label{sec:HilbToric}

Equation \eqref{eqn:univdef} determines the universal series $\overline{A}$, $\overline{B}$, $\overline{E}_i$, $\overline{E}_{ij}$, and thus $A$, $B$, $C_{ij}$, by evaluation on any collection of pairs $(S,\boldsymbol{a})$ for which the vectors 
$$
(\chi(\O_S), K_S^2, \{a_i K_S\}_i, \{a_ia_j\}_{i \leq j})\in \Q^{2+r-1+\binom{r}{2}}
$$
form a $\Q$-spanning set. 

In particular, one can take a collection for which all $S$ are toric surfaces equipped with $T$-action and $\boldsymbol{a} \in H^2(S,\Z)^{r-1}$ are $T$-equivariant divisors. The $T$-action lifts to the Hilbert scheme $\Hilb^n(S)$ and the products $\Hilb^{\boldsymbol{n}}(S)$.

Recall the notation for smooth projective toric surfaces from Section \ref{sec:intro2}. In particular $\{U_{\alpha}\}_{\alpha=1}^{e(S)}$ denotes  a collection of toric charts, each centered at one of the $e(S)$ points of the fixed locus $S^T$. The torus $T$ acts on $U_{\alpha}$ by scaling the coordinate axes with $T$-weights $(w_{\alpha})_{1}$ and $(w_{\alpha})_2$. 

\subsubsection{} The definitions \eqref{def:T[n]} \eqref{def:E[n]} \eqref{def:T0[n]}, \eqref{eqn:defGS} and \eqref{eqn:defUps} of the localized K-theory classes $\Upsilon_{S,\bfa,n}$ and the series $\mathsf{G}_{S,\boldsymbol{a}}$ can be further  extended verbatim to the setting where $S$ is a finite union of toric charts $U_{\alpha}$, 
the $a_i\in H^2_T(S,\Z)$ are $T$-equivariant classes, and K-theory classes are regarded $T$-equivariantly. In particular, $S$ need not be projective.

In this setting, $$\mathsf{G}_{S,\boldsymbol{a}}\in \Q(y^{\frac{1}{2}},t_1,t_2)[[q]],$$
where the Euler characteristics $\chi(\Hilb^{\bfn}(S),\Upsilon_{S,{\bfa},{\bfn}})$ are \emph{defined} by K-theoretic equivariant localization; that is \begin{align}\label{eq:Gloc} \chi(\Hilb^{\bfn}(S),\Upsilon_{S,{\bfa},{\bfn}}):=&\sum_{p\in \Hilb^{\bfn}(S)^T} \chi\Big(p, \frac{\Upsilon_{S,\bfa,\bfn}|_{p}}{\Lambda_{-1}(\Omega_{\Hilb^{\bfn}(S)}|_p)}\Big) \\ \nonumber =&\sum_{p\in \Hilb^{\bfn}(S)^T} \chi\Big(p, \frac{(\det(\Omega_0^{[\bfn]}|_p))^{\frac{1}{2}}}{\Lambda_{-1}(\Omega_0^{[\bfn]}|_p)}\Big)\in \Q(y^{\frac{1}{2}},t_1,t_2).
\end{align}
This expression is well-defined by \cite[Lemma 6]{CO}, which implies that for any $p\in \Hilb^{\bfn}(S)^T$ and any $i$, the rank $n_{i-1}+n_{i}$ character $$\chi \Big(p,\big(R\pi_*\O(\beta_i) - R \hom_{\pi}(\I_{i-1},\I_i(\beta_i))  \big)\big|_{p} \Big)$$ is a sum of $T$-weights with positive coefficients. 
For projective toric $S$, K-theoretic equivariant localization (\cite[Theorem 3.5]{Thom}) implies that the specialization at $t_1=t_2=1$ of the the right-hand side of \eqref{eq:Gloc} agrees with $\chi(\Hilb^n(S),\Upsilon_{S,\bfa,\bfn})$, as defined in \eqref{eqn:defUps}.

Lemma \ref{lem:Mult} also holds in the setting where each of $S'$ and $S''$ is either a toric chart or a blow-up of a toric chart at its torus fixed point.

\subsection{Reduction to $\Hilb^{\bfn}(\C^2)$} \label{sec:HilbC^2}

If $S$ is a toric smooth projective surface   
with toric charts $\{U_{\alpha}\}_{\alpha=1}^{e(S)}$, then there is a natural bijection between the $T$-fixed points of $\Hilb^{\bfn}(S)$ and $\Hilb^{\bfn}(\sqcup_{\alpha} U_{\alpha})$, under which the corresponding summands of the right-hand sides of \eqref{eq:Gloc} for $\Hilb^{\bfn}(S)$ and $\Hilb^{\bfn}(\sqcup_{\alpha} U_{\alpha})$ coincide.  We conclude that 
$$\mathsf{G}_{S,\bfa}=\big(\mathsf{G}_{\sqcup_{\alpha} U_{\alpha},\ \sum_{\alpha} \bfa|_{U_{\alpha}}}\big)\Big|_{t_1=t_2=1}.$$

Applying Lemma \ref{lem:Mult}, we obtain the following lemma. 

\begin{lemma} \label{lem:prod} 
If $S$ is a toric smooth projective surface and $\bfa\in H^2(S,\Z)^{r-1}$ is a collection of $T$-equivariant divisors, then there is a factorization
$$\mathsf{G}_{S,\bfa}=\Bigg( \prod_{\alpha=1}^{e(S)} \mathsf{G}_{U_{\alpha},\ \bfa|_{U_{\alpha}} } \Bigg)\Bigg|_{t_1=t_2=1}.$$
\end{lemma}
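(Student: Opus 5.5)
The plan is to combine the identity preceding the lemma with the multiplicativity of Lemma \ref{lem:Mult} in its equivariant version. As shown just before the statement, the torus-fixed points of $\Hilb^{\bfn}(S)$ and of $\Hilb^{\bfn}(\sqcup_\alpha U_\alpha)$ are in natural bijection. Moreover, the localization summands in \eqref{eq:Gloc} agree term by term. This gives
$$\mathsf{G}_{S,\bfa}=\big(\mathsf{G}_{\sqcup_{\alpha} U_{\alpha},\ \sum_{\alpha} \bfa|_{U_{\alpha}}}\big)\Big|_{t_1=t_2=1}.$$
I would first recheck the term-by-term claim. A $T$-fixed point of $\Hilb^{\bfn}(S)$ is a tuple of monomial subschemes supported at the fixed points of $S$. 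Write $Z_i=\sqcup_\alpha Z_i^{(\alpha)}$ with $Z_i^{(\alpha)}\subset U_\alpha$.

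The class $T^{[\bfn]}_0|_p$ is built from $R\hom_\pi(\I_i\otimes L_i,\I_j\otimes L_j\otimes K_S^{\epsilon})$ with the $\bfn=\boldsymbol 0$ term subtracted. By the standard decomposition
$$R\Hom(I_Z,I_W)-R\Hom(\O,\O)=\sum_{\alpha}\big(R\Hom(I_{Z^{(\alpha)}},I_{W^{(\alpha)}})-R\Hom(\O,\O)\big)\big|_{U_\alpha},$$
twisted by the characters $L_i^{(\alpha)}$ and $K_S^{(\alpha)}$, this normalized class is a sum over $\alpha$ of local contributions. Each local contribution is the corresponding normalized class for $U_\alpha$ with $\bfa|_{U_\alpha}$, which is computed by \v{C}ech localization to the fixed points. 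The same holds for $\Omega_0^{[\bfn]}$.

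Next, $\Lambda_{-1}$ and $\det$ turn sums into products, so each summand of \eqref{eq:Gloc} factors as a product over $\alpha$ of local summands. The choice of square root $\det(y^{-1/2}R\hom_\pi(\E^{[\bfn]},\E^{[\bfn]})_0)$ is compatible with this factorization, since it is defined via the same additive class.

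Summing over $\bfn=\sum_\alpha \bfn^{(\alpha)}$ with the weight $q^{|\bfn|}$ then produces the product of the generating series. Equivalently, one applies the equivariant version of Lemma \ref{lem:Mult} for toric charts, inductively on the disjoint union $\sqcup_\alpha U_\alpha$. Finally, the specialization $t_1=t_2=1$ commutes with taking the product over $\alpha$ of the coefficients. The product of the local series has coefficients that are Laurent polynomials in $t_1,t_2$, because they equal the equivariant Euler characteristics on the projective $S$ by \cite[Theorem 3.5]{Thom}. So the specialization is legitimate, even though the individual factors have poles at $t_1=t_2=1$.

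The main obstacle I expect is this last point together with the additivity of the normalized classes. One must ensure that subtracting $T^{[\boldsymbol 0]}$ globally corresponds to subtracting the local $\bfn=\boldsymbol 0$ terms chart by chart. This holds because $R\Hom$ between ideal sheaves differs from $R\Hom(\O,\O)$ by terms supported at the points. One must also check that the global term on $S$ is itself not localized to the charts, so that no global cohomology terms survive. In addition, one should note that the local series are only rational functions, and that the identity is an equality before specialization, with the specialization applied afterwards.
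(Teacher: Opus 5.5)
Your proposal is correct and follows the paper's argument. The paper uses the bijection between $T$-fixed points of $\Hilb^{\bfn}(S)$ and $\Hilb^{\bfn}(\sqcup_\alpha U_\alpha)$, with matching localization summands, to get $\mathsf{G}_{S,\bfa}=(\mathsf{G}_{\sqcup_\alpha U_\alpha,\sum_\alpha\bfa|_{U_\alpha}})|_{t_1=t_2=1}$, and then splits this into the product via the equivariant form of Lemma~\ref{lem:Mult}. Your chart-by-chart decomposition of $T^{[\bfn]}_0$ fills in the summand matching that the paper only asserts.

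One small precision: the global coefficients are in general rational functions in $t_1,t_2,y^{\frac{1}{2}}$ that are regular at $t_1=t_2=1$, not Laurent polynomials, since factors $1/(1-y^k t_1^{a}t_2^{b})$ with $k\neq 0$ need not cancel. Regularity at $t_1=t_2=1$ is all the specialization requires.
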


In Section \ref{sec:framedtoVW}, we explain how to write each factor $\mathsf{G}_{U_{\alpha},\ \bfa|_{U_{\alpha}}}$ in terms of $M_{\PP^2}(r,n)$.

Similar statements hold in the non-compact setting. The following will be relevant. Let $\widehat{\C}^2$ denote $\C^2$ blown up at the origin, with induced $T$-action and cover $\widehat{\C}^2$ by toric charts $U_1$ and $U_2$ centered at the $T$-fixed points, so that {on $U_1$ the coordinate directions are scaled by $t_1$ and $t_1^{-1}t_2$, and on $U_2$ by $t_1t_2^{-1}$ and $t_2$ (see Figure \ref{fig}).}
 
\begin{figure}

\begin{tikzpicture}[
  line width=1.2pt,
  >=Stealth
]

\def\L{2}

\coordinate (L) at (-\L,0);
\coordinate (R) at ( \L,0);

\draw[
  postaction={decorate,
    decoration={markings,
      mark=at position 0.5 with {\arrow{Stealth}}
    }
  }
]
(L) -- (0,0)
node[midway,below=4pt] {$t_1^{-1}t_2$};

\draw[
  postaction={decorate,
    decoration={markings,
      mark=at position 0.5 with {\arrow{Stealth}}
    }
  }
]
(R) -- (0,0)
node[midway,below=4pt] {$t_1 t_2^{-1}$};

\draw[
  postaction={decorate,
    decoration={markings,
      mark=at position 0.5 with {\arrow{Stealth}}
    }
  }
]
(L) -- ++(120:\L)
node[midway,left=4pt] {$t_1$};

\draw[
  postaction={decorate,
    decoration={markings,
      mark=at position 0.5 with {\arrow{Stealth}}
    }
  }
]
(R) -- ++(60:\L)
node[midway,right=4pt] {$t_2$};

\node[below left=12pt] at (L) {$U_1$};
\node[below right=12pt] at (R) {$U_2$};

\end{tikzpicture}
\caption{The two toric charts of $\widehat{\C}^2$.}
\label{fig}
\end{figure}
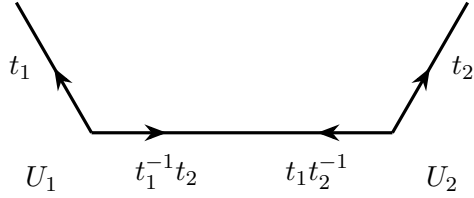

Then the argument of Lemma \ref{lem:prod} also implies that 
\begin{align}\label{eq:GBlfact} \mathsf{G}_{\widehat{\C}^2,\bfa}=\mathsf{G}_{U_1, \bfa|_{U_1}}\cdot \mathsf{G}_{U_2,\bfa|_{U_2}}.
\end{align}

\section{Moduli of framed sheaves} \label{sec:framed}

In this section, we recall the requisite background on moduli spaces of framed sheaves on $\PP^2$ and on the blow up $\widehat{\PP}^2$ of $\PP^2$ at a point. We then state the identities for $\chi_{-y}$-genera that will be applied to Vafa-Witten invariants in Section \ref{sec:framedtoVW}.

\subsection{Framed sheaves on $\PP^2$}\label{sec:MP2}

We consider the moduli spaces $M_{\PP^2}(r,n)$ of framed sheaves from the introduction. We first recall some facts that may be found in \cite{NY1}, for example. The moduli space $M_{\PP^2}(r,n)$ can be equipped with a torus action $\mathbb{T} = T_1 \times T_2 \times T_3 = (\C^*)^{2+r+1}$, where $T_3$ acts trivially. Explicitly, the action of $T_1$ on $\PP^2$ is given by
$$
(t_1,t_2) \cdot [x_0:x_1:x_2] \mapsto [x_0:t_1x_1:t_2x_2]
$$ 
which induces an automorphism $F_{t_1,t_2} \colon \PP^2 \to \PP^2$ for each $(t_1,t_2) \in T_1$. The torus $T_2$ acts on the framing. Then, on closed points, the action of $T_1 \times T_2$ on $M_{\PP^2}(r,n)$ is given by
$$
(t_1,t_2,e_0, \ldots, e_{r-1}) \cdot (E, \Phi) = ((F^{-1}_{t_1,t_2})^* E, \Phi_{(t_1,t_2,e_0, \ldots, e_{r-1})}),
$$
where $\Phi_{(t_1,t_2,e_0, \ldots, e_{r-1})}$ denotes the composition
$$
(F^{-1}_{t_1,t_2})^* E|_{\ell_{\infty}} \stackrel{(F^{-1}_{t_1,t_2})^* \Phi}{\longrightarrow} (F^{-1}_{t_1,t_2})^* \O_{\ell_{\infty}}^{\oplus r} \stackrel{\cong}{\longrightarrow} \O_{\ell_\infty}^{\oplus r} \stackrel{(e_0, \ldots, e_{r-1})}{\longrightarrow} \O_{\ell_\infty}^{\oplus r},
$$
where the middle arrow is induced by the natural $T_1$-equivariant structure on $\O_{\ell_{\infty}}$ and the second arrow is matrix multiplication by $\mathrm{diag}(e_0,\ldots,e_{r-1})$. 

There is a $T_1$-equivariant isomorphism $M_{\PP^2}(1,n)\cong \Hilb^n(\C^2)$. Namely,  elements of $M_{\PP^2}(1,n)$ are of the form $(I_Z,\iota_{Z}|_{\ell_{\infty}}),$ where $I_Z$ is an ideal sheaf of a 0-dimensional subscheme $Z$ supported on $\C^2\subset \PP^2$ and $\iota_{Z}\colon I_{Z}\subset \O_{\PP^2}$ is the inclusion.

\subsection{Framed sheaves on $\widehat{\PP}^2$}

We next consider the moduli spaces of framed sheaves $M_{\widehat{\PP}^2}(r,\ell,n)$ from the introduction and recall facts from \cite{NY1}. Consider the blow-up
$$
\widehat{\PP}^2 = \{ ([z_0:z_1:z_2], [z:w]) \in \PP^2 \times \PP^1 \, : \, z_1 w = z_2 z\}.
$$
Then the exceptional divisor $C:= \{[1:0:0]\} \times \PP^1$ is disjoint from the line $\ell_\infty = Z(z_0)$ at infinity. We write the action of $T_1$ on $\widehat{\PP}^2$ as
$$
(t_1,t_2) \cdot ([z_0:z_1:z_2], [z:w]) = ([z_0:t_1z_1:t_2z_2], [t_1z:t_2w]).
$$
We obtain an induced action of $\mathbb{T} = T_1 \times T_2 \times T_3 = (\C^*)^{2+r+1}$ on $M_{\widehat{\PP}^2}(r,\ell,n)$. We also use the notation $T = T_1$ and $T_3 = \mathbb{C}^*_y$ for the first and third factors.
\newline

For any $\ell\in \Z$,  one has a {$T$-equivariant} isomorphism $M_{\widehat{\PP}^2}(1,\ell,n)\cong \Hilb^n(\widehat{\C}^2)$. Namely, elements of $M_{\widehat{\PP}^2}(1,\ell,n)$ are of the form $(I_Z(\ell C),\iota_{Z}|_{\ell_{\infty}}),$ where $I_Z$ is an ideal sheaf of a 0-dimensional subscheme $Z$ supported on $\widehat{\C}^2\subset \widehat{\PP}^2$ and $\iota_{Z}\colon I_{Z}\subset \O_{\widehat{\PP}^2}$ is the inclusion.

\subsection{The $\chi_{-y}$-genus}

Recall that for a smooth projective variety $X$, the classical (symmetrized) Hirzebruch $\chi_{-y}$ genus is defined by 
$$
\widehat{\chi}_{-y}(X) = y^{-\frac{\dim(X)}{2}} \chi(X,\Lambda_{-y} \Omega_X).
$$

Similarly, the (symmetrized) $\chi_{-y}$ genus of the smooth quasi-projective variety $M:=M_{\PP^2}(r,n)$ is defined as 
\begin{align}\label{def:eqchiy}
\widehat{\chi}_{-y}(M) = y^{-\frac{\dim(M)}{2}} \chi(M,\Lambda_{-y} \Omega_M) \in \Q(t_1,t_2,e_0,\ldots,e_{r-1})[y^{\pm \frac{1}{2}}].\end{align}

Here, the right-hand side records the virtual $\T$-characters of \begin{align}\label{def:chiyloc}\chi(M,\Omega^k_M)=\sum_{i} (-1)^{i} H^i(M,\Omega^k_M).\end{align} By \cite[Lemma 4.2]{NY2}, the $\T$-weight spaces of all cohomology groups $H^i(M,\Omega^k_M)$ are finite dimensional. The $\T$-character of each $\chi(M,\Omega^k_M)$ is therefore well-defined as a Laurent series in the variables of $\T$.

Setting $\widehat{M}=M_{\widehat{\PP}^2}(r,\ell,n)$, the symmetrized $\chi_{-y}$-genus is defined as $$\widehat{\chi}_{-y}(\widehat{M})= y^{-\frac{\dim(\widehat{M})}{2}}\sum_{k} (-y)^{k} \chi(\widehat{M},\Omega^k_{\widehat{M}})\in \Q(t_1,t_2,e_0,\ldots,e_{r-1})[y^{\pm \frac{1}{2}}].$$ The $\T$-character $\chi(\widehat{M},\Omega^k_{\widehat{M}})$  is well-defined as a Laurent series by \cite[Theorem 3.4]{NY1} and \cite[Lemma 4.2]{NY2}. 

\subsection{$T_2$-fixed locus of $M$} 
To make contact with \eqref{eqn:defGS}, we write $\widehat{\chi}_{-y}(M)$ as a holomorphic Euler characteristic over its $T_2$-fixed loci. 

\subsubsection{}\label{sec:T2fixedM}

One has 
\begin{align*}
M_{\PP^2}(r,n)^{T_2}=\bigsqcup_{n_0+\cdots+n_{r-1}=n} \prod_{i=0}^{r-1} M_{\PP^2}(1,n_i)&=\bigsqcup_{n_0+\cdots+n_{r-1}=n} \prod_{i=0}^{r-1} \Hilb^{n_i}(\C^2)\\&=\bigsqcup_{|\bfn|=n}\Hilb^{\bfn}(\C^2).
\end{align*}

Let $\mathcal{F}$ denote the universal sheaf on $ \PP^2\times M_{\PP^2}(r,n)$. If $\pi$ is the projection onto the second factor, then the Kodaira-Spencer isomorphism implies that  \begin{align}\label{eq:TMrn} T_{M_{\PP^2}(r,n)}=R\hom_{\pi}(\mathcal{F},\mathcal{F}(-\ell_{\infty}))[1].\end{align}

We let $\mathcal{I}_i$ denote the pullback to $\C^2\times \Hilb^{\bfn}(\C^2)$ of the universal (ideal) sheaf on $\C^2\times \Hilb^{n_i}(\C^2)$. Considering $\C^2\times \Hilb^{\bfn}(\C^2)\subset \PP^2 \times M_{\PP^2}(r,n)$, we have $$\mathcal{F}|_{\C^2\times \Hilb^{\bfn}(\C^2)}\cong \bigoplus_{i=0}^{r-1} \mathcal{I}_i \cdot e_i. $$
Letting $\pi$ now denote the projection from $\C^2\times \Hilb^{\bfn}(\C^2)$ to the second factor, we conclude that 
\begin{align}\label{eq:TMres}T_{M_{\PP^2}(r,n)}|_{\Hilb^{\bfn}(\C^2)}=\bigoplus_{i,j=0}^{r-1}\big(R\pi_*\O -R\hom_{\pi}(\mathcal{I}_i,\mathcal{I}_j)\big)\cdot \frac{e_j}{e_i}\in K_{\T}^0(\Hilb^\bfn\C^2).\end{align}
The equality \eqref{eq:TMres} also follows from the description of $T_{M_{\PP^2}(r,n)}$ in terms of tautological bundles given, for example, in \cite[Equation (6.19)]{Neg}.

We set \begin{align}\label{def:V1def} V_1^{[\bfn]}:= T_{M_{\PP^2}(r,n)}|_{\Hilb^{\bfn}(\C^2)}. \end{align}
In particular, $$T_{\Hilb^{\bfn}(\C^2)}=\bigoplus_{i=0}^{r-1} \big( R\pi_*\O -R\hom_{\pi}(\mathcal{I}_i,\mathcal{I}_i) \big)=(V_1^{[\bfn]})^{f},$$ where $(-)^f$ denotes the $T_2$-fixed piece.

Set \begin{align}\label{def: Vdef}
V^{[\bfn]}=(1-y^{-1})\cdot V_1^{[\bfn]}\in K^0_{\mathbb{T}}(\Hilb^{\bfn}(\C^2)).\end{align} Applying K-theoretic equivariant localization (\cite[Theorem 3.5]{Thom}) to the action of $T_2$ on $M_{\PP^2}(r,n)$, we obtain \begin{align}\label{eq:chiyloc}\widehat{\chi}_{-y}(M_{\PP^2}(r,n))=\sum_{|{\bfn}|=n}\chi\Big(\Hilb^{\bfn}(\C^2), \frac{\Lambda_{-y}(\Omega_{\Hilb^{\bfn}(\C^2)})}{\Lambda_{-1}((V^{[\bfn]})^{\vee})^{m}}\otimes \det(V^{[\bfn]})^{-\frac{1}{2}}\Big),\end{align}
where $(-)^m$ denotes the $T_2$-moving part.
\newline

\subsection{Wall-crossing identities}

We state three identities for $\chi_{-y}(M_{\PP^2}(r,n))$ that will be applied in the next section to constrain Vafa-Witten invariants.

Let $\mathfrak{S}_r$ denote the symmetric group on $r$ elements $\{0, \ldots, r-1\}$.
\begin{theorem}\label{thm:symmNek}
We have
\begin{enumerate}
\item[$(1)$] $\widehat{\chi}_{-y}(M_{\PP^2}(r,n)) \Big|_{(t_1,t_2,e_{\sigma(0)}.\ldots, e_{\sigma(r-1)})} = \widehat{\chi}_{-y}(M_{\PP^2}(r,n))$ for all $\sigma \in \mathfrak{S}_r$,
\item[$(2)$] $\widehat{\chi}_{-y}(M_{\PP^2}(r,n)) \Big|_{(t_1,t_2,e_0^{-1},\ldots, e_{r-1}^{-1})} = \widehat{\chi}_{-y}(M_{\PP^2}(r,n))$.
\item[$(3)$] $\mathrm{[Kuhn\textrm{-}Leigh\textrm{-}Tanaka]}$
For any $\ell \in \Z$, we have
$$
\sum_{n} \widehat{\chi}_{-y}(M_{\widehat{\PP}^2}(r,\ell,n)) q^{n} =  \frac{\Theta_{A_{r-1},\ell}}{\overline{\eta}^r} \sum_{n} \widehat{\chi}_{-y}(M_{\PP^2}(r,n)) q^n.
$$
\end{enumerate}
\end{theorem}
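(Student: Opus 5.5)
Part (3) is exactly Theorem \ref{thm:KLT}, so nothing new is needed there; the work is in (1) and (2). For (1), I will use the $\GL_r$-symmetry of the ADHM description. The framing torus $T_2\subset \GL_r$ acts on $M_{\PP^2}(r,n)=M(r,n)$ through the full group $\GL_r$ acting on the framing $\O_{\ell_\infty}^{\oplus r}$. A permutation matrix $P_\sigma\in \GL_r$ normalizes $T_2$, and acting by it gives an automorphism of $M(r,n)$. This automorphism intertwines the $\T$-action with its twist by $\sigma$ on the $e_i$, and it commutes with the $T_1\times T_3$ action. Because it is an isomorphism of varieties compatible with $\Omega_M$, the $\T$-characters of $H^i(M,\Omega^k_M)$ get permuted as $e_i\mapsto e_{\sigma(i)}$. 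Summing over $i,k$ gives (1). Equivalently, one can check (1) directly on the localization formula \eqref{eq:chiyloc}: permuting $\bfn$ and the $e_i$ simultaneously preserves the summands, since \eqref{eq:TMres} is symmetric under this operation.

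For (2), I will pass through the co-stable moduli space and use Theorem \ref{thm:stcostintro}. First I identify $M^c(r,n)$ with $M(r,n)$ up to a twist of the torus action. On ADHM data $(B_1,B_2,i,j)$, the transpose map
\[
(B_1,B_2,i,j)\mapsto (B_1^t,B_2^t,j^t,i^t)
\]
exchanges stability and co-stability, because the conditions are dual: no proper $B$-invariant subspace containing $\mathrm{im}(i)$, versus no nonzero $B$-invariant subspace inside $\ker(j)$. It also preserves the moment map equation up to sign, and a sign can be absorbed by rescaling. Under this isomorphism $M(r,n)\cong M^c(r,n)$, the framing torus weights $e_i$ get inverted, while $t_1,t_2$ stay fixed or are altered in a way I can undo. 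I need to track the weight of the symplectic form, since $[B_1,B_2]+ij$ has weight $t_1t_2$; the transposed $i,j$ pick up factors of $t_1t_2$, which I absorb by rescaling the $e_i$ uniformly. Any leftover overall scalar factor on the $e_i$ acts trivially, because $\widehat\chi_{-y}$ depends only on the ratios $e_j/e_i$ by \eqref{eq:TMres}.

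Combining these, $\chi(M^c,\Omega^k)$ equals $\chi(M,\Omega^k)$ with $e_i\mapsto e_i^{-1}$, possibly composed with a permutation handled by (1). Theorem \ref{thm:stcostintro} says $\chi(M,\Omega^k)=\chi(M^c,\Omega^k)$, so taking $\sum_k(-y)^k$ and normalizing by $y^{-rn}$ yields (2).

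I expect the main obstacle to be the weight bookkeeping in the transpose isomorphism: verifying that it is $\T$-equivariant after the substitution $e_i\mapsto e_i^{-1}$ with $t_1,t_2$ unchanged. If transposition instead produces $t_i\mapsto t_i^{-1}$, I will compose with the duality $(B_1,B_2)\mapsto(B_2,B_1)$, using the symmetry $t_1\leftrightarrow t_2$ (itself an automorphism exchanging the coordinates on $\PP^2$), or use a Serre-duality type symmetry of $\chi_{-y}$ for holomorphic symplectic varieties. As a safety check, I will verify (2) in the case $r=1$, where $e_0$ cancels, and for small $n$ using \eqref{eq:chiyloc}.
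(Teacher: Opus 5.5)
Your proposal is correct and follows the paper's proof: (1) by permuting the summands of \eqref{eq:chiyloc} (your $\GL_r$-normalizer argument is an equivalent variant); (2) by the transpose isomorphism $(B_1,B_2,i,j)\mapsto(B_1^t,B_2^t,-j^t,i^t)$ from $M(r,n)$ to $M^c(r,n)$ together with Theorem~\ref{thm:stcostintro}; and (3) by Kuhn--Leigh--Tanaka. The transpose map fixes $t_1,t_2$ and sends $e_i\mapsto e_i^{-1}(t_1t_2)^{-1}$, and the factor $t_1t_2$ is harmless because the diagonal subtorus acts trivially, so none of your fallbacks are needed.

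Be aware that, as the paper itself says, (2) is equivalent to Theorem~\ref{thm:stcostintro}, and you invoke that theorem as a black box. All the genuine content of (2) therefore lies in the paper's mixed Hodge module proof of that theorem. That proof compares $q_*[\Omega^k_M]$ and $q^c_*[\Omega^k_{M^c}]$ in $K_0^{\T}(M^0)$ using $\Gr^F_{-k}\DR$ and the isomorphism $q_*\Q^{\vir}_M\cong q^c_*\Q^{\vir}_{M^c}$ coming from the BPS sheaf.
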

\begin{proof}
By \eqref{def: Vdef}, the substitution  $$t_i\mapsto t_i, \  \ e_{j}\mapsto e_{\sigma(j)}$$ sends the $(n_0,\ldots,n_{r-1})$-summand of the right-hand side of \eqref{eq:chiyloc} to the \\ $(n_{\sigma^{-1}(0)},\ldots,n_{\sigma^{-1}(r-1)})$-summand. This substitution thus permutes the summands of the right-hand side of \eqref{eq:chiyloc}. Statement (1) follows.

The identity (2) is new and will be proved in Section \ref{sec:stcost}. In particular, in contrast to identity (1), we do not see how to obtain identity (2) from a bijection on components of some fixed locus of a torus action on ${M_{\PP^2}(r,n)}$ when $r>2$.  

The identity (3) is \cite[Theorem 1.4]{KLT}.  
\end{proof}

\subsection{$T_2$-fixed locus of $\widehat{M}$}

\subsubsection{}
We formulate an analog of \eqref{eq:chiyloc} for $\widehat{M}.$
Given a vector ${\boldsymbol{\ell}}=(\ell_0,\ldots,\ell_{r-1})\in \Z^{r}$ with $|\boldsymbol{\ell}|=\ell$, we set the constant  $$D_{\boldsymbol{\ell}}:= \frac{1}{2r}\sum_{i<j}(\ell_i-\ell_j)^2\in\Q.$$

As in Section \ref{sec:T2fixedM}, there is a decomposition 
\begin{align*}
M_{\widehat{\PP}^2}(r,\ell,n)^{T_2}&=\bigsqcup_{|\boldsymbol{\ell}|=\ell }\ \bigsqcup_{|\bfn|=n-D_{\boldsymbol{\ell}}} \prod_{i=0}^{r-1} M_{\widehat{\PP}^2}(1,\ell_i,n_i)\\&\cong \bigsqcup_{|\boldsymbol{\ell}|=\ell}\ \bigsqcup_{|\bfn|=n-D_{\boldsymbol{\ell}}} \Hilb^{\bfn}_{\bfell}(\widehat{\C}^2),
\end{align*}
where $\Hilb^{\bfn}_{\bfell}(\widehat{\C}^2):=\Hilb^{\bfn}(\widehat{\C}^2).$ Here the subscript $\bfell$ is used to index the component as well as keep track of the $\T$-action.

Let $\boldsymbol{\ell}, {\bfn}$ be such that  $|\boldsymbol{\ell}|=\ell$ and and $|\bfn|=n-D_{\boldsymbol{\ell}}$.  We let ${\mathcal{J}}_i$ denote the pullback of the universal sheaf on the $i$-th factor $\widehat{\C}^2\times \Hilb^{n_i}(\widehat{\C}^2).$

Set 
\begin{align*} W_{1,\bfell}^{[\bfn]}&:=T_{M_{\widehat{\PP}^2}(r,\ell,n)}|_{\Hilb^{\bfn}_{\boldsymbol{\ell}}(\widehat{\C}^2)}\\&=\bigoplus_{i,j=0}^{r-1} \big(R\pi_* \O -R\hom_{\pi}(\mathcal{J}_i\boxtimes\O(\ell_iC),\mathcal{J}_j\boxtimes\O(\ell_jC)\big) \big)\cdot \frac{e_j}{e_i}, \end{align*}

and 
\begin{align*}W^{[\bfn]}_{\bfell}&:=(1-y^{-1})\cdot W_{1,\bfell}^{[\bfn]}\in K^0_{\T}(\Hilb^{\bfn}_{\bfell}(\widehat{\C}^2)).\end{align*}
Then $$T_{\Hilb^{\bfn}_{\bfell}(\widehat{\C}^2)}=(W_{1,\boldsymbol{\bfell}}^{[\bfn]})^{f},$$ where $(-)^f$ denotes the $T_2$-fixed piece, and, as in \eqref{eq:chiyloc} we have 

\begin{align}\label{eq:chiyblloc}
\widehat{\chi}_{-y}(M_{\widehat{\PP}^2}(r,\ell,n))=\sum_{\begin{subarray}{c} |\bfell|=\ell, \\ |\bfn|=n-D_{\bfell}\end{subarray}}\chi\Big(\Hilb^{\bfn}_{\bfell}(\widehat{\C}^2), \frac{\Lambda_{-y}(\Omega_{\Hilb^{\bfn}_{\bfell}(\widehat{\C}^2)})}{\Lambda_{-1}\big((W^{[\bfn]}_{\bfell})^{\vee}\big)^{m}}\otimes \det(W^{[\bfn]}_{\bfell})^{-\frac{1}{2}}\Big),
\end{align}
where $(-)^{m}$ denotes the $T_2$-moving part. 

For any fixed $\ell$ and $n$, there are only finitely many $\bfell$ with $|\bfell|=\ell$ satisfying $D_{\bfell}\leq n.$ So, the sum on right-hand side of \eqref{eq:chiyblloc} is finite.

\subsubsection{}

This latter expression can be normalized along the lines of \eqref{def:T0[n]}. Set 
$$S_{{\boldsymbol\ell}}=T_{M_{\widehat{\PP}^2}(r,\ell,D_{\bfell})}|_{\Hilb^{\boldsymbol{0}}_{\boldsymbol{\ell}}(\widehat{\C}^2)} \in K^0_{\T}(\Hilb^{\boldsymbol{0}}_{\boldsymbol{\ell}}(\widehat{\C}^2))=K^0_{\mathbb{T}}(\pt)=\Z[t^{\pm 1}_1,t^{\pm 1}_2,e_i^{\pm 1},y^{\pm 1}].$$
Explicitly,
\begin{align*}S_{\boldsymbol{\ell}}&=\sum_{i,j=0}^{r-1} \chi\Big(\widehat{\C}^2, \O-\O\big((\ell_j-\ell_i)C\big)\Big) \cdot \frac{e_j}{e_i}.\end{align*}

There is a formula for $S_{\bfell}$ due to Nakajima-Yoshioka \cite[Theorem 3.6]{NY1}
\begin{align}\label{eqn:NYlbdef}
S_{\boldsymbol{\ell}} = \sum_{i,j=0}^{r-1} \frac{e_j}{e_i} \cdot \left\{\begin{array}{cc} \sum_{m,n \geq 0 \atop m+n \leq \ell_i -\ell_j - 1} t_1^{-m} t_2^{-n} & \mathrm{if \ } \ell_i - \ell_j > 0 \\  \sum_{m,n \geq 0 \atop m+n \leq  \ell_j -\ell_i - 2} t_1^{m+1} t_2^{n+1} & \mathrm{if \ } 
 \ell_i - \ell_j < -1 \\ 0 & \mathrm{otherwise.} \end{array} \right.
\end{align}
In particular, $$\mathrm{rk}(S_{\bfell})=\sum_{i<j}(\ell_i-\ell_j)^2=2rD_{\bfell}.$$

Along the lines of \eqref{def:Upsiloncstorigin}, we define 

\begin{align*}\Upsilon^{\circ}_{\bfell}&:=\chi\Big(\pt,\frac{\det(y^{1/2}S^{\vee}_{\bfell})}{\Lambda_{-1}(S_{\bfell}^{\vee}-y^{-1}S_{\bfell})}\Big)(-1)^{\sum_{i}(r-1)i(\ell_{i-1}-\ell_{i})^2}\\&=(-1)^{2rD_{\bfell}+\sum_{i}(r-i)i(\ell_{i-1}-\ell_{i})^2}y^{-rD_{\bfell}}\frac{\Lambda_{-1}(S_{\bfell}^{\vee}\otimes y)}{\Lambda_{-1} S_{\bfell}^{\vee}}\\&=y^{-rD_{\bfell}}\frac{\Lambda_{-y}S_{\bfell}^{\vee}}{\Lambda_{-1} S_{\bfell}^{\vee}}.\end{align*}

\subsubsection{}

We will use the following combinatorial lemma. 
\begin{lemma} \label{lem:comb}
Let $\boldsymbol{\ell} = (\ell_0, \ldots, \ell_{r-1}) \in \mathbb{Z}^r$ such that $\sum_i \ell_i = \ell$.
Then the specialization
\begin{align}\label{def:Sspec}
\Upsilon^{\circ}_{\bfell}|_{t_1=t_2=1,\ e_i=y^{-i}}
\end{align}
is well-defined. Moreover, the specialization \eqref{def:Sspec} is equal to zero unless $\epsilon_i:=\ell_{i-1} - \ell_{i} \in \{0,1\}$ for all $i = 1, \ldots, r-1$. When this is the case, we have
$$
\Upsilon^{\circ}_{\bfell}|_{t_1=t_2=1,\ e_i=y^{-i}} = \prod_{i=1}^{r-1} \binom{r}{i}^{\epsilon_i}_y \prod_{1 \leq i < j \leq r-1} \Bigg( \frac{[j]_y[r-i]_y}{[j-i]_y[r]_y} \Bigg)^{-\epsilon_i \epsilon_j}.
$$
\end{lemma}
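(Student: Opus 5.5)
The plan is to compute the specialization monomial by monomial, using the explicit Nakajima--Yoshioka formula \eqref{eqn:NYlbdef} for $S_{\bfell}$. Since $S_{\bfell}$ is a sum of monomials with nonnegative coefficients, the last expression for $\Upsilon^{\circ}_{\bfell}$ factors as
\[
y^{-rD_{\bfell}}\prod_{w}\frac{1-yw^{-1}}{1-w^{-1}},
\]
with $w$ running over the monomials of $S_{\bfell}$ counted with multiplicity.

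Under $t_1=t_2=1$ and $e_i=y^{-i}$, a monomial in the $(i,j)$-summand of \eqref{eqn:NYlbdef} becomes $w=y^{i-j}$, so $w^{-1}=y^{j-i}$. Only pairs with $i\neq j$ occur, because $\ell_i-\ell_j=0$ gives no contribution. Hence no denominator $1-y^{j-i}$ vanishes identically, and the specialization \eqref{def:Sspec} is a well-defined element of $\Q(y^{\frac12})$. The multiplicity of the $(i,j)$-summand is $m_{ij}=d(d+1)/2$ if $d:=\ell_i-\ell_j>0$, and $m_{ij}=|d|(|d|-1)/2$ if $d<-1$. Each factor equals
\[
\frac{1-y^{j-i+1}}{1-y^{j-i}}=y^{\frac12}\frac{[j-i+1]_y}{[j-i]_y},
\]
where we use the convention $[-k]_y=-[k]_y$ and $[0]_y=0$. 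A numerator vanishes exactly when $j=i-1$ and $m_{i,i-1}>0$. This happens precisely when $\ell_i-\ell_{i-1}>0$ or $\ell_i-\ell_{i-1}<-1$, that is, when $\epsilon_i\notin\{0,1\}$. This gives the vanishing statement.

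In the remaining case $\epsilon_i\in\{0,1\}$ for all $i$, the factors $y^{\frac12}$ contribute $y^{\rk(S_{\bfell})/2}=y^{rD_{\bfell}}$, which cancels the prefactor. We are left with
\[
\prod_{i\neq j}\left(\frac{[j-i+1]_y}{[j-i]_y}\right)^{m_{ij}},
\]
together with a sign coming from the negative arguments. For $i<j$ we have $\ell_i-\ell_j=\epsilon_{i+1}+\cdots+\epsilon_j=:s_{ij}\geq 0$. The pair $(i,j)$ therefore contributes $\big([j-i+1]_y/[j-i]_y\big)^{s_{ij}(s_{ij}+1)/2}$. The pair $(j,i)$ has $d=-s_{ij}$, so it contributes $\big([i-j+1]_y/[i-j]_y\big)^{s_{ij}(s_{ij}-1)/2}$, which equals $\big([j-i-1]_y/[j-i]_y\big)^{s_{ij}(s_{ij}-1)/2}$ since the signs cancel.

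The main obstacle is to identify this product with the claimed closed form. Writing $k=j-i$, the product is
\[
\prod_{0\le i<j\le r-1}[k+1]_y^{\binom{s+1}{2}}\,[k-1]_y^{\binom{s}{2}}\,[k]_y^{-s^2},\qquad s=s_{ij}.
\]
Because $\epsilon_i^2=\epsilon_i$, each exponent is a quadratic polynomial in the $\epsilon$'s with no squared terms. So it suffices to compare, on both sides, the exponent of each $[m]_y$ as a coefficient of $\epsilon_a$ and of $\epsilon_a\epsilon_b$ for $a<b$.

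For the linear term, the coefficient of $\epsilon_a$ in $\binom{s+1}{2}$, $\binom{s}{2}$ and $s^2$ is $1$, $0$ and $1$ respectively, whenever $i<a\le j$. Summing $\log([k+1]_y/[k]_y)$ over pairs with $i<a\le j$ telescopes. For fixed $j$ one obtains $\log([j+1]_y/[j-a+1]_y)$, and summing over $j\ge a$ gives $\log\big([r]_y!/([a]_y!\,[r-a]_y!)\big)=\log\binom{r}{a}_y$.

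For the cross term $\epsilon_a\epsilon_b$ with $a<b$, the coefficients are $1$, $1$ and $2$ respectively, whenever $i<a<b\le j$. This gives the second difference $\sum\log\big([k+1]_y[k-1]_y/[k]_y^2\big)$ over such pairs, which telescopes in both $i$ and $j$. I expect the result to be $-\log\big([b]_y[r-a]_y/([b-a]_y[r]_y)\big)$, using $[0]_y$-type boundary terms which vanish once the telescoping is organized correctly. This double telescoping, with its boundary bookkeeping, is the step I would check most carefully, for example against the case $r=2$ and the case $r=3$ with $\epsilon=(1,1)$.
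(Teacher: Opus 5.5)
Your proposal is correct and follows essentially the same route as the paper. You specialize the Nakajima--Yoshioka monomials to powers of $y$ and get vanishing from a zero numerator factor in the $(i,i-1)$-summand; the paper splits this into the cases $\epsilon_i\le -1$ and $\epsilon_i\ge 2$, which you merge by specializing $t_1,t_2$ at the same time. You then telescope after linearizing the exponents via $\epsilon_a^2=\epsilon_a$, as the paper does.

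The double telescoping you left unchecked does work, and it has no $[0]_y$ boundary terms, since for the $\epsilon_a\epsilon_b$ term one always has $j-i\ge b-a+1\ge 2$. Taking the product over $0\le i<a$ gives $[j+1]_y[j-a]_y/([j-a+1]_y[j]_y)$. Taking the product of that over $b\le j\le r-1$ gives $[r]_y[b-a]_y/([b]_y[r-a]_y)$, exactly as you predicted.
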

\begin{proof}
No term of $S_{\bfell}$ is constant under the specialization $t_1=t_2=1$ and $e_i = y^{-i}$. So, the  specialization \eqref{def:Sspec} is well-defined. Note also that all terms in $S_{\boldsymbol{\ell}}$ come with positive multiplicity.

We consider the character $S_{\boldsymbol{\ell}}^\vee \otimes y$ appearing  in the numerator. Suppose there exists an index $i_0$ such that $\ell_{i_0-1} - \ell_{i_0} \leq -1$. Taking $i=i_0$ and $j=i_0-1$, we see that $S_{\boldsymbol{\ell}}^\vee \otimes y$ contains the term $1$, hence $\Lambda_{-y}S_{\boldsymbol{\ell}}^\vee  = 0$. 

Suppose there exists an index $i_0$ such that $\ell_{i_0-1} - \ell_{i_0} \geq 2$. Taking $i=i_0$ and $j=i_0-1$, we find that $S_{\boldsymbol{\ell}}^\vee \otimes y$ contains the term $t_1^{-1}t_2^{-1}$, hence $\Lambda_{-y}S_{\boldsymbol{\ell}}^\vee $ vanishes under the specialization $t_1=t_2=1$.

Now assume $\epsilon_i:=\ell_{i-1} - \ell_{i} \in \{0,1\}$ for all $i=1, \ldots r-1$. By \eqref{eqn:NYlbdef}, we compute 

\begin{align*}
&\frac{\Lambda_{-y}S_{\boldsymbol{\ell}}^\vee }{\Lambda_{-1}S_{\boldsymbol{\ell}}^{\vee}}\Big|_{t_1=t_2=1,\ e_i=y^{-i}}\\&=\prod_{0\leq i<j\leq r-1} \Big(\frac{1-y^{j-i+1}}{1-y^{j-i}}\Big)^{\binom{\ell_i-\ell_j+1}{2}}\Big(\frac{1-y^{i-j+1}}{1-y^{i-j}}\Big)^{\binom{\ell_j-\ell_i+1}{2}}
\\&=\prod_{0\leq i<j\leq r-1}\Big(y^{\frac{1}{2}}\frac{[j-i+1]_{y}}{[j-i]_y}\Big)^{\binom{\ell_i-\ell_j+1}{2}}\Big(y^{\frac{1}{2}}\frac{[j-i-1]_y}{[j-i]_y}\Big)^{\binom{\ell_j-\ell_i+1}{2}}
\\& = \prod_{0\leq i<j\leq r-1} \Big(y\frac{[j-i+1]_y[j-i-1]_y}{[j-i]_y^2}\Big)^{\frac{(\ell_i-\ell_j)^2}{2}}\Big(\frac{[j-i+1]_y}{[j-i-1]_y}\Big)^{\frac{\ell_i-\ell_j}{2}}.
\end{align*} 

Using that $$\ell_i-\ell_j=\epsilon_{i+1}+\cdots+\epsilon_{j}$$ for $0\leq i<j<r$  and $\epsilon_i^2=\epsilon_i$ when $\epsilon_i\in \{0,1\},$ we find that 

\begin{align*}
&y^{-\frac{1}{2} \sum_{i<j} (\ell_i - \ell_j)^2 } \frac{\Lambda_{-y} S_{\boldsymbol{\ell}}^\vee }{\Lambda_{-1} S_{\boldsymbol{\ell}}^\vee  }\Big|_{t_1=t_2=1,\ e_i=y^{-i}} 
\\&=\prod_{i=1}^{r-1}\prod_{\begin{subarray}{c} 0\leq i'<i \\ i\leq j' < r\end{subarray}} \Big(\frac{[j'-i'+1]_y}{[j'-i']_y}\Big)^{\epsilon_i} \cdot  \prod_{0\leq i < j < r} \prod_{\begin{subarray}{c} 0\leq i'<i \\ j\leq j'< r\end{subarray}}\Big(\frac{[j'-i'+1]_y[j'-i'-1]_y}{[j'-i']_y^2}\Big)^{\epsilon_i\epsilon_j} \\& = \prod_{i=1}^{r-1}\binom{r}{i}_y^{\epsilon_i}\cdot \prod_{1\leq i< j<r} \Big(\frac{[r-1+1]_y[j-(i-1)-1]_y}{[j-0]_y[(r-1)-(i-1)]_y}\Big)^{\epsilon_i\epsilon_j},
\end{align*}
completing the proof.
\end{proof}
{Lemma \ref{lem:comb} implies that \eqref{def:Sspec} must vanish unless $$(\epsilon_i[C])^2=(\epsilon_i[C])\cdot K_{\widehat{\C}^2}\in H_2(\widehat{\C}^2,\Z)$$ for all $i$. Note the similarity to the condition $\beta^2=\beta K_S$ of \cite[Proposition 6.3.1]{Moc}.}

\section{From framed sheaves to Vafa-Witten invariants}\label{sec:framedtoVW}

\subsection{$\mathsf{G}_{\C^2,(c_1(L_{i-1}/L_{i}))}$ and $\mathsf{G}_{\widehat{\C}^2,((\ell_{i-1}-\ell_i)[C])}$ in terms of framed sheaves}

\subsubsection{}

We now prove Proposition \ref{prop:main} of the introduction.
\begin{proof} [Proof of Proposition \ref{prop:main}]
Consider a maximal $T$-invariant affine open subset $U_{\alpha}\cong \C^2$ of $S$.
We write $(w_1)_{\alpha}$ and $(w_2)_{\alpha}$ for the weights of the $T$-action on the coordinate directions of $U_{\alpha}$ and $L_i^{(\alpha)}$ for the $T$-character corresponding to the restriction $L_i|_{U_\alpha}$. 

We consider \eqref{def:T[n]} and \eqref{def:E[n]} for $S = U_\alpha$, so that $$\E^{[\bfn]}=\bigoplus_{i=0}^{r-1}\mathcal{I}_i\otimes L_i^{(\alpha)}y^{-i},$$ and
$$T^{[\bfn]} = (R\hom_{\pi}(\mathbb{E}^{[\bfn]}, \mathbb{E}^{[\bfn]} )_0)^{\vee} \otimes y - R\hom_{\pi}(\mathbb{E}^{[\bfn]}, \mathbb{E}^{[\bfn]})_0,
$$
where we have used Serre duality to rewrite the first summand.

Comparing \eqref{def:T0[n]} and \eqref{def: Vdef}, we compute
\begin{align}\label{eq:TfromM} T^{[\bfn]}_0&=T^{[\bfn]}-T^{[\bf 0]} \\ \nonumber & = \Big(-y\cdot(V_1^{[\bfn]})^{\vee} +V_1^{[\bfn]}\Big)\Big|_{t_i=(w_\alpha)_{i},\ e_j=L_{j}^{(\alpha)}y^{-j}}\in K^0_{T\times \C^*_y}(\Hilb^{\bfn} (U_{\alpha})),
\end{align}
where the substitution denotes taking the image under the induced map $$K^0_{\T}(\Hilb^{\bfn}(\C^2))\to K^0_{T\times \C^*_y}(\Hilb^{\bfn}(U_{\alpha})).$$
Note that the subtraction of $T^{[\bf 0]}$ corresponds to the removal of trace appearing in each summand of \eqref{def:V1def}. 
Recall from \eqref{def:altUps} that $$\Upsilon_{U_\alpha,\bfa,\bfn}=\chi\Big(\Hilb^{\bfn}(U_{\alpha}),\Lambda_{-1}(-\Omega_0^{[\bfn]}+\Omega_{\Hilb^{\bfn}(U_{\alpha})})\otimes \det(\Omega_0^{[\bfn]})^{\frac{1}{2}}\Big) \in \Q(y^{\frac{1}{2}},t_1,t_2).$$
Then by \eqref{eq:TfromM}, we have 

\begin{align}\label{eq:UpsfromM} \Lambda_{-1}&(\Omega_{\Hilb^{\bfn}(U_{\alpha})})\cdot\frac{\det(\Omega_0^{[\bfn]})^{\frac{1}{2}}}{\Lambda_{-1}(\Omega_0^{[\bfn]})}\\ \nonumber&=\Lambda_{-1}(\Omega_{\Hilb^{\bfn}(\C^2)})\cdot\frac{\big(\det(V_1^{[\bfn]})\cdot\det(y^{-1}\cdot V_1^{[\bfn]})\big)^{-\frac{1}{2}}}{\Lambda_{-1}\big((V_1^{[\bfn]})^{\vee}-y^{-1}\cdot V_1^{[\bfn]}\big)} \Big|_{t_i=(w_{\alpha})_i,\  e_j=L_j^{(\alpha)}y^{-j}} 
\\\nonumber &=(-1)^{\mathrm{rk}(V_1^{[\bfn]})}\Lambda_{-1}(\Omega_{\Hilb^{\bfn}(\C^2)})\cdot\frac{\big(\det(V_1^{[\bfn]})/\det(y^{-1}\cdot V_1^{[\bfn]})\big)^{-\frac{1}{2}}}{\Lambda_{-1}\big((V^{[\bfn]})^{\vee}\big)}\Big|_{t_i=(w_{\alpha})_i,\ e_j=L_j^{(\alpha)}y^{-j}}
\\ \nonumber &=\Bigg({\Lambda_{-y}(\Omega_{\Hilb^{\bfn}(\C^2)})}\cdot\frac{\big(\det(V^{[\bfn]})\big)^{-\frac{1}{2}}}{\Lambda_{-1}\big(((V^{[\bfn]})^{\vee})^{m}\big)}\Bigg)\Big|_{t_i=(w_{\alpha})_i,\ e_j=L_j^{(\alpha)}y^{-j}}\\ \nonumber & \qquad  \in K^0_{T\times \C^*_y}(\Hilb^{\bfn}(U_{\alpha}))\Big[y^{\frac{1}{2}},\frac{1}{1-y^{n}t_1^{k_1}t_2^{k_2}}\Big]_{n \in \Z_{\neq 0},\ k_i \in \Z}.
 \end{align}

In particular, $\Upsilon_{U_\alpha,\bfa,\bfn}$ matches the $\bfn$-summand of the right-hand side of    \eqref{eq:chiyloc} under the specialization $e_i=L^{(\alpha)}_iy^{-i}$. We conclude that 
\begin{align}
\label{eq:Gfromchi}
\sum_{n=0}^{\infty} q^n\cdot \widehat{\chi}_{-y}(M_{\mathbb{P}^2}(r,n))\Big|_{((w_\alpha)_1, (w_\alpha)_2,L_0^{(\alpha)},L_1^{(\alpha)}y^{-1}, \ldots, L_{r-1}^{(\alpha)}y^{-(r-1)})}  = \mathsf{G}_{U_{\alpha},(c_1(L^{(\alpha)}_{i-1}/L^{(\alpha)}_{i}))}.
\end{align}
The proposition then follows from \eqref{eqn:final} and Lemma \ref{lem:prod}. \end{proof}

We remark that each term of \eqref{eq:Gfromchi} should be regarded as a rational function.  Although $\widehat{\chi}_{-y}(M_{\mathbb{P}^2}(r,n))$ is a well-defined Laurent series, its specialization on the left-hand side of \eqref{eq:Gfromchi} need not be well-defined as a Laurent series for arbitrary choices of $L_{i}^{(\alpha)}$. Rather  $\widehat{\chi}_{-y}(M_{\mathbb{P}^2}(r,n))$ can be written (using, for example, localization) as a rational function in $t_i, e_j$ that is well-defined under the specialization from \eqref{eq:Gfromchi}.

\begin{remark} \label{rem:propmainstrong}
Referring to the universal functions $\overline{E}_i$, $\overline{E}_{ij}$ defined in \eqref{eqn:univdef}, we in fact proved something stronger. Namely for any smooth projective toric surface $S$ and \emph{any} $T$-equivariant algebraic classes $\bfa  = (a_1, \ldots, a_{r-1})\in H^2_T(S,\Z)^{r-1}$, we have
\[
A^{\chi(\O_S)} B^{K_S^2} \prod_i \overline{E}_{i}^{a_i K_S} \prod_{i \leq j} \overline{E}_{ij}^{a_i a_j} = \Bigg(\prod_{\alpha=1}^{e(S)} \sum_{n=0}^{\infty} q^n \widehat{\chi}_{-y}(M_{\PP^2}(r,n)) \Big|_{(w_\alpha,\{L_i^{(\alpha)}y^{-i}\}_i)}\Bigg)_{t_1=1,t_2=1}.
\]
In particular, the product in the parentheses on the right-hand side is a Laurent polynomial in $t_1,t_2$, and so is well-defined under the specialization $t_1=1,t_2=1$. \end{remark}

\subsubsection{}
Equip $\widehat{\C}^2$ with the $T$-action induced from the action on $\C^2$ scaling the coordinate directions with weights $t_1$ and $t_2$, see Figure \ref{fig}.  We now relate the terms of the form $\mathsf{G}_{\widehat{\C}^2,((\ell_{i-1}-\ell_i)[C])}$ to $\widehat{\chi}_{-y}(M_{\widehat{\PP}^2}(r,\ell,n)).$ The argument is similar to that of the previous section. Namely, let $$ \Hilb^{\bfn}_{{\bfell}}(\widehat{\C}^2)=\prod_{i=0}^{r-1}M_{\widehat{\PP}^2}(1,\ell_i,n_i)\subset M_{\widehat{\PP}^2}(r,\ell,n)^{T_2}$$ with $|\bfell|=\ell$ and $|\bfn|=n-D_{\bfell}$ be a component of the $T_2$-fixed locus, and consider \eqref{def:T[n]} and \eqref{def:E[n]} for $S=\widehat{\C}^2$ with $L_i=\O(\ell_iC)$ for $\ell_i\in \Z$, so that $$\E^{[\bfn]}=\bigoplus_{i=0}^{r-1}\mathcal{J}_i\boxtimes\mathcal{O}(\ell_iC) y^{-i}$$
and 
$$T^{[\bfn]} = (R\hom_{\pi}(\mathbb{E}^{[\bfn]}, \mathbb{E}^{[\bfn]} )_0)^{\vee} \otimes y - R\hom_{\pi}(\mathbb{E}^{[\bfn]}, \mathbb{E}^{[\bfn]})_0.$$
Arguing as in \eqref{eq:TfromM}, one has 
\begin{align*}
T^{[\bfn]}_0&=T^{[\bfn]}-T^{[\bf0]}
\\& =\Big(-y\cdot\big(W_{1,\bfell}^{[\bfn]}-S_{\bfell}\big)^{\vee}+W^{[\bfn]}_{1,\bfell}-S_{\bfell}\Big)\Big|_{e_i=y^{-i}} \in K^0_{T\times \C^*_y}(\Hilb^{\bfn}(\widehat{\C}^2)),
\end{align*}
where the substitution of variables denotes the corresponding induced map $$K^0_{\mathbb{T}}(\Hilb^{\bfn}_{\bfell}(\widehat{\C}^2))\to K^0_{T\times \C_y^*}(\Hilb^{\bfn}(\widehat{\C}^2)).$$

Arguing as in \eqref{eq:UpsfromM}, we obtain
\begin{align*}
\Lambda_{-1}&(\Omega_{\Hilb^{\bfn}(\widehat{\C}^2)})\cdot\frac{\det(\Omega_0^{[\bfn]})^{\frac{1}{2}}}{\Lambda_{-1}(\Omega_0^{[\bfn]})}\\&= \Big({\Lambda_{-1}(\Omega_{\Hilb^{\bfn}_{\bfell}(\widehat{\C}^2)})}\cdot\frac{\det(y^{\frac{1}{2}}S^{\vee}_{\bfell})^{-1}}{\Lambda_{-1}(y^{-1}S_{\bfell}-S_{\bfell}^{\vee})}
\frac{\det(y^{-\frac{1}{2}}\cdot W_{1,\bfell}^{[\bfn]})^{-1}}{\Lambda_{-1}\big((W_{1,\bfell}^{[\bfn]})^{\vee}-y^{-1}\cdot W_{1,\bfell}^{[\bfn]}\big)} \Big)\Big|_{e_i=y^{-i}}
\\&= \Big({(-1)^{2r(D_{\bfell}+n)}}{} \frac{\Lambda_{-y}(\Omega_{\Hilb^{\bfn}_{\bfell}(\widehat{\C}^2)})}{\Upsilon^{\circ}_{\bfell}}\frac{\big(\det(W^{[\bfn]}_{\bfell})\big)^{-\frac{1}{2}}}{\Lambda_{-1}((W^{[\bfn]}_{\bfell})^{\vee})^{m}}\Big)\Big|_{ e_i=y^{-i}}\\ & \qquad \in K^0_{T\times \C^*_y}(\Hilb^{\bfn}(\widehat{\C}^2))\Big[y^{\frac{1}{2}},\frac{1}{1-y^{n}t_1^{k_1}t_2^{k_2}}\Big]_{n \in \Z_{\neq 0},\ k_i \in \Z}.
\end{align*}

As $(-1)^{2rn}=(-1)^{2rD_{\bfell}}$,
we conclude that 
\begin{align}
\label{eq:GfromchiBl} 
\sum_{n}q^{n}\cdot\widehat{\chi}_{-y}(M_{\widehat{\PP}^2}(r,\ell,n))\Big|_{e_i=y^{-i}}&=\sum_{|\bfell|=\ell }q^{D_{\bfell}}\cdot \Upsilon^{\circ}_{\bfell}\big|_{e_i=y^{-i}}\cdot \mathsf{G}_{\widehat{\C}^2,((\ell_{i-1}-\ell_i)[C])}.
\end{align}

Only finitely many terms of the sum right-hand side of \eqref{eq:GfromchiBl} contribute to any given term of the left-hand sum.

\subsection{Symmetry equations}
We prove the first set of equations of Theorem  \ref{thm:main}, which we refer to as \emph{symmetry equations}.
\begin{proof}[Proof of Theorem \ref{thm:main}, symmetry equations]
We prove the stronger statement that
$$
\overline{E}_{r-i} = \overline{E}_i, \quad \overline{E}_{r-j,r-i} = \overline{E}_{ij}.
$$
Note that by \eqref{eqn:defbarCij} and \eqref{eqn:defCij}, these equalities imply that $C_{ij} = C_{r-j,r-i}$ for all $i \leq j$.

Let $S$ be a smooth projective toric surface, $\bfa  = (a_1, \ldots, a_{r-1})\in H^2_T(S,\Z)^{r-1}$ be a choice of $T$-equivariant algebraic classes and consider the equation in Remark \ref{rem:propmainstrong}. Then for any $\alpha, n$, denoting $M:=M_{\mathbb{P}^2}(r,n)$, we have
\begin{align*}
\widehat{\chi}_{-y}(M) \Big|_{(w_\alpha,L_0^{(\alpha)}, L_1^{(\alpha)}y^{-1}, \ldots, L_{r-1}^{(\alpha)}y^{-(r-1)})} &= \widehat{\chi}_{-y}(M) \Big|_{(w_\alpha,L_{r-1}^{(\alpha)} y^{-(r-1)}, L_{r-2}^{(\alpha)}y^{-(r-2)}, \ldots, L_{0}^{(\alpha)})} \\
&=\widehat{\chi}_{-y}(M) \Big|_{(w_\alpha,(L_{r-1}^{(\alpha)})^{-1} y^{r-1}, (L_{r-2}^{(\alpha)})^{-1}y^{r-2}, \ldots, (L_{0}^{(\alpha)})^{-1})} \\
 &=\widehat{\chi}_{-y}(M) \Big|_{(w_\alpha,(L_{r-1}^{(\alpha)})^{-1}, (L_{r-2}^{(\alpha)}y)^{-1}, \ldots, (L_{0}^{(\alpha)}y^{(r-1)})^{-1})},
\end{align*}
where we used Theorem \ref{thm:symmNek} (1) for the first equality, Theorem \ref{thm:symmNek} (2) for the second equality, and the fact that the parameters $e_i$ only enter these expressions through their quotients $e_i^{-1} e_j$, which follows from \eqref{eq:TMres}, for the third equality. Defining $b_i := a_{r-i}$ for all $i=1, \ldots, r-1$, 
we obtain
\begin{align*}
\overline{A}^{\chi(\O_S)} \overline{B}^{K_S^2} \prod_i \overline{E}_{i}^{a_i K_S} \prod_{i \leq j} \overline{E}_{ij}^{a_i a_j} &= \overline{A}^{\chi(\O_S)} \overline{B}^{K_S^2} \prod_i \overline{E}_{i}^{b_i K_S} \prod_{i \leq j} \overline{E}_{ij}^{b_i b_j} \\
&= \overline{A}^{\chi(\O_S)} \overline{B}^{K_S^2} \prod_i \overline{E}_{r-i}^{a_i K_S} \prod_{i \leq j} \overline{E}_{r-j,r-i}^{a_i a_j}.
\end{align*}
As this equation holds for all choices of $S, a_i$, and $\overline{E}_{i}, \overline{E}_{ij}$ have constant coefficient $1$, the result follows.
\end{proof}

\subsection{Blow-up equations}
We prove the second set of equations of Theorem  \ref{thm:main}, which we refer to as \emph{blow-up equations}.
\begin{proof}[Proof of Theorem \ref{thm:main}, blow-up equations]
Fix a smooth projective toric surface $S$. Denote by $\pi \colon \widehat{S} \to S$ the blow-up in one $T$-fixed point with exceptional divisor $C$. 

Let $e:=e(S)$ and let $\{U_\alpha\}_{\alpha=1}^e$ be the cover by maximal $T$-invariant affine open subsets of $S$. We denote the cover by maximal $T$-invariant affine open subsets of $\widehat{S}$ by $\{U'_\alpha\}_{\alpha=1}^{e+1}$, where the labeling is chosen such that $U_1', U_{e+1}'$ contain the exceptional divisor $C$. In particular $U_\alpha' \cong U_\alpha$ for all $\alpha \neq 1,e+1$. Choose the $T$-action so that the coordinate directions of $U_1$ are scaled by $t_1$ and $t_2$. Then, without loss of generality, the coordinate directions of $U_{1}'$ are scaled by $t_1$ and  $t_1^{-1}t_2$ and the coordinate directions of $U_{e+1}'$ are scaled by $t_1 t_2^{-1}$ and $t_2$, see Figure \ref{fig}.

Fix $\ell\in \Z$. Pick $\epsilon_{i}\in \{0,1\}$ for $i=1,\ldots,r-1$. We have $$(\epsilon_{i} [C])\cdot K_{\widehat{S}}=(\epsilon_{i}[C])^2=-\epsilon_i, \ \ \chi(\mathcal{O}_S)=\chi(\mathcal{O}_{\widehat{S}}), \ \ K_{\widehat{S}}^2=K_S^2-1.$$ Applying \eqref{eqn:univdef} to $(\widehat{S},(\epsilon_i[C]))$ and $(S,{\bf0})$ we find that $$B^{-1}\prod_{i\leq j}C_{ij}^{-\epsilon_i\epsilon_j}{\mathsf{G}_{S,{\bf0}}}=q^{-\frac{r}{24}+Q\big((\epsilon_i[C])\big)}\frac{\Upsilon_{\widehat{S},(\epsilon_i[C])}}{\Upsilon_{S,{\bf 0}}}\cdot \mathsf{G}_{\widehat{S},(\epsilon_i[C])}.$$

For any $\bfell$ satisfying $\ell_{i-1}-\ell_{i}=\epsilon_{i}$ for all $i$, from \eqref{def:Q} we have $$Q\big((\epsilon_{i}[C])\big)=D_{\bfell},$$ and by  \eqref{def:Upsiloncst} and Lemma \ref{lem:comb} we have \begin{align*}\frac{\Upsilon_{\widehat{S},(\epsilon_i[C])}}{\Upsilon_{S,{\bf0}}}&= \prod_{i=1}^{r-1}\binom{r}{i}^{\epsilon_{i}}_y\prod_{1\leq i<j\leq r-1}\Big(\frac{[j]_y[r-i]_y}{[j-i]_y[r]_y}\Big)^{-\epsilon_i\epsilon_j}
=\Upsilon_{\bfell}^{\circ}|_{t_1=t_2=1,\ e_i=y^{-i}}.
\end{align*}

Fix $\ell\in \{0,\ldots, r-1\}$. Note that there is a natural bijection between sequences $\bfell = (\ell_0, \ldots, \ell_{r-1}) \in \Z^{r}
$ with $\sum_{i=0}^{r-1} \ell_i = \ell$ 
 such that $\ell_{i-1} - \ell_i \in \{0,1\}$ for all $i\in [r-1]$ and subsets $I \subset [r-1]$ satisfying $|\!|I|\!| \equiv \ell \mod r$, where $|\!|I|\!|:=\sum_{i \in I} i$. 
 
 So, for any $I\subset [r-1]$ with $|\!|I|\!| \equiv \ell \mod r$, we may let $\bfell_{I}$ denote the unique sequence $\bfell=(\ell_0,\ldots,\ell_{r-1}) \in \Z^r$ such that $\sum_{i=0}^{r-1} \ell_i=\ell$ and for all $i\in [r-1]$ one has $$\ell_{i-1}-\ell_i= \begin{cases} 1 \hspace{.5cm} i\in I \\ 0 \hspace{.5cm} i\not\in I \end{cases}.$$ 
Recalling the definition \eqref{def:CI} of the universal series $C_{I}$, we have 
\begin{align*}\sum_{\begin{subarray}{c} I\subset [r-1]\\ |\!|I|\!| \equiv \ell \mod r\end{subarray}}C_{I}^{-1}\mathsf{G}_{S,{\bf0}}&=\sum_{\begin{subarray}{c} I\subset [r-1]\\ |\!|I|\!|\equiv \ell \mod r \end{subarray}} q^{-\frac{r}{24}+D_{\bfell_I}} \Upsilon_{\bfell_I}^{\circ}\Big|_{t_1=t_2=1,\ e_i=y^{-i}}\mathsf{G}_{\widehat{S},((\ell_{i-1}-\ell_{i})[C])}
\\&=\sum_{\begin{subarray}{c}\bfell \in \Z^r \\ \ell_0+\cdots+\ell_{r-1}= \ell \end{subarray}} q^{-\frac{r}{24}+D_{\bfell}} \Upsilon_{\bfell}^{\circ}\Big|_{t_1=t_2=1,\ e_i=y^{-i}}\mathsf{G}_{\widehat{S},((\ell_{i-1}-\ell_{i})[C])},\end{align*}
 where the latter equality holds by Lemma  \ref{lem:comb}, which implies that the $\bfell$-summand on the right-hand side vanishes unless $\ell_{i-1} - \ell_i \in \{0,1\}$ for all $i\in [r-1]$.

By \eqref{eq:GBlfact} and Lemma \ref{lem:prod}, $${\mathsf{G}_{\widehat{S},((\ell_{i-1}-\ell_{i})[C])}}=\Big(\mathsf{G}_{\widehat{\C}^2,((\ell_{i-1}-\ell_i)[C])}\displaystyle\prod_{\alpha=2}^{e}\mathsf{G}_{U_{\alpha},{\bf0}}\Big)\Big|_{t_1=t_2=1} $$ 
Therefore, by \eqref{eq:GfromchiBl}, 
\begin{align*}
    \sum_{\begin{subarray}{c} I\subset [r-1]\\ |\!|I|\!|\equiv \ell \mod r \end{subarray}}C_{I}^{-1}\mathsf{G}_{S,{\bf0}}
    &=\sum_{\begin{subarray}{c}\bfell \in \Z^r \\ 
 \ell_0+\cdots+\ell_{r-1}= \ell \end{subarray}} \Big( q^{-\frac{r}{24}+D_{\bfell}} \Upsilon_{\bfell}^{\circ}\Big|_{ e_i=y^{-i}} \mathsf{G}_{\widehat{\C}^2,((\ell_{i-1}-\ell_i)[C])}\cdot\displaystyle\prod_{\alpha=2}^{e}\mathsf{G}_{U_{\alpha},{\bf0}}\Big)\Big|_{{t_1=t_2=1}}
    \\&=  \Big(q^{-\frac{r}{24}}\sum_{n} q^n \widehat{\chi}_{-y}(M_{\widehat{\PP}^2}(r,\ell,n))
\big|_{e_i=y^{-i}}\prod_{\alpha=2}^{e}\mathsf{G}_{U_{\alpha},{\bf0}}\Big)\Big|_{{t_1=t_2=1}}.
\end{align*}
Applying Kuhn-Leigh-Tanaka's blow-up formula (Theorem \ref{thm:symmNek}(3)), \eqref{eq:Gfromchi}, and Lemma \ref{lem:prod} again yields

 \begin{align*}
   \sum_{\begin{subarray}{c} I\subset [r-1]\\ |\!|I|\!|\equiv\ell \mod r \end{subarray}}C_{I}^{-1}\mathsf{G}_{S,{\bf0}}
    &=  
\Big(\frac{\Theta_{A_{r-1},\ell}}{{\eta}^r} \sum_{n} q^n\widehat{\chi}_{-y}(M_{\PP^2}(r,n))\big|_{e_i=y^{-i}}\prod_{\alpha=2}^{e}\mathsf{G}_{U_{\alpha,{\bf0}}} \Big)\Big|_{t_1=t_2=1}\\&= \frac{\Theta_{A_{r-1},\ell}}{{\eta}^r} \Big(\prod_{\alpha=1}^{e}\mathsf{G}_{U_{\alpha,{\bf0}}} \Big)\Big|_{t_1=t_2=1} 
 \\& = \frac{\Theta_{A_{r-1},\ell}}{{\eta}^r} \mathsf{G}_{S,{\bf0}}.
\end{align*}
We deduce that $$\sum_{\begin{subarray}{c} I\subset [r-1]\\ |\!|I|\!|\equiv \ell \mod r \end{subarray}}C_{I}^{-1} =\frac{\Theta_{A_{r-1},\ell}}{{\eta}^r}. $$

The second set of equations of Theorem \ref{thm:main} then follows  
from the theta function identity (cf.~\cite[Equation (39)]{GKL})
\begin{equation*} 
\sum_{\ell = 0}^{r-1} \epsilon_r^{k \ell} \Theta_{A_{r-1},\ell} = \Theta_{A_{r-1}^\vee,k}. \qedhere
\end{equation*}
\end{proof}

\section{Stable/co-stable wall-crossing}
In this section, we prove identity (2) of Theorem \ref{thm:symmNek}. We deduce the result from the invariance of the equivariant $\chi_{-y}$-genus of $M_{\PP^2}(r,n)$ under a certain GIT wall-crossing.

\subsection{Quiver presentation of $M$ } \label{sec:quiver}

We first recall the presentation of $M$ as a Nakajima quiver variety associated to the Jordan quiver $\widehat{A}_0$. Consider the linear space $$\widetilde{M}(r,n)= \Hom(\C^n,\C^n)^{\oplus 2}\oplus \Hom(\C^r,\C^n)\oplus \Hom(\C^n,\C^r)$$ of fixed-dimensional representations of the framed and doubled Jordan quiver. 

The torus $\T$ acts on $\widetilde{M}(r,n)$ as follows. An element $(t_1,t_2,e_0,\ldots,e_{r-1},y)\in \T$ sends   $$(X_1,X_2,I,J)\mapsto {\Big(} t_1X_1,t_2X_2,I \mathrm{diag}(e_0,\ldots,e_{r-1})^{-1},t_1t_2\cdot \mathrm{diag}(e_0,\ldots,e_{r-1})J  {\Big)},$$ 
and the group $\GL(n)$ acts on  $\widetilde{M}(r,n)$ by $$g\cdot (X_1,X_2,I,J)= (gX_1g^{-1}, gX_2g^{-1}, gI, Jg^{-1}).$$ The actions of $\T$ and $\GL(n)$ commute. 

There is a $\T$-equivariant isomorphism of varieties between $M_{\PP^2}(r,n)$ and the free quotient $$M(r,n):=\Big\{(X_1,X_2,I,J)\in \widetilde{M}(r,n) \, \Big | \begin{array}{l}[X_1,X_2]+IJ=0 \\ \C\langle X_1,X_2\rangle\mathrm{im}(I)=\C^n \end{array} \Big\}/ \GL(n).$$ 
This quotient is the Nakajima quiver variety associated to the Jordan quiver variety and GIT stability given by the character $\det \colon \GL(n)\to \C^*$. The Nakajima quiver variety associated to the identical quiver with opposite GIT stability condition given by the character $\det^{-1}\colon \GL(n)\to \C^*$ is $$M^c(r,n) := \Big\{(X_1,X_2,I,J)\in \widetilde{M}(r,n) \, \Big | \begin{array}{l}[X_1,X_2]+IJ=0 \\ J\big(\C\langle X_1,X_2\rangle v\big)\neq 0 \ \mathrm{for}\ 0\neq v\in \ker J\ \end{array} \Big\}/\GL(n).$$
Both $M(r,n)$ and $M^c(r,n)$ are $\T$-equivariant resolutions of the affine quotient $$M^0(r,n):=\Big\{(X_1,X_2,I,J)\in \widetilde{M}(r,n) \, \big | \, [X_1,X_2]+IJ=0 \Big\}/\!/\GL(n);$$
see, for example, \cite[Section 2]{NY2}. By \cite[Theorem 1.1]{CB1} and \cite[Proposition 2.5]{BL}, there are $\T$-equivariant isomorphisms between $M^0$ and each of $\Spec \Gamma(\O_{M(r,n)})$ and  $\Spec \Gamma(\O_{M^c(r,n)})$ such that the natural projective morphisms $$M(r,n)\to M^0(r,n),\ \ \ \ M^c(r,n)\to M^0(r,n)$$ given by variation of GIT stability coincide with the affinization maps. By \cite[Section 5.2, Proposition 5.2]{Cho}, the affine quotient $M^0(r,n)$ is reduced and normal.

The automorphism $\sigma\colon \widetilde{M}(r,n)\to \widetilde{M}(r,n)$ sending a quadruple $$(X_1,X_2,I,J)\mapsto (X_1^{t},X_2^{t}, -J^{t}, I^{t})$$ induces an isomorphism of varieties  $M(r,n)\rightarrow M^c(r,n)$, which we also denote by $\sigma$.
However, the induced isomorphism $\sigma$ is not $\T$-equivariant. Instead, one has \begin{align*}\sigma((t_1,t_2,e_0,\ldots,e_{r-1},y)[X_1,X_2,I,J])&=(t_1,t_2,\frac{e_0^{-1}}{t_1t_2},\ldots,\frac{e_{r-1}^{-1}}{t_1t_2},y)\sigma([X_1,X_2,I,J])\\&=(t_1,t_2,e_0^{-1},\ldots,e_{r-1}^{-1},y)\sigma([X_1,X_2,I,J]),\end{align*} where the last equality holds because the $1$-dimensional subtorus $\{(\mathrm{Id},c\cdot \mathrm{Id})\}$ in $ T_1\times T_2$ acts trivially on $M(r,n)$.

It follows that for any $r,n,k\geq 0$, there is an equality of $\T$-equivariant Euler characteristics $$\chi(M(r,n),\Omega^k_{M(r,n)} )_{e_i\mapsto e_i^{-1}} =\chi(M^{c}(r,n), \Omega^k_{M^{c}(r,n)})\in \Q(t_1,t_2,e_0,\ldots,e_{r-1}).$$

\subsection{Stable/co-stable wall crossing}\label{sec:stcost}

The equality $(2)$ of Theorem \ref{thm:symmNek} is therefore equivalent to Theorem \ref{thm:stcostintro},
which we recall asserts that
\begin{align*}
\chi(M(r,n),\Omega^k_{M(r,n)} ) =\chi(M^{c}(r,n), \Omega^k_{M^{c}(r,n)})\in \Q(t_1,t_2,e_0,\ldots,e_{r-1}). \end{align*}

\begin{remark} \label{rmk:generalNak} Theorem \ref{thm:stcostintro} generalizes to any smooth Nakajima quiver varieties as follows. Adopting the notation of \cite[Section 3.2]{BD}, let ${\mathbf N}_Q^{\zeta}({\bf f},{\bf d})$ and ${\mathbf N}^{\zeta'}_Q({\bf f},{\bf d})$ denote the pair of smooth Nakajima quiver varieties arising from two choices $\zeta, \zeta'$ of generic King stability conditions for fixed underlying quiver $Q$ and dimension vectors $\bf{f}$ and $\bf{d}$. Let $T$ denote the torus defined in \cite[Section 2.6]{BD} that acts on ${\mathbf N}_Q^{\zeta}({\bf f},{\bf d})$.
Our proof of Theorem \ref{thm:stcostintro} in Section \ref{sec:MHMproof} extends as written to show that $$\chi({\mathbf N}_Q^{\zeta}({\bf f},{\bf d}), \Omega^k)=\chi({\mathbf N}_Q^{\zeta'}({\bf f},{\bf d}), \Omega^k)\in \Q(T)$$ for all $k$. In this general setting, the role of the affine quotient $M^0(r,n)$ is played by the reduced scheme underlying ${\mathbf N}^{(0,\ldots,0)}_Q({\bf f},{\bf d})\cong \mathcal{M}_{({\bf d},1)}(\Pi_{Q_{\bf f}})$.

For example, when $Q$ is the $A_1$ quiver, one obtains an equality $$\chi(T^*\textrm{Gr}(n,r),\Omega^k)=\chi(T^*\textrm{Gr}(n-r,r),\Omega^k)\in \Q(\C^*\times (\C^*)^r),$$ where the first factor $\C^*$ scales the cotangent directions, and the action of the second factor $(\C^*)^r$ is induced from its action on $\C^r$.

To simplify the notation, in Section \ref{sec:MHMproof} we write the proof of Theorem \ref{thm:stcostintro} only for instanton moduli space.
\end{remark} 

\begin{remark}
Theorem \ref{thm:stcostintro} can be regarded as an equivariant, non-compact instance of the following results that hold in the setting of projective varieties. A result \cite[Corollary 6.29]{Bat} of Batyrev asserts that if two $K$-trivial smooth projective varieties $X$ and $X'$ are birationally equivalent, then one has an equality $$h^q(X,\Omega^p_X)=h^q(X',\Omega^p_{X'})$$ of Hodge numbers for any $p$ and $q$. 

Another analogous result in the hyperk\"{a}hler setting is due to Huybrechts, who shows in \cite[Theorem 4.6, Corollary 4.7]{Huy} that if two hyperk\"{a}hler manifolds $Y$ and $Y'$ are birationally equivalent, then they are deformation equivalent. So, for any $p$ and $q$ there is an equality of Hodge numbers  $$h^q(Y,\Omega^p_Y)=h^q(Y',\Omega^p_{Y'}).$$ 

It is therefore interesting to ask if the equality of $\chi_{-y}$-genera of Theorem \ref{thm:stcostintro} can be refined to an equality of the equivariant Hodge numbers $$h^q(M(r,n),\Omega^p_{M(r,n)} ),\   h^q(M^{c}(r,n), \Omega^p_{M^{c}(r,n)})\in \Q(t_1,t_2,e_0,\ldots,e_{r-1}).$$ (One strategy to produce such an equality of Hodge numbers would be to generalize the proof of Theorem \ref{thm:stcostintro} from Section \ref{sec:MHMproof} by lifting the maps ${\bf Gr}^F_{k}$ of Section \ref{subsec:MHMproperties} and the commutative diagram  \eqref{MHMNak} of Grothendieck groups to the level of derived categories. However, it is not known if such lifts exist in our setting. We thank J.~Sch\"{u}rmann for correspondence on this subject.)
\end{remark}

The remainder of this section is devoted to the proof of Theorem \ref{thm:stcostintro}. We abbreviate $M(r,n), M^c(r,n)$ and $M^0(r,n)$ by $M, M^c$ and $M^0$, respectively.

\subsection{Warm-up}\label{sec:warmup}
Our proof of Theorem \ref{thm:stcostintro} can be regarded as a generalization of the following argument for the case $k=0$. The argument is standard and can be found, for example, in \cite[Section 5.5]{Gin}. 

Let $q\colon M\to M^0$ and $q^c\colon M^c\to M^0$ denote the affinization maps. Both maps are $\mathbb{T}$-equivariant, proper and birational. As $M$ and $M^c$ are smooth, the Grauert-Riemenschneider theorem implies that for $i>0$ one has $$R^iq_*\omega_M=0, \ \ \ R^i{q^c_*}\omega_{M^c}=0,$$ 
where $\omega_{M}$, $\omega_{M^c}$ denote the dualizing line bundles.
As $M$ and $M^c$ are symplectic, it follows that for $i>0$ one has $$R^iq_*\mathcal{O}_M=R^i{q^c_*}\mathcal{O}_{M^c}=0.$$ As $M^0$ is the affinization of $M$ and $M^c$, we conclude that \begin{align}\label{k0} q_*[\mathcal{O}_M]={q^c_*}[\mathcal{O}_{M^c}]=\mathcal{O}_{M^0}\in K^{\mathbb{T}}_0(M^0).\end{align}

\subsection{Equivariant mixed Hodge modules}
To prove Theorem \ref{thm:stcostintro}, we imitate the argument of Section \ref{sec:warmup} in a suitable category. We consider the category of mixed Hodge modules, which is equipped with functors that can recover sheaves of differential forms from the trivial bundle. We thank Davesh Maulik for suggesting this approach, as well as Ben Davison and Lucien Hennecart for patiently sharing their expertise in the subject. 

For a smooth 
variety $X$ 
equipped with a $\mathbb{T}$-action, let $\MHM^{\T}(X)$ denote the category of $\mathbb{T}$-equivariant mixed Hodge modules.
A mixed Hodge module on $X$ consists of (1) a left $\mathscr{D}_X$-module $\mathscr{M}$, equipped with an increasing good filtration $F_{\bullet}$ compatible with the Bernstein filtration on $\mathscr{D}_X$, along with a (2) rational structure and (3) weight filtration, satisfying a list of compatiblities and conditions. To simplify the exposition, we omit notation for (2), (3) and these further conditions. In particular, the weight filtration in all our examples is straightforward; by \cite[(4.5.2)]{Sai} all mixed Hodge modules appearing in this section are pure. 
A $\TT$-equivariant mixed Hodge module consists of the above data, where all sheaves and morphisms are $\TT$-equivariant.

Let $D^{b,\T}_{\MHM}(X)$ denote Achar's equivariant derived category of mixed Hodge modules as constructed in \cite{A}. Introductions well-suited to our application may be found in \cite[Section 3]{DiMu}, \cite[Section 2]{MS}, and, for the non-equivariant case, in \cite[Section 2]{Fu}. We remark that $D^{b,\T}_{\MHM}(X)$ is in general different from the derived category of $\MHM^{\T}(X).$ Only the former carries the 6-functor formalism. However, both derived categories are triangulated categories equipped with bounded non-degenerate t-structures whose heart is $\MHM^{\T}(X)$. In particular there is a canonical isomorphism of K-groups $K_0(\MHM^\T(X))\cong K_0(D^{b,\T}_{\MHM}(X)).$ 

\subsubsection{}\label{subsec:MHMproperties} In Achar's construction, smoothness of $X$ is needed to ensure the existence of the 6-functor formalism. To associate a suitable category to the singular variety $M^0$, we use the following workaround explained in \cite[Section 2]{MS}. Given a normal and quasi-projective $\T$-variety $X'$, choose a $\T$-equivariant closed embedding $X'\hookrightarrow X$ into a smooth quasi-projective $\T$-variety $X$. The existence of such an $X$ and embedding follows from \cite[Theorem 1]{Sum} and \cite[Theorem 5.1.25]{CG}.

Then, let $\MHM^{\T}_{X'}(X)$ denote the abelian subcategory of $\MHM^\T(X)$ of equivariant mixed Hodge modules on $X$ whose support lies in $X'$. We summarize the relevant properties of $\MHM^{\T}(X)$ and $D^{b,\T}_{\MHM}(X).$ 

To a mixed Hodge module $\mathscr{M}\in \MHM^{\T}(X),$ one associates the de Rham complex $$\DR_X(\mathscr{M})=[0\to\mathscr{M}\to \Omega^{1}_X\otimes_{\O_X}\mathscr{M}  \to \cdots \to \Omega^{\dim(X)}_X\otimes_{\O_X}\mathscr{M}\to 0 ]$$ 
with nonzero terms in degrees $-n,\ldots,0$.  
This complex has a filtration $F_{\bullet}$ given by $$F_k \DR_X(\mathscr{M})=[0\to F_k\mathscr{M}\to\Omega^{1}_X \otimes_{\O_X}F_{k+1}\mathscr{M}  \to \cdots \to \Omega^{\dim(X)}_X\otimes_{\O_X}F_{k+\dim(X)}\mathscr{M}\to 0 ].$$ We let $Gr^{F}_k\DR_X(\mathscr{M})$ denote the associated graded complex $$[0\to Gr^F_k\mathscr{M}\to\Omega^{1}_X \otimes_{\O_X}Gr^F_{k+1}\mathscr{M}  \to \cdots \to \Omega^{\dim(X)}_X\otimes_{\O_X}Gr^F_{k+\dim(X)}\mathscr{M}\to 0 ]$$ 
of equivariant $\mathcal{O}_X$-modules with coherent cohomology.

For $k\in \Z$, there  is an induced transformation of Grothendieck groups $$\Gr^F_k\DR_X \colon K_0(\MHM^{\mathbb{T}}(X))\to K_0^\T(X).$$ We abbreviate this map by $\Gr^F_k$.

The maps $\Gr^F_k$ have the following properties.

\begin{enumerate}
\item\cite[p.~17]{MS} Let $X$ be a smooth quasiprojective $\T$-variety, $X'$ be any $\T$-variety and let $X'\hookrightarrow X$ be a $\T$-equivariant closed embedding. Given $[\mathscr{M}]\in K_0(\MHM^{\T}_{X'}(X))$ the image of $\Gr^F_{k}([\mathcal{M}])\in K^{\T}_0(X)$ belongs to the image of the pushforward map $$K_0^\T(X')\to K_0^\T(X).$$ 
\item \cite[Equation~(6)]{DiMu} Let $X$ be a smooth quasiprojective $\T$-variety and let $\mathcal{\Q}^{\vir}_X$ denote the constant Hodge module on $X$. The Hodge module $\mathcal{\Q}^{\vir}_X$ is the pure Hodge module associated to the $\mathscr{D}_X$-module $\mathcal{O}_X$ and perverse sheaf $\Q_X[\dim X]$ equipped with trivial filtrations. To be precise, the filtration $F$ on $\mathcal{O}_X$ is given by $F_{k}\mathcal{O}_X=\mathcal{O}_X$ when $k\geq 0$ and $F_{k}\mathcal{O}_X=0$ otherwise.

Then, for $0\leq k\leq \dim(X)$, one has $\Gr^F_{-k}(\mathcal{\Q}^{\vir}_X)=\Omega^{k}_X[-k].$ In particular, on the level of Grothendieck groups one has $$\Gr^F_{-k}([\mathcal{\Q}^{\vir}_X])=(-1)^k [\Omega^k_X]\in K^{\T}_0(X).$$

\item \cite[Proposition~2.4]{MS}, \cite[Section~3]{DiMu} Let $f\colon X\to Y$ be a proper morphism of smooth $\T$-varieties. Then for any $k\in \Z$, the following diagram commutes $$\xymatrix{ K_0(\MHM^\T(X))\ar[r]^{f_*}\ar[d]^{\Gr^{F}_{k} } &K_0(\MHM^\T(Y))\ar[d]^{\Gr^{F}_{k} }  \\ K^\T_0(X) \ar[r]^{f_*}  & K^\T_0(Y).}$$ We record the following generalization to handle the case when the target is singular. Let $f\colon X\to Y'$ be a morphism of $\T$-varieties such that $X$ is smooth, and let $Y'\hookrightarrow Y$ be a $\T$-equivariant closed embedding of $Y'$ in a smooth $\T$-variety $Y$. Then there is a commutative diagram \begin{equation}\label{MHMsquare} \xymatrix{ K_0(\MHM^\T(X))\ar[r]^{f_*}\ar[d]^{\Gr^{F}_{k} } &K_0(\MHM^{\T}_{Y'}(Y))\ar[d]^{\Gr^{F}_{k} }  \\ K^\T_0(X) \ar[r]^{f_*}  & K^\T_0(Y').}\end{equation}
In particular, the composition $\Gr^F_k\circ f_*$ is independent of the choice of embedding $Y'\hookrightarrow Y.$

\end{enumerate}

\subsection{Proof of Theorem \ref{thm:stcostintro}}\label{sec:MHMproof}
Let $q \colon M\to M^0$ and $q^c \colon M^c\to M^0$ denote the semisimplification maps.\footnote{For general smooth quiver varieties, these maps may not coincide with the affinization maps from Section \ref{sec:warmup}.} The maps $q$ and $q^c$ are $\mathbb{T}$-equivariant. It suffices to show that $$q_*[\Omega^{k}_M]=q^c_*[\Omega^{k}_{M^c}]\in K^{\T}_0(M^0)$$ for all $k\geq 0$.

Fix a $\T$-equivariant closed embedding $M^0\hookrightarrow V$ of the affine variety $M^0$ in a $\T$-equivariant smooth quasi-projective variety $V$ and let $0\leq k\leq 2rn$. By (\ref{MHMsquare}), there are commutative diagrams \begin{equation}\label{MHMNak} \xymatrix{ K_0(\MHM^\T(M))\ar[r]^{q_*}\ar[d]^{\Gr^{F}_{-k} } &K_0(\MHM^{\T}_{M^0}(V))\ar[d]^{\Gr^{F}_{-k}} &K_0(\MHM^\T(M^c))\ar[l]_{q^c_*}\ar[d]^{\Gr^{F}_{-k} }    \\ K^\T_0(M) \ar[r]^{q_*}  & K^\T_0(M^0) & \ar[l]_{q^c_*} K^\T_0(M^c).}\end{equation}

We trace $\Q_M^{\vir}\in \MHM^{\T}(M)$ and $\Q_{M^c}^{\vir}\in \MHM^{\T}(M^c)$ through the diagram. In one direction, we find $$q_*(\Gr^F_{-k} [\Q^{\vir}_M])= (-1)^p q_*[\Omega^{k}_M],\ \ q^c_*(\Gr^F_{-k} [\Q^{\vir}_{M^c}])= (-1)^p q^c_*[\Omega^{k}_{M^c}].$$

In the other direction, we explain in Section \ref{fineprint} how the proof of \cite[Proposition 10.3]{DHSM} implies that \begin{align}\label{dhsm}q_*\Q^{\vir}_M=\mathcal{BPS}_{\widehat{A}_0}\otimes\mathbb{L}^{rn}=q^c_*\Q^{\vir}_{M^c}\in \MHM^{\T}_{M^0}(V),\end{align} where $\mathcal{BPS}_{\widehat{A}_0}$ is an explicit mixed Hodge module associated to the preprojective algebra for the Jordan quiver defined in \cite[Definition 7.15]{DHSM}, and $\mathbb{L}$ is the Tate twist $H^*_c(\mathbb{A}^1,\Q)$.

We obtain a chain of equalities $$ q_*(\Gr^F_{-k} [\Q^{\vir}_M])=\Gr^F_{-k} q_* [\Q^{\vir}_M]=\Gr^F_{-k} q^c_* [\Q^{\vir}_{M^c}]=q^c_*(\Gr^F_{-k} [\Q^{\vir}_{M^c}]),$$ proving Theorem \ref{thm:stcostintro}.

\begin{remark} The equality (\ref{dhsm}) can be compared to (\ref{k0}). Note that the maps $q$ and $q^c$ are semi-small but not small in general. Given a small resolution $\rho \colon X\to Y$, by \cite[Corollary 5.6]{CDK} 
one has \begin{align}\label{small} \rho_* \Q^{\vir}_{X}=\mathcal{IC}_Y,\end{align} where $\mathcal{IC}$ denotes the intersection complex. 
In this sense, the equality (\ref{dhsm}) may be regarded as a replacement of (\ref{small}) valid for the semi-small resolutions $q$ and $q^c$. We expect that that there is a proof of \eqref{dhsm} using the one-parameter deformations of the quiver varieties $M$ and $M^c$.  These deformations are small resolutions of their 
semisimplifications; see for example \cite[Section 5.1]{N}. 
\end{remark}

\subsubsection{Fine print}\label{fineprint}
The computation used to prove \cite[Proposition 10.3]{DHSM} obtains the equality (\ref{dhsm}) as a composition of three equalities obtained from \cite[Propositions 8.4, 10.1, 10.2]{DHSM}. This argument as formulated in \cite{DHSM} does not immediately imply the equality (\ref{dhsm}) for two reasons: the result is stated only on the level of perverse sheaves, and stated only for one choice of stability condition on a framed quiver corresponding to $M$. In this section, we explain why the isomorphisms used in the proof of the proposition lift to the setting of $\mathbb{T}$-equivariant mixed Hodge modules and also hold for any generic choice of King stability condition.

First, the morphism used to prove \cite[Proposition 10.2]{DHSM} is already constructed on the level of mixed Hodge modules in \cite[Proposition 6.3]{D}. The argument of \cite[Proposition 6.3]{D} also also 
holds for the opposite choice $(0,\ldots,0,-1)$ of stability condition, and more generally for any choice of generic King stability condition. In particular, the relevant isomorphism of perverse sheaves is constructed in \cite[Theorem 6.4]{D2} for any King stability condition.

Second, the isomorphisms \cite[Lemmas 4.7, 4.10]{T} used in the equality \cite[Proposition 10.1]{DHSM} are constructed using the six-functor formalism, the vanishing cycles functor, and cohomology with respect to the perverse t-structure on the category of perverse sheaves. Each of these functors lifts to the derived category of mixed Hodge modules. We remark that \cite[Proposition 10.1]{DHSM} is also applied on the level of mixed Hodge modules in the proof of \cite[Proposition 4.5]{D}. Moreover, \cite[Lemmas 4.7, 4.10]{T} hold for any stability condition.

Finally, the two morphisms used to produce the morphism of \cite[Proposition 8.4]{DHSM} are constructed in \cite[Theorem 1.1]{DHSM} and \cite[Formula 65, Section 7]{D} on the level of mixed Hodge modules.

We conclude that the isomorphisms of perverse sheaves in \cite[Proposition 10.3]{DHSM} lift to morphisms of mixed Hodge modules, and that such an isomorphism also exists for the stability condition corresponding to $M^c$. As the rationalization functor taking a mixed Hodge module to its underlying perverse sheaf is conservative (see, for example, the discusion before \cite[Theorem 0.1]{Sai}), it follows that these morphisms of mixed Hodge modules must be isomorphisms.

Moreover, by \cite[Theorem 5.2]{A}, the forgetful functor from the category of  $\mathbb{T}$-equivariant mixed Hodge modules to the non-equivariant category is fully faithful, so the isomorphisms of \cite[Proposition 10.3]{DHSM} must also be isomorphisms in the equivariant category. See also the discussion in \cite[Section 11.5-6]{DHSM}, which explains how the  relevant maps and constructions appearing in \cite[Proposition 10.3]{DHSM} can be made $\T$-equivariant.

\vspace{.4in}

\noindent N. Arbesfeld, \texttt{noah.arbesfeld@univie.ac.at},\\
University of Vienna, Faculty of Mathematics \\

\noindent M. Kool, \texttt{m.kool1@uu.nl},\\
Utrecht University, Mathematical Institute \\

\noindent T. Laarakker, \texttt{tieslaarakker@gmail.com}
\end{document}